\renewcommand{\tilde}{\widetilde}
\renewcommand{\hat}{\widehat}
\renewcommand{\bar}{\overline}
\theoremstyle{definition}
\newtheorem{definition}{Definition}[section]
\newtheorem{remark}[definition]{Remark}
\newtheorem{assumption}[definition]{Assumption}
\theoremstyle{plain}
\newtheorem{lemma}[definition]{Lemma}
\newtheorem{thm}[definition]{Theorem}
\numberwithin{equation}{section}
\newcommand\R{\mathbb{R}}
\newcommand\Ru{{\mathbb{R} \cup \left\{\infty\right\}}}
\newcommand\Nzero{\mathbb{N}_0}
\newcommand\Rd{{\mathbb{R}^d}}
\newcommand\ninN{{n\in\mathbb{N}}}
\newcommand\kinN{{k\in\mathbb{N}}}
\newcommand\ntoinf{{n\rightarrow \infty}}
\newcommand\ktoinf{{k\rightarrow \infty}}
\newcommand\dar{{\ \stackrel{\bd}{\rightarrow}} \ }
\newcommand\sar{{ \ \stackrel{\bs}{\rightharpoonup}\ }  }
\newcommand\rk{\rho^{k}_{\btau}}
\newcommand\rkm{\rho^{k-1}_{\btau}}
\newcommand\uk{u^k_{\btau}}
\newcommand\ukm{u^{k-1}_{\btau}}
\newcommand\ukp{u^{k+1}_{\btau}}
\newcommand\ukn{u^N_{\btau}}
\newcommand\ukw{u^k_{\btau,\omega}}
\newcommand\ukmw{u^{k-1}_{\btau,\omega}}
\newcommand\uknw{u^N_{\btau,\omega}}
\newcommand\tk{{t^k_\btau}}
\newcommand\tkp{{t^{k+1}_\btau}}
\newcommand\dk{\bd^2(\uk,\ukm)}
\newcommand\dks{\bd^2(u_*,\uk)}
\newcommand\dksm{\bd^2(u_*, \ukm)}
\newcommand\dksn{\bd^2(u_*,\ukn)}
\newcommand\dksz{\bd^2(u_*,u_0)}
\newcommand\ek{\mathcal{E}(\uk)}
\newcommand\ekm{\mathcal{E}(\ukm)}
\newcommand\ekn{\mathcal{E}(\ukn)}
\newcommand\eknw{\mathcal{E}(\uknw)}
\newcommand\pk{\mathcal{P}_{ t^k_\btau }(\uk)}
\newcommand\pkm{\mathcal{P}_{ t^k_\btau}(\ukm)}
\newcommand\pkw{\mathcal{P}_{ \omega t^k_\btau }(\ukw)}
\newcommand\pkmw{\mathcal{P}_{ \omega t^k_\btau}(\ukmw)}
\newcommand\dkw{\bd^2(\ukw,\ukmw)}
\newcommand\Pzwei{{{\mathscr{P}}_2 \left( \Rd \right)}}
\newcommand\Paz{{{\mathscr{P}}_{2,ac} \left( \Rd \right)}}
\newcommand\Lzwei{{{L^2}\left(\Rd\right)}}
\newcommand\Km{{\mathcal{K}_m}}
\newcommand{\btau}{{\bm{\tau}}}
\newcommand{\W}{\mathbf{W}_2}
\newcommand{\M}{\mathbf{M}_2}
\newcommand{\tW}{{\mathcal{W}}}
\newcommand{\N}{{\mathbb{N}}}
\newcommand{\bp}{{\bm{p}}}
\newcommand{\bd}{{\bm{d}}}
\newcommand{\bs}{{\bm{\sigma}}}
\newcommand{\X}{{\bm{X}}}
\newcommand{\dr}{{\ \mathrm{d} r }}
\newcommand{\dt}{{\ \mathrm{d} t }}
\newcommand{\dx}{{\ \mathrm{d} x }}
\newcommand{\dy}{{\ \mathrm{d} y }}
\newcommand{\dxy}{{\ \mathrm{d} x \mathrm{d} y }}
\newcommand{\ds}{{\ \mathrm{d} \sigma }}
\newcommand{\dss}{{\ \mathrm{d} s }}
\newcommand{\dm}{{\ \mathrm{d} \mu}}
\newcommand{\dd}{\,\mathrm{d}}
\newcommand{\ent}{\mathcal{E}}
\begin{document}


\begin{abstract}
We study the high-frequency limit of non-autonomous gradient flows in metric spaces of energy functionals comprising an explicitly time-dependent perturbation term which might oscillate in a rapid way, but fulfills a certain Lipschitz condition. On grounds of the existence results by Ferreira and Guevara (2015) on non-autonomous gradient flows (which we also extend to the framework of geodesically non-convex energies), we prove that the associated solution curves converge to a solution of the time-averaged evolution equation in the limit of infinite frequency. Under the additional assumption of dynamical geodesic $\lambda$-convexity of the energy, we obtain an explicit rate of convergence. In the non-convex case, we specifically investigate nonlinear drift-diffusion equations with time-dependent drift which are gradient flows with respect to the $L^2$-Wasserstein distance. We prove that a family of weak solutions obtained as a limit of the Minimizing Movement scheme exhibits the above-mentioned behaviour in the high-frequency limit.
\end{abstract}


\title[High-frequency limit]{High-frequency limit of non-autonomous gradient flows}
\author[Simon Plazotta]{Simon Plazotta}
\author[Jonathan Zinsl]{Jonathan Zinsl}
\address{Zentrum f\"ur Mathematik \\ Technische Universit\"at M\"unchen \\ 85747 Garching, Germany}
\email{plazotta@ma.tum.de}
\email{zinsl@ma.tum.de}
\keywords{Gradient flow, Wasserstein metric, minimizing movement, non-autonomous problem, rapid oscillations}
\thanks{This research has been supported by the German Research Foundation (DFG), SFB TRR 109. The authors would like to thank Daniel Matthes for helpful discussions and remarks.}
\date{\today}
\subjclass[2010]{Primary: 35G25; Secondary: 35K45, 35A15, 35D30, 37B55}
\maketitle


\section{Introduction}\label{sec:intro}
In this work, we study \emph{non-autonomous evolution problems} of, for instance, the form of a \emph{nonlinear Fokker-Planck equation}:
\begin{align}
		\partial _t \rho (t,x)  &= \Delta_x \rho^m (t,x) + \mathrm{div}_x \left[\rho(t,x) \nabla_x (W_{\omega t}(x,\cdot) \ast \rho(t,\cdot)) \right], \label{FPeq} \\
\rho(0,x)&=\rho_0(x),\label{eq:icond}
\end{align}
where the sought-for solution $\rho:[0,\infty)\times\Rd\to [0,\infty]$ is nonnegative and preserves the initial mass. The key element of equation \eqref{FPeq} is the explicit time-dependence of the right-hand side via the potential $W_{(\cdot)}:[0,\infty)\times\Rd\times\Rd\to \R$ which comprises both confinement and interaction potentials. Above, $m\ge 1$ denotes the exponent of (possibly nonlinear) diffusion and $\omega>0$ is a parameter which reflects the frequency of oscillation of $W$, as $W$ is assumed to be $1$-periodic in its first argument. Our main interest lies in the behaviour of \eqref{FPeq} when $\omega\to\infty$: in a nutshell, we prove that the family of solutions $(\rho_\omega)_{\omega>0}$ converges to a solution $\rho$ of the limit problem
\begin{align}
		\partial _t \rho (t,x)  &= \Delta_x \rho^m (t,x) + \mathrm{div}_x \left[\rho(t,x) \nabla_x (\bar W(x,\cdot) \ast \rho(t,\cdot)) \right], \label{limFPeq}\\
\rho(0,x)&=\rho_0(x),\label{eq:limicond}
\end{align}
where the \emph{time-independent} limit potential $\bar W:\R^d\times\Rd\to\R$ is given as the time-average
\begin{align}\label{eq:limpot}
\bar W(x,y):=\int_0^1 W_t(x,y)\dd t,\qquad\text{at each }x,y\in\Rd.
\end{align}
We assume the following:
\begin{assumption}[Properties of $W$]\label{DefRegWintro}~
		\begin{enumerate}[({W}1)]
			\item  $W \in \mathcal{C}^{(1,2,2)} \left( \left[0,\infty\right) \times \Rd \times \Rd \right) $ is symmetric w.r.t. its second and third argument, and $1$-periodic in its first argument.
			\item  There exists non-negative constants $d_1, d_2 $ such that 
							\begin{align*}
			\left| W_t(x,y) \right| \leq  d_1 \left( 1 + \left\|x\right\|^2 + \left\|y\right\|^2 \right), \qquad \left| \partial_t W_t(x,y) \right| \leq d_2 \left( 1 + \left\|x\right\|^2 + \left\|y\right\|^2 \right).
			\end{align*}
			\item There exists a suitable non-negative function $\alpha \in  L^1_{\mathrm{loc}} \left( \left[0,\infty \right) \right)$ such that
			\begin{align*}
			\left| W_t(x,y) - W_s(x,y) \right| \leq \int_s^t \alpha(r) \dr   \left( 1 + \left\|x\right\|^2 + \left\|y\right\|^2 \right).
			\end{align*}
				\item For all $T>0$ and $\Omega \Subset \Rd$ (i.e., $\Omega$ is compactly contained in $\Rd$) there exists a non-negative constant $d_3$, an exponent $r<2$, and a non-negative function $\tilde{\alpha}\in L^1_\mathrm{loc}\left(\left[0,\infty \right) \right)$ such that for all $y\in\Rd, x\in \Omega, s \leq t\in \left[0,T\right]$: 
					\begin{align*}
					\left| \nabla_x W_t(x,y) - \nabla_x W_s(x,y) \right|  \leq \int_s^t \tilde{\alpha} (r) \dr   \left( 1 + \left\|y\right\|^2\right),		\qquad			\left| \nabla_x W_t(x,y) \right| &\leq  d_3 \left( 1 + \left\|y\right\|^{r} \right).
					\end{align*}	
					\item There exists a non-negative constant $d_4$ such that
				\begin{align*}
					\left| \Delta_x W_t(x,y) \right| \leq d_4(1+ \|x\|^2+\|y\|^2 ).
				\end{align*}
\item There exists a non-negative constant $L$ \emph{independent of $t$} such that for all $x,\tilde x,y,\tilde y \in \Rd$:
				\begin{align*}
					\left| ( W_t(x,y) - \overline{W}(x,y) ) - ( W_t(\tilde x,\tilde y) - \overline{W}(\tilde x,\tilde y) ) \right| \leq L \left( \left\|x-\tilde x\right\| +\left\|y-\tilde y\right\| \right).
				\end{align*}
		\end{enumerate}
\end{assumption}
We prove the existence of weak solutions to system \eqref{FPeq} using its formal gradient flow structure with respect to the $L^2$-Wasserstein distance $\W$ on the space $\Pzwei$ of probability measures on $\Rd$ with finite second moment, of the (time-dependent) energy functional
\begin{align}\label{eq:energy}
\mathcal{F}_{t,\omega}(\rho):=\begin{cases}\int_{\Rd}\left[\rho\log(\rho)+\frac12\rho (W_{\omega t}\ast\rho)\right]\dd x&\text{if }m=1,\\
\int_{\Rd}\left[\frac1{m-1}\rho^m+\frac12\rho (W_{\omega t}\ast\rho)\right]\dd x&\text{if }m>1,\end{cases}
\end{align}
provided that the integrals on the r.h.s. are well-defined (otherwise, set $\mathcal{F}_{t,\omega}(\rho):=+\infty$). Note that $\mathcal{F}$ does not possess convexity properties along geodesics in the space $\Pzwei$ \cite{mccann1997}, so the results on contractive gradient flows by Ambrosio, Gigli and Savaré \cite{ags} (in the autonomous case) and Ferreira and Guevara \cite{fg} (in the non-autonomous case) are not immediately applicable.

Still, the cornerstone of our proof is the so-called \emph{Minimizing Movement scheme} \cite{jko} which is by now an almost classical tool to construct weak solutions to equations with gradient flow structure: define, with a given sequence of step sizes $\btau=(\tau_1,\tau_2,\ldots)$, a sequence $(\rho_{\btau,\omega}^k)_{k\in\N}$ recursively by
\begin{align}
			\rho_{\btau,\omega}^k \in \underset{v\in\Pzwei}{\mathrm{argmin}} \left[\frac{1}{2\tau_k}\W^2(\rho^{k-1}_\btau,v) + \mathcal{F}_{t^k_\btau,\omega}(v)\right],\qquad \mathrm{with} \qquad \rho_{\btau,\omega}^0 = \rho_0, \label{MMs}
		\end{align}
and $t^k_\btau=\sum_{l=1}^k \tau_k$ for $k\ge 1$. Further, define the corresponding piecewise constant interpolation $\bar{\rho}_{\btau,\omega}: \left[0,\infty \right)  \rightarrow \Pzwei$ via
	\begin{align}
		\bar{\rho}_{\btau,\omega} (t) = \rho_{\btau,\omega}^k \qquad \mathrm{for} \ t\in \left[ t^k_\btau, t^{k+1}_\btau  \right) \quad \mathrm{and} \ k \in \Nzero. \label{PCInterpolation}
	\end{align}
Our result concerning the limit behaviour as $\btau\to 0$ and $\omega\to\infty$ reads as follows:
\begin{thm}[Existence and high-frequency limit for the nonlinear Fokker-Planck equation]\label{thm:fpe}
Assume that $m\ge 1$ and that $W$ satisfies Assumption \ref{DefRegWintro} and let an initial datum $\rho_0\in \Pzwei$ with $\mathcal{F}_{0,1}(\rho_0)<\infty$ be given.
\begin{enumerate}[(a)]
\item Let $\omega>0$ be fixed. Then, the scheme \eqref{MMs} is well-defined and the following statements hold:
\begin{enumerate}[(i)]
\item For each $T>0$ and each sequence of vanishing step sizes $\btau_n\to 0$, there exists a (non-relabelled) subsequence and a map $\rho_\omega:[0,\infty)\times\Rd\to [0,\infty]$ such that the family of discrete solutions $(\bar{\rho}_{\btau_n,\omega})_{n\in\N}$ converges to $\rho_\omega$ in the following sense:
\begin{align*}
&\bar{\rho}_{\btau_n, \omega}(t,\cdot)\rightharpoonup\rho_\omega(t,\cdot)\text{ narrowly in the space }\Pzwei\text{ at each fixed }t\ge 0,\\
&\bar{\rho}_{\btau_n, \omega}\rightarrow\rho_\omega\text{ in }L^m(0,T;L^m(\Omega))\text{ for every compact set }\Omega\Subset\Rd.
\end{align*}
\item $\rho_\omega$ is a solution to \eqref{FPeq} with the respective $\omega>0$ in the sense of distributions and it has the following properties:
\begin{align*}
\sup_{\omega>0}\left[\|\rho_\omega\|_{\mathcal{C}^{1/2}([0,T],\Pzwei)}+\|\rho_\omega^{m/2}\|_{L^2(0,T;H^1(\Rd))}\right]\le C,
\end{align*}
where the constant $C>0$ depends only on $T$ and $\rho_0$, \emph{but not on $\omega$}.
\end{enumerate}
\item Consider now a sequence $(\omega_n)_{n\in\N}$ with $\omega_n\to\infty$. For every $T>0$, there exists a (non-relabelled) subsequence and a map $\rho_\infty:[0,\infty)\times\Rd\to [0,\infty]$ such that the family $(\rho_{\omega_n})_{n\in\N}$ of weak solutions to \eqref{FPeq} obtained from part (a) converges to $\rho_\infty$,
\begin{align*}
&\rho_{{\omega_n}}(t,\cdot)\rightharpoonup\rho_\infty(t,\cdot)\text{ narrowly in the space }\Pzwei\text{ at each fixed }t\ge 0,\\
&\rho_{{\omega_n}}\rightarrow\rho_\infty\text{ in }L^m(0,T;L^m(\Omega))\text{ for every compact set }\Omega\subset\Rd,
\end{align*}
and $\rho_\infty$ is a solution to the limit problem \eqref{limFPeq} in the sense of distributions.
\end{enumerate}
\end{thm}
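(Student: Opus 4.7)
I would follow the classical Minimizing Movement programme in $(\Pzwei,\W)$, adapted to the non-autonomous setting of Ferreira--Guevara and extended to the non-geodesically-convex energy \eqref{eq:energy} as announced in the abstract. Existence of a minimizer in \eqref{MMs} at each step follows from the direct method: $\mathcal{F}_{t,\omega}$ is narrowly lower semicontinuous on $\Pzwei$ with coercive sublevels, thanks to the $t$-uniform quadratic growth (W2). The standard a priori estimates---uniform bound on $\mom(\rk)$, the discrete energy--dissipation inequality $\sum_k \W^2(\rk,\rkm)/\tau_k\le C$, and the $L^2_tH^1_x$ bound on $\rho_\omega^{m/2}$ obtained by flow-interchange with the heat semigroup---only invoke the $t$-uniform bounds (W2) and (W5), hence are uniform in $\omega$. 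A refined Aubin--Lions-type compactness delivers (a)(i), and the discrete Euler--Lagrange relation passes to the weak formulation of \eqref{FPeq} as $\btau\to 0$ using (W4) to control the drift, giving (a)(ii).

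\textbf{Part (b): setting up the limit.} Because every bound above is uniform in $\omega$, the same compactness extracts a subsequence $\rho_{\omega_n}\to\rho_\infty$ in the claimed modes of convergence. It remains to pass the weak form of \eqref{FPeq} to the limit. The diffusion term converges by strong $L^m_{\mathrm{loc}}$-convergence; for the drift I split
\begin{equation*}
\int_0^T\!\!\!\int \rho_{\omega_n}\,\nabla\phi\cdot(\nabla_x W_{\omega_n t}\ast\rho_{\omega_n})\,dx\,dt \;=\; D_n^{(\bar W)} + D_n^{\mathrm{osc}},
\end{equation*}
where $D_n^{(\bar W)}$ replaces $W_{\omega_n t}$ by $\bar W$. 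The ``averaged'' piece $D_n^{(\bar W)}$ converges to the desired limit by narrow + $L^m_{\mathrm{loc}}$-convergence of $\rho_{\omega_n}$ combined with the fixed, controlled kernel $\nabla_x\bar W$, whose growth is again governed by (W4).

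\textbf{Oscillation averaging.} The crux of the proof is $D_n^{\mathrm{osc}}\to 0$. I would introduce the primitive
\begin{equation*}
\Psi(s,x,y) := \int_0^s\bigl(\nabla_x W_r(x,y)-\nabla_x\bar W(x,y)\bigr)\,dr,
\end{equation*}
which by definition of $\bar W$ has zero mean over a period and is therefore $1$-periodic in $s$; by (W4) it satisfies $|\Psi(s,x,y)|\le C(1+\|y\|^r)$ with $r<2$ uniformly in $s\in\R$. Since $\nabla_x W_{\omega_n t}-\nabla_x\bar W=\omega_n^{-1}\partial_t\Psi(\omega_n t,\cdot,\cdot)$, an integration by parts in $t$ against the compactly supported test function $\phi$ turns $D_n^{\mathrm{osc}}$ into $\omega_n^{-1}$ times a term containing $\partial_t[\rho_{\omega_n}(t,x)\rho_{\omega_n}(t,y)\nabla\phi]$ tested against the bounded kernel $\Psi(\omega_n t,x,y)$; the $\partial_t\rho_{\omega_n}$ pieces are handled by re-inserting \eqref{FPeq} itself and exploiting the $\omega$-uniform bounds of (a) together with (W4), yielding $D_n^{\mathrm{osc}}=O(\omega_n^{-1})\to 0$. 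A more hands-on variant partitions $[0,T]$ at a scale $\delta\gg\omega_n^{-1}$, freezes $\rho_{\omega_n}(t)$ to $\rho_{\omega_n}(j\delta)$ on each subinterval, and controls the freezing error via Kantorovich duality using the $t$-uniform Lipschitz bound (W6) paired with the $\mathcal{C}^{1/2}$-regularity of $\rho_{\omega_n}$ in $\W$.

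\textbf{Main obstacle.} The central difficulty is that every estimate must be uniform in $\omega$: any naive chain-rule manipulation of $\mathcal{F}_{t,\omega}$ would introduce a factor $\omega$ via (W3) and instantly destroy the limit procedure. The workaround is to rely exclusively on the $t$-uniform growth bounds (W2)--(W5) throughout the energy--dissipation analysis. For the high-frequency limit itself, the two essential structural ingredients are the $1$-periodicity of $W$ in $t$ (which keeps the primitive $\Psi$ bounded as $\omega\to\infty$ rather than growing linearly in $s$) and the $t$-uniform spatial Lipschitz condition (W6), which delivers the sharp pairing of spatial oscillations with the Wasserstein equicontinuity of $\rho_{\omega_n}$; without (W6), the freezing error in the averaging step could only be controlled by an $\omega$-dependent modulus of continuity and the argument would fail.
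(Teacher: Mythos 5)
Your part (a) follows the paper's route (Minimizing Movements, flow interchange for the $H^1$-bound on $\rho^{m/2}$, approximate Euler--Lagrange equations, refined Aubin--Lions), so that portion is fine. The genuine gap is in your justification of the $\omega$-uniform a priori estimates, which you claim ``only invoke the $t$-uniform bounds (W2) and (W5), hence are uniform in $\omega$.'' This is not true. The discrete energy--dissipation estimate requires controlling
\begin{equation*}
\sum_{k=1}^{N}\Bigl(\mathcal{W}_{\omega t^k_{\btau}}(\rho^{k-1}_{\btau,\omega})-\mathcal{W}_{\omega t^k_{\btau}}(\rho^{k}_{\btau,\omega})\Bigr),
\end{equation*}
which does \emph{not} telescope because the potential is evaluated at shifting times. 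Growth bounds such as (W2) give only $O(1)$ per summand, i.e.\ $O(T/\tau)$ in total, and the standard repair via the modulus $\alpha$ in (W3)/(P3) produces, after the rescaling $t\mapsto\omega t$, a constant proportional to $\int_0^{\omega T}\alpha(r)\,\mathrm{d}r$, which blows up as $\omega\to\infty$ --- this is exactly the factor $\omega$ you warn against, and (W2)--(W5) do not circumvent it. The paper's mechanism (Lemma \ref{BoundsMMsHF}) is to use (W6)/(P5): write $\mathcal{P}_{\omega t^k}(u^{k-1})-\mathcal{P}_{\omega t^k}(u^{k})\le L\,\bd(u^{k},u^{k-1})+\overline{\mathcal{P}}(u^{k-1})-\overline{\mathcal{P}}(u^{k})$, telescope the time-independent averaged part, and absorb the Lipschitz term by Young's inequality into the dissipation. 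You do list (W6) as essential, but you deploy it only in the averaging/freezing step of part (b); without it already at the level of the energy--dissipation inequality, the compactness underlying both (a)(ii) (the $\omega$-independent constant) and (b) is not available.

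For the passage to the limit in the drift term, your decomposition into $D_n^{(\bar W)}+D_n^{\mathrm{osc}}$ with the periodic primitive $\Psi$ and an integration by parts in $t$ is a legitimate, more quantitative alternative to what the paper does; the paper instead argues softly that $\nabla_x W_{\omega_n t}(x,y)\rightharpoonup^{*}\nabla_x\overline{W}(x,y)$ in $L^{\infty}(0,T)$ and pairs this weak-$*$ convergence with the strong $L^1$-convergence of $\rho_{\omega_n}(t,x)\rho_{\omega_n}(t,y)$ on compact sets, handling tails via (W5) and the uniform second moments. If you pursue your route, be careful that after integrating by parts the term involving $\partial_t\rho_{\omega_n}$ is only distributional, and re-inserting the equation requires the ($y$-dependent, nonlocal) function $\Psi(\omega_n t,\cdot,y)\nabla\phi$ to be admissible in the weak formulation; the paper's argument avoids this entirely. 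Your ``freezing at scale $\delta$'' variant is essentially the unit-period decomposition the paper uses in the abstract $\lambda$-convex case (Theorem \ref{thm:abstractgf}(b)), where the Lipschitz condition is indeed paired with the $AC^2$-regularity of the curves.
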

The specific setting of nonlinear drift-diffusion equations, seen as gradient flow w.r.t. the $L^2$-Wasserstein distance, is embedded into the more general framework of gradient flows in abstract metric spaces $(\X,\bd)$. There, one considers (time-dependent) energy functionals of the form
\begin{align*}
\mathcal{F}_{(\cdot)}:[0,\infty)\times \X\to \R\cup\{\infty\},\quad \mathcal{F}_t(u)=\ent(u)+\mathcal{P}_t(u),
\end{align*}
consisting of a time-independent part $\ent$ and a time-dependent perturbation $\mathcal{P}_{(\cdot)}$. In this direction, we extend the existence results on non-autonomous gradient flows by Ferreira and Guevara \cite{fg} to energy functionals which are not (semi-)convex along geodesics in the respective metric space. Subsequently, we again consider periodic time-dependent perturbation potentials and study the high-frequency limit of the associated family of curves of steepest descent associated to the gradient system. Our main results read as follows:
\begin{thm}[Curves of steepest descent and high-frequency limit for abstract gradient flows]\label{thm:abstractgf}
Assume that $\ent$ and $\mathcal{P}_t$ fulfill the Assumptions \ref{DefRegE}, \ref{DefRegP} and \ref{DefChainRule}, respectively (see Section \ref{ssec:topolog}). Let an initial datum $u_0\in\X$ with $\mathcal{F}_0(u_0)<\infty$ be given. The following statements hold:
\begin{enumerate}[(a)]
\item There exists a curve $u\in AC^2([0,\infty);\X)$ of steepest descent for $\mathcal{F}$ in $\X$, that is, the following energy balance holds for all $T>0$:
\begin{align*}
			 \mathcal{E} (u(T)) + \mathcal{P}_T (u(T)) + \frac{1}{2} \int_0^T |u' |^2 (t) \dt + \frac{1}{2} \int_0^T | \partial \left(\mathcal{E} + \mathcal{P}_t \right)|^2 (u(t)) \dt  =  \mathcal{E} (u_0) + \mathcal{P}_0 (u_0) + \int_0^T \partial_t \mathcal{P}_t (u(t)) \dt. 
		\end{align*}
\item Assume now in addition that $\mathcal{P}_{(\cdot)}$ is 1-periodic w.r.t. $t$ and that the following regularity conditions hold:
\begin{enumerate}[(1)]
\item $\mathcal{P}$ fulfills a Lipschitz condition in $\X$, uniformly in $t$: there exists $L\ge 0$ such that for all $u,v\in\X$ and all $t\ge 0$, one has, with $\bar{\mathcal{P}}(u):=\int_0^1 \mathcal{P}_t(u)\dd t$, that
$$|(\mathcal{P}_t(u)-\bar{\mathcal{P}}(u))-(\mathcal{P}_t(v)-\bar{\mathcal{P}}(v))|\le L\bd(u,v).$$
\item $\mathcal{P}$ satisfies a convexity condition along geodesics in $\X$: the \emph{Yosida penalization} of $\mathcal{F}$, i.e. the functional
$$\Phi(\tau,t,u;\cdot):=\frac1{2\tau}\bd^2(u,\cdot)+\mathcal{F}_t(\cdot),$$
is (dynamically) $\left(\frac1{\tau}+\lambda(t)\right)$-convex along geodesics in $\X$, for a function $\lambda:[0,\infty)\to\R$ which is bounded from below (see, e.g., \cite{fg}).
\end{enumerate}
Then, the family $(u_\omega)_{\omega>0}$ of curves of steepest descent associated to the family of functionals $\mathcal{F}_{t,\omega}:=\ent+\mathcal{P}_{\omega t}$ for $\omega>0$ from part (a) converges (as $\omega\to\infty$) to a curve $u_\infty$ of steepest descent for the limit functional $\mathcal{F}_\infty:=\ent+\bar{\mathcal{P}}$. Moreover, the following estimate on the convergence rate holds:
\begin{align*}
\bd(u_\omega(t),u_\infty(t))\le\frac{C}{\sqrt{\omega}},\qquad\text{for all }t\in[0,T],
\end{align*}
where the constant $C>0$ depends on $T>0$ and $u_0$. 
\end{enumerate}
\end{thm}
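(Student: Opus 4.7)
The plan is to prove part (a) via the Minimizing Movement scheme in the spirit of Ferreira--Guevara, adapted to the absence of geodesic convexity through the chain rule (Assumption~\ref{DefChainRule}), and to prove part (b) through an EVI-based comparison between the oscillating and averaged gradient flows, where the rapid oscillation is exploited by integration by parts on a time-primitive of $\mathcal{P}_t - \bar{\mathcal{P}}$.

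\textbf{Part (a).} Under Assumptions~\ref{DefRegE} and \ref{DefRegP}, existence of minimizers in the scheme analogous to \eqref{MMs} is standard (coercivity plus lower semicontinuity). Using $v = \ukm$ as a competitor yields the discrete energy--dissipation inequality
\begin{align*}
\mathcal{F}_{\tk}(\uk) + \frac{\bd^2(\uk,\ukm)}{2\tau_k} \le \mathcal{F}_{\tkm}(\ukm) + \int_{\tkm}^{\tk}\partial_s \mathcal{P}_s(\ukm)\dss,
\end{align*}
from which, after Gronwall control of $\sup_k \mathcal{F}_{\tk}(\uk)$, one extracts uniform bounds on the metric $2$-variation of the piecewise constant interpolant $\bar u_\btau$. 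A refined Arzel\`a--Ascoli argument then yields a limit curve $u \in AC^2([0,\infty);\X)$. The energy balance as an identity is obtained by combining the $\le$ direction (from lower semicontinuity of the metric slope along the De Giorgi variational interpolant) with the $\ge$ direction, which is precisely the chain rule in Assumption~\ref{DefChainRule}.

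\textbf{Part (b), setup of the comparison.} On the discrete level, the $(\tfrac{1}{\tau_k}+\lambda(\tk))$-geodesic convexity of the Yosida penalization, applied by varying its minimizer $\uk$ along geodesics, yields the discrete EVI
\begin{align*}
\frac{\bd^2(\ukm,\uk) - \bd^2(\ukm,v)}{2\tau_k} + \frac{1 + \tau_k \lambda(\tk)}{2}\, \bd^2(\uk, v) + \mathcal{F}_{\tk}(\uk) \le \mathcal{F}_{\tk}(v) \qquad \forall v \in \X.
\end{align*}
Apply this inequality to the oscillating scheme (with $\uk_{\btau,\omega}$ and $\mathcal{F}_{\tk,\omega}$, tested at $\uk_{\btau,\infty}$) and symmetrically to the averaged scheme (tested at $\uk_{\btau,\omega}$). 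Adding, absorbing the mixed cross terms through the triangle inequality, summing in $k$ and passing to the limit $\btau\to 0$ yields the continuum comparison
\begin{align*}
\bd^2(u_\omega(T), u_\infty(T)) + 2\int_0^T \lambda(t)\, \bd^2(u_\omega(t), u_\infty(t)) \dt \le 2 \int_0^T \bigl[h_\omega(t, u_\infty(t)) - h_\omega(t, u_\omega(t))\bigr] \dt,
\end{align*}
where $h_\omega(t,u) := \mathcal{P}_{\omega t}(u) - \bar{\mathcal{P}}(u)$.

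\textbf{Oscillatory estimate: the main obstacle.} The crux is to bound the right-hand side by $C/\omega$. Introduce the primitive
\begin{align*}
\tilde H_\omega(t,u) := \int_0^t h_\omega(s,u)\dss = \frac{1}{\omega}\, H(\omega t, u), \qquad H(r,u) := \int_0^r \bigl[\mathcal{P}_s(u) - \bar{\mathcal{P}}(u)\bigr] \dss.
\end{align*}
The $1$-periodicity of $\mathcal{P}$ together with the zero-mean property of $\mathcal{P}_s - \bar{\mathcal{P}}$ over one period makes $H(\cdot, u)$ itself $1$-periodic, hence uniformly bounded in its first argument; therefore $|\tilde H_\omega(t, u)| \le C(u)/\omega$. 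Moreover, Lipschitz hypothesis~(1) gives $|\tilde H_\omega(t, u) - \tilde H_\omega(t, v)| \le (L/\omega)\bd(u,v)$, uniformly in $t$. For $u \in AC^2([0,T];\X)$, a telescoping Riemann-sum argument on partitions $0 = t_0 < t_1 < \cdots < t_N = T$ — splitting $\tilde H_\omega(t_k, u(t_k)) - \tilde H_\omega(t_{k-1}, u(t_{k-1}))$ into a time-increment of $\tilde H_\omega$ at the fixed point $u(t_k)$ plus a spatial increment at fixed time $t_{k-1}$ — yields
\begin{align*}
\left| \int_0^T h_\omega(s, u(s))\dss \right| \le |\tilde H_\omega(T, u(T))| + \frac{L}{\omega}\int_0^T |u'|(s)\dss = \mathcal{O}(1/\omega),
\end{align*}
where $\int_0^T |u'| \le \sqrt{T}\,(\int_0^T |u'|^2)^{1/2}$ is controlled uniformly by the $AC^2$-bounds from part (a). Plugging this into the comparison inequality and invoking Gronwall (using that $\lambda$ is bounded from below) yields $\bd^2(u_\omega(t), u_\infty(t)) \le C/\omega$, hence the rate $\bd(u_\omega, u_\infty) \le C/\sqrt{\omega}$. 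The principal obstacle is the rigorous execution of this metric integration-by-parts — controlling the error between the Riemann sum and the integral using only Lipschitz regularity of $\tilde H_\omega$ in its second variable together with absolute continuity of $u$ — along with the preceding limit passage $\btau\to 0$ in the doubled discrete EVI, whose mixed cross terms must be carefully absorbed without losing the $\lambda(t)$-dissipation.
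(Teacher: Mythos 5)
Your part (a) follows the paper's route exactly (Minimizing Movement, classical estimates via Gronwall, refined Arzel\`a--Ascoli, De Giorgi interpolant for the $\le$ direction, chain rule for the $\ge$ direction), and your part (b) is also the same strategy in substance: an EVI-based comparison of $u_\omega$ and $u_\infty$, Gronwall, and an oscillatory estimate that exploits $1$-periodicity, the Lipschitz condition (1), and $\omega$-uniform $AC^2$ bounds. Two presentational differences are worth noting. First, the paper does not double a \emph{discrete} EVI and pass to the limit in $\btau$; it invokes the continuum evolution variational inequalities directly (from \cite[Eq.\ (5.15)]{fg} and \cite[Thm.\ 4.0.4]{ags}) and differentiates $t\mapsto\frac12\bd^2(u_\omega(t),u_\infty(t))$ via the doubling lemma \cite[Lemma 4.3.4]{ags}; this entirely avoids the limit passage you flag as your ``principal obstacle,'' and you could (and should) take the same shortcut. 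Second, your organization of the oscillatory estimate through the primitive $\tilde H_\omega(t,u)=\frac1\omega H(\omega t,u)$ and Abel summation is a clean repackaging of the paper's period-by-period decomposition with ``productive zeros''; applying Gronwall \emph{after} the unweighted oscillatory bound, as you do, even spares you the paper's extra Taylor expansion of the exponential weight. Both versions rest on the same two ingredients: boundedness of the periodic primitive and $|\tilde H_\omega(t,u)-\tilde H_\omega(t,v)|\le (L/\omega)\,\bd(u,v)$.

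There is, however, one genuine gap. You claim that $\int_0^T|u_\omega'|^2\dt$ is ``controlled uniformly by the $AC^2$-bounds from part (a).'' It is not: the constants in Theorem \ref{BoundsMMS} depend on $\int_0^T\alpha(r)\dr$, where $\alpha$ is the time-modulus of (P3) for the perturbation, and for $\mathcal{P}_{\omega t}$ this modulus is $\omega\,\alpha(\omega\,\cdot)$, whose integral over $[0,T]$ equals $\int_0^{\omega T}\alpha(r)\dr$ and blows up as $\omega\to\infty$. Since your final constant $C$ in the rate $C/\sqrt\omega$ is built from $\||u_\omega'|\|_{L^2(0,T)}$ and from $\sup_t\bd(u_*,u_\omega(t))$, the argument collapses unless these are shown to be $\omega$-independent. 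This is exactly the content of the paper's Lemma \ref{BoundsMMsHF}: one must rerun the telescoping estimate replacing the (P3)-increments $\mathcal{P}_{\omega t^{k}_{\btau}}(u^{k}) - \mathcal{P}_{\omega t^{k+1}_{\btau}}(u^{k})$ by the Lipschitz bound of hypothesis (1), $|\mathcal{P}_{\omega t}(u)-\mathcal{P}_{\omega t}(v)|\le L\,\bd(u,v)+|\bar{\mathcal{P}}(u)-\bar{\mathcal{P}}(v)|$, absorb the resulting $L\,\bd(u^k,u^{k-1})$ by Young's inequality into the dissipation, and then apply the discrete Gronwall lemma with $\omega$-free constants. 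You have the needed hypothesis available, so the gap is fillable, but as written the uniformity is asserted rather than proved, and it is the crux of the quantitative rate.
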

In analogy to the specific case of gradient flows in the space $\Pzwei$ endowed with the $L^2$-Wasserstein distance, the cornerstone of our proofs is the variational approach provided by the so-called \emph{Moreau-Yosida} approximation
\begin{align*}
\phi(\tau,t,u): =\inf_{v\in\X}\Phi(\tau,t,u;v).
\end{align*}
In particular, using the set of all minimizers of the Moreau-Yosida functional $\Phi(\tau,t,u;v)$, denoted by the \emph{resolvent} $J_{\tau,t}(u)$, we are able to define an approximate discrete solution $u_{\btau,\omega}^k \in J_{\tau_k, \omega t^k_{\btau}}(u_{\btau,\omega}^{k-1})$, and the corresponding piecewise constant interpolation $\bar{u}_{\btau,\omega}$ defined via
	\begin{align}
		\bar{u}_{\btau,\omega} (t) =  u_{\btau,\omega}^k \qquad \mathrm{for} \ t\in \left[ t^k_\btau, t^{k+1}_\btau  \right) \quad \mathrm{and} \ k \in \Nzero , \label{PCInterpolation2}
	\end{align}
	which will converge in a suitable notion to a curve of steepest descent. Concerning our existence result Theorem \ref{thm:abstractgf}(a), a similar study has been made by Ferreira and Guevara in \cite{fg}. However, our results hold also without the rather restrictive convexity assumption (1) from Theorem \ref{thm:abstractgf}(b) which has been of extensive use in \cite{fg}. The only concession to be made is that we do not obtain a contractive gradient flow nor uniqueness of curves of steepest descent. Most of our results follow from a careful generalization of the autonomous theory on metric gradient flows by Ambrosio, Gigli and Savaré \cite{ags}, also in view of the theory by Rossi, Mielke and Savaré \cite{rms} for the non-autonomous case under stricter assumptions. Our results on this topic are therefore in close relationship to the results of \cite{ags} and \cite{fg}. The specific novelty of our work lies in to investigation of rapidly oscillating potentials and their behaviour in the limit of infinite oscillation frequency. On grounds of the result in \cite{fg}, we are not only able to show convergence to a time-averaged problem, which is, from a homogenization point of view, the natural limit problem, but obtain a specific rate of convergence (cf. Theorem \ref{thm:abstractgf}(b)). 

As a more concrete application of the abstract results, we study the time-dependent version of the nonlinear Fokker-Planck equation \eqref{FPeq}. Even if the associated free energy functional $\mathcal{F}_{(\cdot)}$ is not (semi-)convex along geodesics, we prove the existence of weak solutions via the well-known Minimizing Movement approximation (see Theorem \ref{thm:fpe}(a)). Since the seminal article by Jordan, Kinderlehrer and Otto on the linear autonomous Fokker-Planck equation \cite{jko}, this method has become fairly classical for the treatment of evolution problems with a formal gradient flow structure (see, for instance, \cite{otto2001, gianazza2009, MatthesMcCannSavare, laurencot2011, blanchet2012, lisini2012, zinsl2014, kmx}). However, in contrast to non-convex confinement potentials (which have already been considered in \cite{jko}), our result on the existence of solutions for problems with non-convex \emph{interaction} potentials is novel (also compare to the works by Carrillo, McCann and Villani \cite{cmv2003, carrillo2006contractions}). Again, the study of highly oscillating potentials is of peculiar interest. The cornerstone of our analysis is the Lipschitz assumption (W6) for the interaction potential which guarantees \emph{a priori} estimates that are uniform with respect to the frequency parameter $\omega>0$. This enables us to pass to the high-frequency limit $\omega\to\infty$ directly in the distributional formulation of equation \eqref{FPeq} to obtain the weak formulation for the limit problem \eqref{limFPeq} (cf. Theorem \ref{thm:fpe}(b)). Numerical studies in one spatial dimension confirming our results have been made in \cite{kim2010}. In contrast to the article by Bartier \emph{et al.}, our techniques \emph{do not rely} on the possibility of rescaling the problem but allow for more general time-dependent interaction potentials.\\

The plan of the paper is as follows. First, we give a brief overview on analysis and gradient flows on metric spaces in Section \ref{Preliminaries} before we set up the framework for time-dependent gradient flows in Section \ref{sec:GradFlow}, proving Theorem \ref{thm:abstractgf}(a). Section \ref{sec:FPeq} is concerned with the nonlinear Fokker-Planck equation \eqref{FPeq}; we prove Theorem \ref{thm:fpe}(a) there. In Section \ref{sec:high}, we first show convergence to the time-averaged abstract problem (completing the proof of Theorem \ref{thm:abstractgf}). Afterwards, the limit behaviour of the nonlinear Fokker-Planck equation \eqref{FPeq} is studied to finish the proof of Theorem \ref{thm:fpe}.


\section{Preliminaries}\label{Preliminaries}

Before starting with the proofs of the main theorems, let us first briefly introduce the theoretical framework of the analysis in abstract metric spaces and later the Wasserstein-formalism of the Fokker-Planck equation. For a broader and more detailed discussion in this direction of the analysis of autonomous problems in metric spaces, we refer to the monograph by Ambrosio \emph{et al.} \cite{ags}. We also want to cite the monograph of Villani \cite{villani} for more details on optimal transportation and the differential structure of the $L^2$-Wasserstein distance as the link between the abstract metric evolution systems and the Fokker-Planck equation.

\subsection{Analysis in metric spaces}

Given a separable, complete metric space $\left(\X, \bd \right)$, we shall introduce a weaker Hausdorff topology $\bs$ on $\bm{X}$, which is compatible with $\bm{d}$, which allows us more flexibility to derive compactness results. From now on we propose the convention to write
	\begin{equation*}
		u_n  \dar u \quad \mathrm{for \ the \ convergence \ w.r.t. \ } \bd, \qquad u_n  \sar u \quad  \mathrm{for \ the \ convergence \ w.r.t. \ } \bs.
	\end{equation*}				
Compatibility of $\bs$ with $\bd$ means in this context
	\begin{equation*}
			u_n  \dar u \ \Longrightarrow 	\ 	u_n  \sar u, \qquad \qquad 			\left(u_n,v_n \right) \sar (u,v) \  \Longrightarrow \ \bd(u,v) \leq \liminf_{\ntoinf} \bd(u_n,v_n).
	\end{equation*}
A curve $u: \left[0,\infty \right) \rightarrow \X$ is said to be $L^2$-absolutely continuous, we write $u\in \mathrm{AC}^2 \left( \left[0,\infty\right),\X\right)$, if there exists a function $m \in L^2_{\mathrm{loc}} \left( \left[0,\infty\right) \right)$ such that
	\begin{equation*}
		\bm{d}(u(t),u(s)) \leq \int_s^t m(r) \dr \qquad \mbox{for all} \ \ 0\leq s \leq t. \label{definitionAbsCont}
	\end{equation*}
Among all possible choices for $m$, there is a minimal one called the metric derivative $|u'|\in L^2_{\mathrm{loc}} \left( \left[0, \infty\right) \right)$, which is explicitly given by
	\begin{equation*}
	|u'|(t) := \lim_{s\rightarrow t} \frac{\bd(u(s),u(t))}{\left|s-t\right|}\qquad \mathrm{for \ a.e. \ } t. \label{MetricDerivative}
	\end{equation*}
Furthermore, sort of a ``modulus of the gradient" for functionals defined on metric spaces is given by the the local slope $| \partial \mathcal{F} |:  \X  \rightarrow \Ru $ of $\mathcal{F}$ at $u \in \X$, defined via
		\begin{equation*}
			| \partial \mathcal{F} |(u) := \limsup_{v \to u} \left(\frac{  \mathcal{F}(u) - \mathcal{F}(v)}{\bd (u,v)}\right)^+ . \label{DefLocalSlope}
		\end{equation*}

\subsection{$L^2$-Wasserstein space}

The underlying space of the Wasserstein-formalism of the Fokker-Planck equation is the space of probability measure $\mu \in \Pzwei$ with bounded second moments, i.e., 
	\begin{equation*}
		\M(\mu) := \int_{\Rd} \left\|x\right\|^2  \dm(x) < \infty .
	\end{equation*}
This space can be endowed with the so-called \emph{$L^2$-Wasserstein distance} $\W$ defined as
\begin{align*}
\W^2(\mu,\nu) := \left( \W(\mu,\nu) \right)^2 := \inf_{\bp \in \Gamma(\mu,\nu)} \ \  \iint\limits_{\Rd\times\Rd} \left\|x-y\right\|^2\, \mathrm{d} \bp(x,y) , 
\end{align*}
where $\Gamma(\mu,\nu)$ denotes the set of all transport plans from $\mu$ to $\nu$, i.e.,
	\begin{equation*}
	\Gamma(\mu,\nu) := \left\{ \bp \in \mathscr{P}(\Rd \times \Rd) : \bp \ \mathrm{ has \ marginals } \ \mu \text{ and } \nu \right\}.
	\end{equation*}
 It is well known that indeed $\left(\Pzwei, \W \right)$ is a separable, complete metric space, see, for instance, \cite[Chapter 6]{ags}. Furthermore, if $\mu$ is absolutely continuous with respect to the Lebesgue measure (write $\mu\in\Paz$), then by the Brenier-McCann theorem \cite[Thm. 2.12]{villani} the infimum in $\W(\mu,\nu)$ is attained at some $\bp \in \Gamma(\mu,\nu)$, which is denoted as the \emph{optimal transport plan}.

Now the weak topology $\bs$ is induced by the $weak^* \ convergence$, or also called \emph{narrow convergence}, of measures, i.e. a sequence $(\mu_n)_{n\in\N}$ is said to \emph{converge narrowly} to some limit probability measure $\mu$ if for all continuous and bounded maps $f:\Rd \to\R$, one has
\begin{align*}
\lim_{\ntoinf}\int_\Rd f(x)\ \mathrm{d} \mu_n (x)&=\int_\Rd f(x) \dm(x).
\end{align*}
The energy functional in the Wasserstein formulation of the Fokker-Planck equation, defined in \eqref{eq:energy}, will be decomposed into a time independent part, the \emph{internal energy}, and into a time-dependent perturbation, the \emph{interaction potential}, i.e., the internal energy is given by
	\begin{align*}
 \mathrm{for} \ m=1: \quad \mathcal{H}(\mu) & :=  \int_\Rd \rho(x) \log \left( \rho(x) \right)  \dx, &	\mathrm{or \ for} \ m>1: \quad \mathcal{U}_m (\mu) & :=  \frac{1}{m-1} \int_\Rd \rho^m(x)  \dx,  
			\end{align*}
	where $\mu$ is absolutely continuous with respect to the Lebesgue measure with density $\rho$. For measures $\mu$, which are singular with respect to the Lebesgue measure, we set $\mathcal{H}(\mu)=\infty$, respectively $\mathcal{U}_m(\mu) = \infty$. Therefore, by a slight abuse of notation, we shall often identify the measure $\mu$ and its corresponding density $\rho$. The according \emph{proper domains} of $\mathcal{H}$ and $\mathcal{U}_m$ are given by
	\begin{align*}	
	 \mathcal{K}_1  := \mathcal{D}\left(\mathcal{H}\right) & =  \left\{ \mu \in \Paz   \left|  \mathrm{d} \mu = \rho \  \mathrm{d} \lambda, \ \rho \log \left( \rho \right)  \in L^1 ( \Rd )  \right.\right\},\\
		\mathcal{K}_m  := \mathcal{D}\left(\mathcal{U}_m\right) & = \left\{ \mu \in \Paz   \left|  \mathrm{d} \mu = \rho \  \mathrm{d} \lambda,\ \rho \in L^m ( \Rd )  \right.\right\} .
	\end{align*}
Further, the interaction potential is defined on the whole space $\Pzwei$ via
		\begin{equation}
			\tW_t(\mu) = \frac{1}{2} \iint\limits_{\Rd \times \Rd} W_t (x,y) \ \mathrm{d} \mu \otimes \mu (x,y). \label{DefInterEng} 
	\end{equation}
	Obviously, the condition $\mathcal{F}_{0,1}(\rho_0)$ for the initial value is then equivalent to $\rho_0 \in \mathcal{K}_m$.

\subsection{Auxiliary theorems}

As a conclusion to this section, we state two theorems which will be of major use to prove our main theorem \eqref{thm:fpe}. First, we state a version of the so-called \emph{flow interchange lemma}  adapted to the situation at hand. For this reason, introduce the $0$-flow $\mathsf{S}^{\mathcal{H}}$ of the entropy functional $\mathcal{H}$ starting at $\mu$, i.e., $\mathsf{S}^{\mathcal{H}}_s(\mu)$ is the unique solution of the evolution variation equation \cite{ags}
	\begin{equation*}
	\frac{1}{2} \frac{\mathrm{d}}{\mathrm{d} t} \left. \W^2( \mathsf{S}^{\mathcal{H}}_t(\mu)  , \nu) \right|_{t=s}	+ \mathcal{H} (\mathsf{S}^{\mathcal{H}}_s(\mu) ) \leq \mathcal{H}(\nu) \qquad  \mathrm{for \ a.e. \ } s>0, \ \text{and all} \ \nu \in \mathcal{K}_1.
	\end{equation*}
	with $\lim_{s\searrow 0} \mathsf{S}^{\mathcal{H}}_s(\mu)  = \mu$ in $\W$. Moreover, $\mathsf{S}^{\mathcal{H}}_s(\mu)$  is also the unique solution of the heat equation with initial value $\mu$, for further details see \cite[Chapter 4]{ags}, and therefore $\mathsf{S}^{\mathcal{H}}_s(\mu)$ inherits all regularization properties of the heat equation.

\begin{thm}[Flow interchange lemma {\cite[Theorem 3.2]{MatthesMcCannSavare}}]\label{FlowInterchange}
Let $\mathcal{V}:\X \rightarrow \R\cup\{\infty\}$ be a proper, lower semi-$\bs$-continuous functional on $(\Pzwei,\W)$ such that $\mathcal{D}(\mathcal{V})\subset \mathcal{K}_1$. Assume that, for arbitrary $\tau>0$ and $\nu \in \Pzwei$, the Moreau-Yosida-functional of $\mathcal{V}$ at $\nu$ possesses a minimizer $\mu$ on $\Pzwei$. Then, the following holds:
\begin{align*}
\mathcal{H}(\mu)+\tau  \mathfrak{D}^\mathcal{H}\mathcal{V}(\mu) &\leq  \mathcal{H}(\nu).
\end{align*}
There, $\mathfrak{D}^\mathcal{V}(\mu)$ denotes the \emph{dissipation} of a functional $\mathcal{V}$ along the $0$-flow $\mathsf{S}^{\mathcal{H}}$ and is given by
\begin{align*}
\mathfrak{D}^\mathcal{H}\mathcal{V}(\mu):=\limsup_{s\searrow 0}\frac{\mathcal{V}(\mu)-\mathcal{V}(\mathsf{S}_{s}^{\mathcal{H}}(\mu))}{s}. 
\end{align*}
\end{thm}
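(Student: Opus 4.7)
The plan is to exploit the minimality of $\mu$ by using $\mathsf{S}_s^{\mathcal{H}}(\mu)$ as a competitor in the Moreau-Yosida problem, and then to convert the resulting Wasserstein increment into an entropy difference through the evolution variational inequality (EVI) that characterises the heat flow as the $0$-gradient flow of $\mathcal{H}$.

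Concretely, first I would observe that since $\mu \in \mathcal{K}_1$ (because $\mathcal{D}(\mathcal{V})\subset\mathcal{K}_1$), the curve $s\mapsto \mathsf{S}_s^{\mathcal{H}}(\mu)$ is well defined, starts at $\mu$, and remains in $\mathcal{K}_1$ thanks to the regularising effect of the heat flow. The minimizing property of $\mu$ then yields, for every $s>0$,
\[
\mathcal{V}(\mu) - \mathcal{V}(\mathsf{S}_s^{\mathcal{H}}(\mu)) \;\leq\; \frac{1}{2\tau}\bigl[\W^2(\mathsf{S}_s^{\mathcal{H}}(\mu),\nu) - \W^2(\mu,\nu)\bigr].
\]
Second, the EVI satisfied by $\mathsf{S}^{\mathcal{H}}$ against the test point $\nu \in \mathcal{K}_1$ gives, for a.e.\ $r>0$,
\[
\frac{1}{2}\frac{\dn}{\dn r}\W^2(\mathsf{S}_r^{\mathcal{H}}(\mu),\nu) \;\leq\; \mathcal{H}(\nu) - \mathcal{H}(\mathsf{S}_r^{\mathcal{H}}(\mu)).
\]
Integrating from $0$ to $s$ and using $\mathsf{S}_0^{\mathcal{H}}(\mu)=\mu$, I obtain
\[
\W^2(\mathsf{S}_s^{\mathcal{H}}(\mu),\nu) - \W^2(\mu,\nu) \;\leq\; 2\int_0^s \bigl[\mathcal{H}(\nu) - \mathcal{H}(\mathsf{S}_r^{\mathcal{H}}(\mu))\bigr]\dr.
\]

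Combining the two inequalities, dividing by $s$, and taking $\limsup_{s\searrow 0}$, the left-hand side converges to $\mathfrak{D}^{\mathcal{H}}\mathcal{V}(\mu)$ by definition, while for the right-hand side I would use that $r\mapsto \mathcal{H}(\mathsf{S}_r^{\mathcal{H}}(\mu))$ is non-increasing (entropy decays along its own gradient flow) and right-continuous at $0$ — a fact that follows from the EVI together with the lower semicontinuity of $\mathcal{H}$ and the convergence $\mathsf{S}_s^{\mathcal{H}}(\mu)\to\mu$ in $\W$. Hence
\[
\frac{1}{s}\int_0^s \mathcal{H}(\mathsf{S}_r^{\mathcal{H}}(\mu))\dr \;\longrightarrow\; \mathcal{H}(\mu) \qquad \text{as } s\searrow 0,
\]
so that $\mathfrak{D}^{\mathcal{H}}\mathcal{V}(\mu)\leq \tau^{-1}\bigl(\mathcal{H}(\nu)-\mathcal{H}(\mu)\bigr)$, which is exactly the asserted inequality after rearrangement.

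The main subtlety I expect to face is the rigorous justification of the right-continuity of $s\mapsto\mathcal{H}(\mathsf{S}_s^{\mathcal{H}}(\mu))$ at $s=0$ (so that the time-averaged entropy converges to $\mathcal{H}(\mu)$ and not to some a priori smaller value); this is essentially where one uses that $\mu$ already lies in $\mathcal{K}_1$, preventing an entropy jump at the initial time, and exploits the $\bs$-lower semicontinuity of $\mathcal{H}$ together with the monotonicity of $\mathcal{H}\circ\mathsf{S}^{\mathcal{H}}$. Every other step is a direct consequence of the minimality inequality and the EVI, both of which are available from the hypotheses.
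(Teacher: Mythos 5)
Your proof is correct and is precisely the argument of the cited reference (the paper itself states this lemma without proof, quoting Matthes--McCann--Savar\'e): compare against $\mathsf{S}_s^{\mathcal{H}}(\mu)$ in the Moreau--Yosida problem, integrate the EVI of the heat flow, divide by $s$, and use the lower semicontinuity of $\mathcal{H}$ together with $\mathsf{S}_s^{\mathcal{H}}(\mu)\to\mu$ to identify the limit of the averaged entropy. The only point worth adding is the trivial remark that when $\nu\notin\mathcal{K}_1$ one has $\mathcal{H}(\nu)=+\infty$ and the asserted inequality holds vacuously, so restricting the EVI to test points $\nu\in\mathcal{K}_1$ costs nothing.
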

Second, we formulate an extension of the \emph{Aubin-Lions compactness lemma} for metric spaces.
\begin{thm}[Extension of the Aubin-Lions Lemma {\cite[Theorem 2]{RossiSavare}}] \label{LmLmConv}
Let $\X$ be a separable Banach space, $\mathcal{A}:\X \rightarrow \Ru$ be lower semi-continuous and with compact sublevels in $\X$, and $g: \X\times \X \rightarrow \Ru$ be lower semi-continuous and such that $g(u,v)=0$ for $u,v \in \mathcal{D}( \mathcal{A})$ implies $u=v$. Let $\left(u_n\right)_\ninN$ be a sequence of measurable functions $u_n:(0,T)\to\X$ such that
\begin{align}
\sup_{n} & \int_0^T \mathcal{A} \left( u_n(t)\right) \dt < \infty  , &  \liminf_{h \searrow 0 } \limsup_{\ntoinf} & \frac{1}{h} \int_0^h \int_0^{T-t} g(u_n(s+t),u_n(s)) \dss \dt =0 . \label{AssLmLmConv}
\end{align}
Then, $\left(u_n\right)_\ninN$ posses a subsequence converging in measure. 
\end{thm}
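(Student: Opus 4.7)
The plan is to combine a pointwise-in-time compactness coming from the sublevel condition on $\mathcal{A}$ with a control on time oscillations coming from the pseudo-distance $g$, glued together via a Young measure / narrow convergence argument. Although the statement is attributed to Rossi--Savar\'e, let me sketch how I would approach it from scratch.

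First, I would exploit assumption \eqref{AssLmLmConv} on $\mathcal{A}$. By the Markov inequality applied to $\int_0^T \mathcal{A}(u_n(t))\,\mathrm{d}t$, for every $R > 0$ the bad set $E_n^R := \{ t \in (0,T) : \mathcal{A}(u_n(t)) > R \}$ has measure $\le C/R$ uniformly in $n$. On $(0,T)\setminus E_n^R$ the values $u_n(t)$ lie in the compact sublevel $\{\mathcal{A}\le R\} \subset \X$, so the family $\{u_n(t)\}$ is essentially tight uniformly in $n$. View each $u_n$ as a (parametrized) Young measure $\bm{\nu}^n : t \mapsto \delta_{u_n(t)}$ on $\X$. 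The previous tightness, combined with the fact that $\X$ is a separable Banach space (hence Polish), gives tightness of the family $(\bm{\nu}^n)_n$ in the space of Young measures on $(0,T)\times\X$; extract a subsequence converging narrowly to some limit $\bm{\nu} : t\mapsto \nu_t$, where $\nu_t$ is a probability measure on $\X$ supported, for a.e. $t$, inside $\mathcal{D}(\mathcal{A})$.

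Next I would use assumption \eqref{AssLmLmConv} on $g$ to force each $\nu_t$ to be a Dirac mass. For fixed $h>0$, the double integral
\begin{equation*}
I_n(h) := \frac1h \int_0^h \int_0^{T-t} g(u_n(s+t),u_n(s)) \,\mathrm{d}s\,\mathrm{d}t
\end{equation*}
can be written along the subsequence in terms of the Young measures $\bm{\nu}^n$. Because $g$ is lower semicontinuous and nonnegative, narrow convergence together with the fact that $u_n(s+t), u_n(s)$ become arbitrarily close in time as $h\to 0$ yields
\begin{equation*}
\liminf_{h\searrow 0}\liminf_{\ntoinf} I_n(h) \ge \int_0^T \int_{\X\times\X} g(u,v)\,\mathrm{d}(\nu_s\otimes\nu_s)(u,v)\,\mathrm{d}s.
\end{equation*}
The hypothesis forces the left-hand side to be zero, so $g \equiv 0$ on $\mathrm{supp}(\nu_s\otimes\nu_s)$ for a.e. $s$. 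Since $\nu_s$ is supported in $\mathcal{D}(\mathcal{A})$ and since $g(u,v)=0$ with $u,v\in\mathcal{D}(\mathcal{A})$ implies $u=v$, the support of $\nu_s\otimes\nu_s$ is contained in the diagonal, which means $\nu_s = \delta_{u(s)}$ for some measurable $u:(0,T)\to\X$.

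Finally, narrow convergence of $\bm{\nu}^n = \delta_{u_n(\cdot)}$ to a Dirac Young measure $\delta_{u(\cdot)}$ is equivalent to convergence in measure of $u_n$ to $u$ on $(0,T)$, yielding the claim. The main obstacle is making the time-shift passage to the limit in Step~3 rigorous: one must justify, carefully and uniformly in the small shift $t\in(0,h)$, that the integrand $g(u_n(s+t),u_n(s))$ passes to the lower semicontinuous functional against the product Young measure. This requires either a careful disintegration argument together with Fatou, or the use of generalized narrow topologies on Young measures with a joint time-slice structure, which is precisely the technical core of the Rossi--Savar\'e machinery.
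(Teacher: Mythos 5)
The paper does not actually prove this statement: it is quoted verbatim from Rossi--Savar\'e \cite[Theorem~2]{RossiSavare}, and the only original content is the remark immediately following it, namely that an inspection of the proof in that reference shows that the usual integral equi-continuity hypothesis can be weakened to the averaged condition in \eqref{AssLmLmConv}. So there is no in-paper proof to compare yours against; your sketch is, in outline, the same Young-measure strategy that underlies the cited proof (tightness of the Young measures $\delta_{u_n(\cdot)}$ from the coercive normal integrand $\mathcal{A}$, narrow compactness, and then using $g$ to force the limit Young measure to be a Dirac a.e., which is equivalent to convergence in measure).

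However, as a self-contained argument your proposal has a genuine gap, and it sits exactly where you place the ``main obstacle''. The inequality
\begin{equation*}
\liminf_{h\searrow 0}\liminf_{\ntoinf} I_n(h) \;\ge\; \int_0^T \int_{\X\times\X} g(u,v)\,\mathrm{d}(\nu_s\otimes\nu_s)(u,v)\,\mathrm{d}s
\end{equation*}
is not a consequence of ``lower semicontinuity plus Fatou''. For each fixed shift $t>0$, what lower semicontinuity of $g$ against Young measures gives you is a bound from below by $\int\!\int g\,\mathrm{d}\lambda^t_s$, where $\lambda^t_s$ is the joint (pair) Young measure generated by $(u_n(s+t),u_n(s))$; this $\lambda^t_s$ is merely \emph{some coupling} of $\nu_{s+t}$ and $\nu_s$, not their product, and there is no general inequality $\int g\,\mathrm{d}\lambda \ge \int g\,\mathrm{d}(\mu_1\otimes\mu_2)$ for couplings $\lambda$ (take $g$ a distance and $\lambda$ the diagonal coupling of $\mu$ with itself). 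If in the limit you only obtain a coupling of $\nu_s$ with itself on which $g$ vanishes, the conclusion is that this coupling is concentrated on the diagonal --- which is satisfied by the trivial coupling $(\mathrm{id},\mathrm{id})_\#\nu_s$ for \emph{any} $\nu_s$, and hence yields no information whatsoever about $\nu_s$ being a Dirac. Producing the product measure (or at least a coupling whose support fills $\mathrm{supp}\,\nu_s\times\mathrm{supp}\,\nu_s$) is precisely where the averaging in $t\in(0,h)$ and the order of the limits $n\to\infty$, $h\searrow 0$ must be exploited; this is the actual content of the Rossi--Savar\'e machinery and cannot be waved through. Since you explicitly defer this step to the reference, your proposal is an accurate roadmap of the known proof rather than a proof; within the logic of the present paper that is acceptable (the paper itself only cites the result), but the displayed inequality should be attributed to \cite{RossiSavare} rather than asserted.
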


\begin{remark}
Note that we replaced the usual weak integral equi-continuity condition 
	\begin{equation*}
		\lim_{h \searrow 0} \sup_{u\in\mathcal{U}}  \int_0^{T-h} g\left( u(t+h),u(t) \right) \ dt =0,
	\end{equation*}
	given in the original version of the theorem, since in the proof of the theorem it is sufficient to have the relaxed averaged weak integral equi-continuity, given in the theorem above.
\end{remark}


\section{Abstract time-dependent gradient flows}\label{sec:GradFlow}


\subsection{Main topological assumptions}\label{ssec:topolog}
	
This chapter of this work is seen as an extension of the well-developed existence theory for autonomous evolution equations in metric spaces \cite{ags}. We shall work throughout the rest of this chapter with the following assumptions to the functional $\mathcal{E}$. 	


	\begin{assumption}[Regularity of $\mathcal{E}$] \label{DefRegE}
	The energy functional $\mathcal{E}: \X \rightarrow \Ru$ is proper and satisfies the following regularity conditions: 
	\begin{enumerate}[({E}1)]
  \item  $\mathcal{E}$ is sequentially lower semi-$\bs$-continuous on $\bd$-bounded sets:
					\begin{equation*}
						\sup_{n,m} \bd (u_n,u_m) < \infty, \quad u_n \sar u \qquad \Longrightarrow \qquad \mathcal{E} (u) \leq \liminf_\ntoinf \mathcal{E}(u_n).
					\end{equation*}
 \item There exist $\tau_* > 0$ and $u_* \in \X$ such that:
					\begin{equation*}
					 c_* := \inf_{v\in \X} \frac{1}{2 \tau_*} \bd^2(u_*,v) + \mathcal{E}(v) > - \infty . 
					\end{equation*}
  \item Every $\bd$-bounded set contained in a sublevel of $\mathcal{E}$ is relatively sequentially $\bs$-compact, i.e.:
					\begin{align*} 
					\begin{split}
						\mathrm{if} & \left(u_n \right)_\ninN \subset  \X \ \mathrm{with} \ \sup_n \mathcal{E}(u_n)< \infty , \ \mathrm{and} \quad \sup_{n,m} \bd(u_n,u_m) < \infty ,  \  \mathrm{then} \\
								&   \ \left(u_n \right)_\ninN  \mathrm{ \ contains \ a \ } \bs \mathrm{ -convergent \ subsequence}. 
					\end{split}
					\end{align*}
					\end{enumerate}
	\end{assumption}

For the analysis of the non-autonomous initial value problem, we need to propose some regularity assumption on the functional $\mathcal{P}_t$ to control the influence of the time-dependent perturbation.


\begin{assumption}[Regularity of $\mathcal{P}_t$] \label{DefRegP}
The perturbation functional $\mathcal{P}_t: \left[0,\infty \right) \times \X \rightarrow \R$ satisfies the following regularity conditions: 
\begin{enumerate}[({P}1)]
\item $\mathcal{P}_t$ is $\bs$-continuous on $\bd$-bounded sets: 
	\begin{equation*}
					\sup_{n,m} \bd(u_n,u_m) < \infty, \quad u_n \sar u, \quad t_n \rightarrow t  \qquad \Longrightarrow \qquad \lim_\ntoinf \mathcal{P}_{t_n}(u_n) = \mathcal{P}_t (u) .  
	\end{equation*}
		\item There exist $\tau^* > 0$ and $u^* \in X$ such that:
					\begin{equation*}
					 c^* := \inf_{t\in \left[ 0, \infty \right) } \inf_{v\in \X} \frac{1}{2 \tau^*} \bd^2(u^*,v) + \mathcal{P}_t(v) > - \infty . 
					\end{equation*}
\item There exists a non-negative function $\alpha \in L^1_{\mathrm{loc}} \left( \left[0,\infty \right) \right)$ such that for all $u \in \X$ and for all $0 \leq s \leq t$, it holds that:
						\begin{equation*}
\left|\mathcal{P}_t(u)-\mathcal{P}_s(u)\right|\leq  \int_s^t \alpha (r) \dr (1+ \bd^2(u^*,u)). 
					\end{equation*}
					Moreover, the set $\mathcal{N}_\alpha$ given by 
					\begin{equation*}
						\mathcal{N}_\alpha := \left\{ t>0 \left| \liminf_{\tau \searrow 0} \sup_{\sigma \in \left(0,\tau \right) } \frac{\sigma}{\tau-\sigma} \int_{t+ \sigma}^{t+ \tau} \alpha(r) \dr = \infty \right. \right\}
					\end{equation*}
					is at most countable.
		\item For all $u \in \X$, the partial derivative $\partial_t \mathcal{P}_t(u)$ exists and is $\bs$-continuous on $\bd$-bounded sets:
	\begin{equation*}
					\sup_{n,m} \bd(u_n,u_m) < \infty, \quad u_n \sar u, \quad t_n \rightarrow t \qquad \Longrightarrow \qquad \lim_\ntoinf \partial_t \mathcal{P}_{t_n}(u_n) = \partial_t \mathcal{P}_t (u) .  \label{DiffofP}
	\end{equation*}
	\end{enumerate}
\end{assumption}

\begin{remark} Without loss of generality, we can assume that $\tau_* = \tau^*$, $c_*=c^*$, and $u_*=u^*$ and that
	\begin{equation*}
					 c_* = \inf_{t\in \left[ 0, \infty \right) } \inf_{v\in \X} \frac{1}{2 \tau_*} \bd^2(u_*,v) + \mathcal{F} (v)  + \mathcal{P}_t(v) > - \infty . 
	\end{equation*}
	Further, note that $\mathcal{N}_\alpha$ is at most countable if $\alpha$ is of bounded mean oscillation, essentially bounded or has at most countable many points which are not Lebesgue points. 
\end{remark}

As already mentioned in \cite{rms,fg} a crucial ingredient in the derivation of the energy identity for curves of steepest descent is the chain rule inequality.


\begin{assumption}[Local slope and chain rule inequality]\label{DefChainRule}
The local slope $|\partial \left( \mathcal{E} + \mathcal{P}_t \right) |$ of $\mathcal{E} + \mathcal{P}_t$ at time $t$ is lower semi-$\bs$-continuous and satisfies the chain rule condition, i.e., if for any curve $u\in \mathrm{AC}^2 \left( \left[0,\infty\right),\X\right)$ with 
	\begin{align*}
		|\partial \left( \mathcal{E} + \mathcal{P}_t \right)  |(u(t)) |u'|(t) \in L^1_{\mathrm{loc}} \left( \left[0,\infty\right)\right) \qquad \mathrm{and} \qquad 
		\sup_{t\in \left[0,T\right]} ( \mathcal{E} + \mathcal{P}_t ) (u(t)) < \infty,
	\end{align*}
the map $t \mapsto \mathcal{E}(u(t)) + \mathcal{P}_t(u(t))$ is absolutely continuous, and for all $ 0 \leq s \leq t$:
	\begin{equation}
		 \mathcal{E}(u(s)) + \mathcal{P}_s(u(s))+ \int_s^t \partial_t \mathcal{P}_r ( u(r)) \dr \leq \mathcal{E}(u(t)) + \mathcal{P}_t(u(t)) + \int_s^t |\partial\left( \mathcal{E} + \mathcal{P}_t \right)  |(u(r)) |u'|(r) \dr.   \label{UpperGradientIneq}
	\end{equation}
\end{assumption}


\subsection{Moreau-Yosida approximation and resolvent}

Define the \emph{Moreau-Yosida functional}
				\begin{equation*}
					\Phi(\tau,t,u; \cdot):  \X \rightarrow \Ru; \ \Phi(\tau,t,u; v):= \frac{1}{2\tau} \bd^2(u,v) + \mathcal{E}(v) + \mathcal{P}_t(v) \label{MYfct}
				\end{equation*}	
			and furthermore define the \emph{Moreau-Yosida approximation} of $\mathcal{E}+\mathcal{P}_t$ by
							\begin{equation*}
					\phi(\tau,t,u):=\inf_{v \in \X} \Phi(\tau,t,u; v) = \inf_{v \in \X} \frac{1}{2\tau} \bd^2(u,v) + \mathcal{E}(v) + \mathcal{P}_t(v). \label{MYApprox}
				\end{equation*}	
	The well-posedness of the \emph{Minimizing Movement scheme} \eqref{MMs} is equivalent to the existence of a minimizer of the $\phi$. The set of all minimizers is called the \emph{resolvent} $J_{\tau,t}$ and is given by
		\begin{equation*}
			J_{\tau,t}(u) = \left\{ v \in \X \mid \Phi(\tau,t,u; v)  =  \phi(\tau,t,u)  \right\}. \label{Resolvent}
		\end{equation*}

\begin{remark} 
By construction of the Moreau-Yosida approximation, we have the following monotonicity 
		\begin{equation}
			 \phi(\sigma,t,u) 	\leq \phi(\tau,t,u) \leq	\mathcal{E}(u) + \mathcal{P}_t(u)  \qquad \mbox{for} \ \sigma \geq \tau >0. \label{MYmonotonicity}
		\end{equation}
\end{remark}

In the following we show that indeed the \emph{time-dependent Minimizing Movement scheme} \eqref{MMs} is well-defined in the abstract metric setting for sufficiently small $\tau$. Further facts about the time-dependent facts of the Moreau-Yosida approximation and of the resolvent \cite{ags}, for instance a priori estimates, continuity results and differentiability properties, in the case if the functionals are not bounded from below, as in \cite{rms}, or do not exhibit a $\lambda$-convex structure, as in \cite{fg}, are skipped for the sake of brevity and given in the Appendix.


\begin{thm}[Existence of a minimizer] \label{ExistenceMMS}
	For all $\tau \in \left(0 , \tau_* \right)$, for all $t \in \left[0,\infty \right)$ and for all $ u \in \X$, there exists a minimizer $v_* \in \mathcal{D}(\mathcal{E})$ of $\ \Phi(\tau,t,u, \cdot)$, i.e., 
		\begin{equation*}
			J_{\tau,t}(u) \neq \emptyset .
		\end{equation*}
\end{thm}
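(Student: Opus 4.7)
The plan is to apply the direct method of the calculus of variations, using assumptions (E1)--(E3) and (P1)--(P2) in the standard way. The argument splits naturally into four steps: a two-sided bound on $\phi(\tau,t,u)$, coercivity of $\Phi(\tau,t,u;\cdot)$ along minimizing sequences, compactness, and lower semi-continuity.

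First, I would establish that $-\infty < \phi(\tau,t,u) < \infty$. The upper bound is immediate from the properness of $\mathcal{E}$: any $w\in\mathcal{D}(\mathcal{E})$ yields $\Phi(\tau,t,u;w)<\infty$ by (P2). For the lower bound I would use the combined form of (E2)+(P2) stated in the Remark, namely $\mathcal{E}(v)+\mathcal{P}_t(v)\geq c_*-\tfrac{1}{2\tau_*}\bd^2(u_*,v)$, together with Young's inequality in the form $\bd^2(u_*,v)\leq (1+\varepsilon)\bd^2(u,v)+(1+\tfrac{1}{\varepsilon})\bd^2(u_*,u)$. This yields
\[
\Phi(\tau,t,u;v)\ \geq\ \left(\frac{1}{2\tau}-\frac{1+\varepsilon}{2\tau_*}\right)\bd^2(u,v)\ -\ \frac{1+1/\varepsilon}{2\tau_*}\bd^2(u_*,u)\ +\ c_*.
\]
Because $\tau<\tau_*$, I can pick $\varepsilon>0$ small enough that the coefficient in front of $\bd^2(u,v)$ is strictly positive; this simultaneously gives the finite lower bound on $\phi$ and the coercivity needed later.

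Second, pick a minimizing sequence $(v_n)_{n\in\N}$ for $\Phi(\tau,t,u;\cdot)$. From the coercivity estimate above, $\bd(u,v_n)$ is uniformly bounded, hence so is $\bd(u_*,v_n)$, so $(v_n)$ is $\bd$-bounded. Using (P2) once more, $\mathcal{P}_t(v_n)\geq c_*-\tfrac{1}{2\tau_*}\bd^2(u_*,v_n)$ is bounded from below, and since $\Phi(\tau,t,u;v_n)$ is bounded from above, $\mathcal{E}(v_n)$ must be bounded from above as well. Assumption (E3) then produces a (non-relabelled) subsequence and some $v_*\in\X$ with $v_n\sar v_*$.

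Third, I would pass to the limit. By the compatibility of $\bs$ with $\bd$, $\bd^2(u,v_*)\leq\liminf_n\bd^2(u,v_n)$. Sequential lower semi-$\bs$-continuity of $\mathcal{E}$ on $\bd$-bounded sets (E1) gives $\mathcal{E}(v_*)\leq\liminf_n\mathcal{E}(v_n)$, while the $\bs$-continuity of $\mathcal{P}_t$ on $\bd$-bounded sets (P1) gives $\mathcal{P}_t(v_*)=\lim_n\mathcal{P}_t(v_n)$. Summing these,
\[
\Phi(\tau,t,u;v_*)\ \leq\ \liminf_{n\to\infty}\Phi(\tau,t,u;v_n)\ =\ \phi(\tau,t,u),
\]
so $v_*$ minimizes $\Phi(\tau,t,u;\cdot)$. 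The inclusion $v_*\in\mathcal{D}(\mathcal{E})$ follows because $\phi(\tau,t,u)<\infty$ forces $\mathcal{E}(v_*)<\infty$.

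The one technical point that is not entirely routine is the sharp coercivity constant in the first step: a naïve triangle-inequality bound $\bd^2(u_*,v)\leq 2\bd^2(u_*,u)+2\bd^2(u,v)$ would only give existence for $\tau<\tau_*/2$, so the Young-type inequality with an arbitrary parameter $\varepsilon$ is what allows the full range $\tau\in(0,\tau_*)$ claimed in the statement. Everything else is a textbook application of the direct method, once (E1)--(E3) and (P1)--(P2) are in hand.
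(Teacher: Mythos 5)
Your proposal is correct and follows essentially the same route as the paper: the paper's proof invokes Lemma \ref{BoundsMY} for the lower bound and the coercivity estimate \eqref{UBoundMY}, and that lemma is proved by exactly the Young-type inequality $(a+b)^2\le(1+\eps)a^2+(1+\tfrac1\eps)b^2$ you use (with the explicit choice $\eps=\tfrac{\tau_*-\tau}{\tau_*+\tau}$), after which both arguments proceed identically via $\bd$-boundedness of the minimizing sequence, boundedness of $\ent(v_n)$, $\bs$-compactness from (E3), and lower semicontinuity from (E1), (P1) and the compatibility of $\bs$ with $\bd$. Your remark that a crude triangle-inequality bound would only cover $\tau<\tau_*/2$ correctly identifies why the sharper splitting is needed for the full range $\tau\in(0,\tau_*)$.
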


	\begin{proof}
		Fix $ \tau \in \left(0, \tau_* \right), t \in \left[0, \infty \right), u \in \X$ and note that by Lemma \ref{BoundsMY} the Moreau-Yosida functional is bounded from below for each $u \in \X$. Since $\mathcal{E}$ and $\mathcal{P}_t$ are proper, the infimum is not equal to infinity. So choose a minimizing sequence $\left(v_n \right)_\ninN$ in $\X$ of $\Phi(\tau,t,u; \cdot)$ and without loss of generality  $\sup_n \Phi(\tau,t,u; v_n) < \infty $. So, we can deduce from  \eqref{UBoundMY} 
		\begin{align*}
				\bd^2(v_n,u) & \leq \frac{4\tau \tau_*}{\tau_* - \tau} \left( \Phi(\tau,t,u;v_n) - c_*  + \frac{1}{\tau_* - \tau} \bd^2(u_*,u) \right) < \infty.
		\end{align*}
		Thus the sequence $\left(v_n\right)_\ninN$ is $\bd$-bounded. Furthermore, the $\bs$-compactness of the sequence $v_n$ follows by the upper estimate on $\mathcal{E}$
				\begin{align*}
			\mathcal{E}(v_n) & \leq \frac{1}{2\tau}\bd^2(u,v_n)  + \mathcal{E}(v_n) + \mathcal{P}_t(v_n) - c^* =  \Phi(\tau,t,u;v_n) - c^* \leq c  < \infty.
		\end{align*}
		Hence, we can extract a $\bs$-convergent subsequence, which converges to some $v_* \in \mathcal{D}(\mathcal{E})$ with respect to the weak topology $\bs$. By lower semi-$\bs$-continuity of $\mathcal{E}$ and $\bs$-continuity of $\mathcal{P}_t$, we conclude that indeed $v_*$ is a minimizer of $\Phi(\tau,t,u; \cdot)$ and thus $J_{\tau,t}(u) \neq \emptyset $.	
	\end{proof}


\subsection{Minimizing Movement scheme}
In this section we prove that the approximate solution given via the \emph{Minimizing Movement scheme} converges to a curve of steepest descent of the functional $\mathcal{E} + \mathcal{P}_t$. To do so, we establish some classical estimates for the \emph{piecewise constant interpolation}, which guarantees the convergence at least for a subsequence.


\begin{thm}[Classical estimates I] \label{BoundsMMS}
Let $u_0 \in \mathcal{D}\left(\mathcal{E}\right)$. For fixed $T>0$, there exists a constant $C(T,\tau_*,u_0)$, only depending on $T, \tau_*$ and $u_0$, such that for all partitions  $\mathcal{T}$ with $\btau$ sufficiently small, i.e.,
	\begin{equation*}
	\sup_k 4 \alpha^k_\btau  < 1 \qquad \mathrm{with} \qquad \alpha^k_\btau:=  \left( \frac{\tau_*}{2} \int_{t^k_\btau}^{t^{k+1}_\btau} \alpha (r) \dr   +\frac{ \tau_k}{\tau_*} \right),
	\end{equation*}
such that the corresponding discrete solutions $\left( \uk \right)_\kinN$ satisfy for all $N$ with $t^N_\btau < T$:
		\begin{align}
		\sum_{k=1}^N \frac{1}{2\tau_k} \dk & \leq  C(T,\tau_*,u_0) , \label{boundsumdk} \\
		\mathcal{E}(\ukn) & \leq C(T,\tau_*,u_0), \label{BoundFk} \\
		\bd^2(u_*,\ukn) & \leq C(T,\tau_*,u_0) \label{bounddn}.
		\end{align}
\end{thm}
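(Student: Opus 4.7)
The starting point will be the one-step inequality obtained by comparing the minimizer $\uk$ of $\Phi(\tau_k, t^k_\btau, \ukm;\cdot)$ with the admissible competitor $\ukm$; this yields
\[
\tfrac{1}{2\tau_k}\bd^2(\uk,\ukm) + \ek + \mathcal{P}_{t^k_\btau}(\uk) \le \ekm + \mathcal{P}_{t^k_\btau}(\ukm).
\]
The right-hand side is evaluated at $t^k_\btau$ rather than at $t^{k-1}_\btau$; assumption (P3) lets me absorb the time shift at the cost of an error of order $A_k(1+\bd^2(u_*,\ukm))$, where $A_k:=\int_{t^{k-1}_\btau}^{t^k_\btau}\alpha(r)\dr$. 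Telescoping from $k=1$ to $N$ then leaves
\[
\sum_{k=1}^N\tfrac{1}{2\tau_k}\dk + \ekn + \mathcal{P}_{t^N_\btau}(\ukn) \le \mathcal{E}(u_0)+\mathcal{P}_0(u_0) + \sum_{k=1}^N A_k\bigl(1+\bd^2(u_*,\ukm)\bigr).
\]

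Next I will exploit the lower bound on the Moreau--Yosida functional recorded in the preceding Remark (and proved in the Appendix as the Lemma \textit{BoundsMY}), namely $\mathcal{E}(v)+\mathcal{P}_t(v)\ge c_*-\tfrac{1}{2\tau_*}\bd^2(u_*,v)$. Substituting it for $\ekn+\mathcal{P}_{t^N_\btau}(\ukn)$ produces a single master inequality relating the three unknowns $S_N:=\sum_{k\le N}\tfrac{1}{2\tau_k}\dk$, $D_N:=\bd^2(u_*,\ukn)$ and $\ekn$. To propagate $D_k$ I use the sharp triangle--Young estimate with parameter $\epsilon_k=\tau_k/(\tau_*-\tau_k)$, which gives
\[
\bd^2(u_*,\uk)\le \tfrac{\tau_*}{\tau_*-\tau_k}\bd^2(u_*,\ukm) + \tfrac{\tau_*}{\tau_k}\dk.
\]
Plugged into the master inequality, this creates a coupled discrete recursion in which the coefficients of $D_{k-1}$ and of $\dk/\tau_k$ are controlled precisely by the quantity $\alpha^k_\btau=\tfrac{\tau_*}{2}\int_{t^k_\btau}^{t^{k+1}_\btau}\alpha(r)\dr+\tau_k/\tau_*$ featuring in the smallness hypothesis. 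The condition $4\alpha^k_\btau<1$ is exactly what I need to absorb the $S_N$- and $D_N$-terms onto the left-hand side on each step without degrading the constant.

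Once that absorption is done, a standard discrete Gronwall argument closes the system: the effective growth factor accumulates as $\prod_{k\le N}(1+cA_k)\le \exp(c\int_0^T\alpha(r)\dr)$, which is finite by the $L^1_{\mathrm{loc}}$ hypothesis on $\alpha$. This simultaneously furnishes the uniform bound \eqref{bounddn} on $D_N$ and the uniform bound \eqref{boundsumdk} on $S_N$; re-inserting both into the telescoped inequality and using the lower bound on $\mathcal{P}_{t^N_\btau}(\ukn)$ once more yields the remaining bound \eqref{BoundFk} on $\ekn$, with the constant $C(T,\tau_*,u_0)$ depending only on $T$, $\tau_*$, $\mathcal{E}(u_0)+\mathcal{P}_0(u_0)$ and $\bd(u_*,u_0)$.

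The main technical obstacle I expect is precisely the simultaneous treatment of the three coupled quantities $(S_N,\ekn,D_N)$, each of which appears on both sides of its own estimate. The delicate point is the calibration of the Young parameter $\epsilon_k$ together with the exploitation of the smallness $4\alpha^k_\btau<1$: the constants multiplying $D_{k-1}$ and $\dk/\tau_k$ after absorption must remain uniformly controllable as $N\to\infty$ with $t^N_\btau<T$, which is what ultimately allows the Gronwall iteration to terminate with a bound that is independent of the partition $\btau$ (subject only to the smallness assumption).
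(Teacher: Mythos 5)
Your overall architecture --- the one-step minimality inequality against the competitor $\ukm$, telescoping, assumption (P3) to handle the time shift, the coercivity bound $\mathcal{E}(v)+\mathcal{P}_t(v)\ge c_*-\frac{1}{2\tau_*}\bd^2(u_*,v)$, a Young-type propagation of $\bd^2(u_*,\uk)$, absorption under $\sup_k 4\alpha^k_\btau<1$, and the discrete Gronwall lemma --- is exactly the one used in the paper. The gap lies in the calibration of the propagation step, and as stated it is fatal. Write $S_N:=\sum_{k\le N}\frac{1}{2\tau_k}\dk$ and $D_k:=\bd^2(u_*,\uk)$. Your estimate $D_k\le\frac{\tau_*}{\tau_*-\tau_k}D_{k-1}+\frac{\tau_*}{\tau_k}\dk$ puts the weight $\frac{\tau_*}{\tau_k}=2\tau_*\cdot\frac{1}{2\tau_k}$ on the increment $\dk$. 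Iterating it gives $D_N\le P_N\bigl(D_0+2\tau_*S_N\bigr)$ with $P_N=\prod_{k\le N}\frac{\tau_*}{\tau_*-\tau_k}\ge 1$ (in fact $P_N\sim e^{T/\tau_*}$), so the coercivity term $\frac{1}{2\tau_*}D_N$ in your master inequality contributes at least $P_N\,S_N\ge S_N$ to the right-hand side. The coefficient of $S_N$ is therefore $\ge 1$ no matter how small $\btau$ is, and the absorption cannot be performed; moreover, neither the weight $\frac{\tau_*}{\tau_k}$ (which blows up as $\tau_k\to0$) nor the product $P_N$ is controlled by $\alpha^k_\btau$, which only bounds the \emph{summable} increments $\frac{\tau_*}{2}\int_{t^k_\btau}^{t^{k+1}_\btau}\alpha(r)\dr$ and $\frac{\tau_k}{\tau_*}$.

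The paper closes the loop with a sharper propagation that avoids any multiplicative accumulation: telescope the half-squares exactly,
\begin{align*}
\tfrac12\dksn-\tfrac12\dksz=\sum_{k=1}^N\Bigl(\tfrac12\dks-\tfrac12\dksm\Bigr)\le\sum_{k=1}^N\bd(\uk,\ukm)\,\bd(u_*,\uk),
\end{align*}
and then apply Young with the \emph{fixed} weight $\tau_*/2$, bounding the right-hand side by $\frac{\tau_*}{2}S_N+\frac{1}{\tau_*}\sum_{k\le N}\tau_k D_k$. Substituting the master bound for $S_N$ then produces $D_N$ with coefficient $\frac{\tau_*}{2}\cdot\frac{1}{2\tau_*}=\frac14<\frac12$, while the coefficients of the $D_k$ are precisely $\alpha^k_\btau$; the hypothesis $\sup_k4\alpha^k_\btau<1$ absorbs the top term and the discrete Gronwall lemma yields \eqref{bounddn}, after which back-substitution gives \eqref{boundsumdk} and \eqref{BoundFk} exactly as you describe. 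If you replace your squared-triangle/Young recursion by this telescoped product estimate, the rest of your argument goes through.
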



The proof of these bounds can be found in the appendix. The convergence of the piecewise constant interpolation is now guaranteed by these bounds and a refined version of the Arzelà-Ascoli theorem, which can be found in \cite[Proposition 3.3.1]{ags}.

\begin{thm}[Convergence of the piecewise constant interpolation] \label{ExistenceMMSLimitcurvePC}
Given a family of partitions $\left(\mathcal{T}_n \right)_\ninN$, with $\btau_n$ sufficiently small and $\btau_n \searrow 0$. For $u_0 \in \mathcal{D}(\mathcal{E})$ define the corresponding piecewise constant interpolation $\bar{u}_{\btau_n}$. Then there exists $u_* \in \mathrm{AC}^2 \left(\left[0, \infty \right), \X \right)$ such that for a non-relabelled subsequence of $\btau_n$ 
		\begin{equation*}
			\bar{u}_{\btau_n} (t) \sar u_* (t) \qquad \forall \ t\in \left[0,\infty \right).
		\end{equation*}
		\end{thm}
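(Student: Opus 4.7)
The plan is to combine the a priori bounds from Theorem \ref{BoundsMMS} with the refined Arzel\`a--Ascoli principle cited from \cite{ags}. Concretely, I will extract (i) pointwise $\bs$-compactness of the discrete trajectories, (ii) a quantitative equi-continuity in the metric $\bd$ modulo the mesh size, and (iii) an $L^2$-bound on a discrete metric derivative that survives the limit and yields membership in $\mathrm{AC}^2$.

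First I would establish the quantitative equi-continuity. For a fixed partition $\btau$, set the piecewise-constant discrete speed $|\bar u_\btau'|(t):=\bd(\uk,\ukm)/\tau_k$ for $t\in[\tkm,\tk)$. By Cauchy--Schwarz and \eqref{boundsumdk},
\begin{equation*}
\int_0^T |\bar u_\btau'|^2(t)\dt \;=\; \sum_{k=1}^{N}\frac{\bd^2(\uk,\ukm)}{\tau_k} \;\le\; 2C(T,\tau_*,u_0),
\end{equation*}
so $|\bar u_\btau'|$ is bounded in $L^2(0,T)$ uniformly in $\btau$. For $0\le s<t\le T$, if $s\in[t^{k}_\btau,t^{k+1}_\btau)$ and $t\in[t^{l}_\btau,t^{l+1}_\btau)$ with $k\le l$, the triangle inequality gives
\begin{equation*}
\bd(\bar u_\btau(s),\bar u_\btau(t)) \;\le\; \sum_{j=k+1}^{l}\bd(u^j_\btau,u^{j-1}_\btau) \;\le\;\Bigl(\sum_{j=k+1}^{l}\tau_j\Bigr)^{1/2}\Bigl(\sum_{j=k+1}^{l}\tfrac{\bd^2(u^j_\btau,u^{j-1}_\btau)}{\tau_j}\Bigr)^{1/2}\;\le\;\sqrt{2C}\,\sqrt{t-s+|\btau|},
\end{equation*}
where $|\btau|:=\sup_k\tau_k$. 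This is the equi-continuity modulus needed below.

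Next I would set up the compactness ingredient. Fix $t\in[0,T]$ and let $k=k(t,\btau)$ be the index with $\bar u_\btau(t)=u^k_\btau$. Using \eqref{BoundFk} and \eqref{bounddn}, the set $\{\bar u_{\btau_n}(t):n\in\N\}$ is contained in a $\bd$-bounded sublevel of $\mathcal{E}$, hence by assumption (E3) it is relatively $\bs$-sequentially compact. With $u_*\in\X$ from (E2) one checks via \eqref{bounddn} that $K:=\bigcup_{n,t\in[0,T]}\{\bar u_{\btau_n}(t)\}$ is $\bd$-bounded, and for each fixed $t$ its ``slice'' is $\bs$-relatively compact. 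Combined with the modulus of continuity derived above (where the error $|\btau_n|$ tends to zero), the hypotheses of the refined Arzel\`a--Ascoli theorem \cite[Prop.~3.3.1]{ags} are met; a standard diagonal extraction along an exhausting sequence $T_m\to\infty$ then produces a non-relabelled subsequence and a limit curve $u_*:[0,\infty)\to\X$ with $\bar u_{\btau_n}(t)\sar u_*(t)$ for every $t\ge 0$.

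Finally I would show $u_*\in\mathrm{AC}^2([0,\infty);\X)$. The $L^2$-bound above yields, up to a further subsequence, $|\bar u_{\btau_n}'|\rightharpoonup m$ weakly in $L^2(0,T)$ for some nonnegative $m$. Passing to the liminf in the inequality $\bd(\bar u_{\btau_n}(s),\bar u_{\btau_n}(t))\le \int_s^t|\bar u_{\btau_n}'|(r)\dr+o(1)$ and using the lower semicontinuity of $\bd$ along $\bs$-convergence (compatibility of $\bs$ with $\bd$) gives
\begin{equation*}
\bd(u_*(s),u_*(t))\;\le\;\int_s^t m(r)\dr\qquad\text{for all }0\le s\le t\le T,
\end{equation*}
which by definition certifies $u_*\in\mathrm{AC}^2([0,T];\X)$, and by diagonalization $u_*\in\mathrm{AC}^2([0,\infty);\X)$.

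The main obstacle I anticipate is the mixing of the two topologies: equi-continuity is a statement in $\bd$, while pointwise compactness and the conclusion are in the weaker $\bs$. The refined Arzel\`a--Ascoli variant of \cite{ags} is tailored exactly to this mismatch, so the work consists in verifying that the $\bd$-moduli derived from \eqref{boundsumdk} together with the $\bs$-relative compactness supplied by (E3) and \eqref{BoundFk}--\eqref{bounddn} feed into its hypotheses; the remaining bookkeeping (choice of subsequences, diagonal extraction in $T$, and $L^2$ weak convergence of the discrete speeds) is routine.
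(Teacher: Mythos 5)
Your proposal is correct and follows essentially the same route as the paper: the uniform $L^2$-bound on the discrete speed from \eqref{boundsumdk}, the $\bs$-compactness of the trajectories via (E3) together with \eqref{BoundFk} and \eqref{bounddn}, the refined Arzel\`a--Ascoli theorem of \cite[Proposition 3.3.1]{ags}, and a diagonal extraction in $T$. The only cosmetic difference is that you phrase the equi-continuity through the explicit H\"older-type modulus $\sqrt{2C}\sqrt{t-s+|\btau|}$, whereas the paper passes directly to the weak $L^2$-limit $A$ of the discrete speeds and uses $\int_s^t A(r)\dr$ as the modulus; both are equally valid inputs to the compactness argument.
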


\begin{proof}
Fix some $T>0$ and define the discrete derivative $ | \bar{u}_{\btau_n}' |$ as
	\begin{equation}
		 | \bar{u}_{\btau_n} ' |(t):= \frac{1}{\tau_{n,k}} \bd(u^k_{\btau_n},u^{k+1}_{\btau_n}) \qquad \mathrm{for} \quad t \in \left[ t^k_{\btau_n}, t^{k+1}_{\btau_n}\right). \label{discrete derivative}
	\end{equation}
Using the classical estimates for the Minimizing Movement scheme of Theorem \ref{BoundsMMS},  we get for all $t^N_{\btau_n} <T$
	\begin{align*}
		\int_0^{t^N_{\btau_n}} | \bar{u}_{\btau_n} ' |^2 (t) \dt  = \sum_{k=0}^{N-1} \int_{t^k_{\btau_n}}^{t^{k+1}_{\btau_n}} \left( \frac{1}{\tau_{n,k}} \bd(u^k_{\btau_n},u^{k+1}_{\btau_n})  \right)^2 \dt =  \sum_{k=1}^N  \frac{1}{\tau_{n,k}} \bd^2(u^k_{\btau_n},u^{k-1}_{\btau_n})  \leq 2 C(T).
	\end{align*}
Thus $ | \bar{u}_{\btau_n} ' | \in L^2\left(0,T\right)$ and the ${L^2\left(0,T\right)}$-norm of $ | \bar{u}_{\btau_n} ' |$ is  uniformly bounded in $\btau_n$. Therefore $| \bar{u}_{\btau_n}' |$ possesses a $L^2\left(0,T\right)$-weakly convergent non-relabelled subsequence with limit $A \in L^2\left(0,T\right)$. Choose $0 \leq s \leq t \leq T$ arbitrary and define $k_n(t):= \max \left\{ k \mid t^k_{\btau_n} \leq t \right\}$, then
	\begin{align*}
 \bd(\bar{u}_{\btau_n}(s),\bar{u}_{\btau_n}(t))  \leq \sum_{k=k_n(s) +1}^{k_n(t)} \bd(u_{\btau_n}^k,u_{\btau_n}^{k-1}) = \sum_{k=k_n(s) +1}^{k_n(t)} \int_{t^{k-1}_{\btau_n}}^{t^k_{\btau_n}} \frac{1}{\tau_{n,k}} \bd(u_{\btau_n}^k,u_{\btau_n}^{k-1}) \dr  = \int_{t^{k_n(s)}_{\btau_n}}^{t^{k_n(t)}_{\btau_n}} | \bar{u}_{\btau_n}' |(r) \dr.
	\end{align*}
Taking the limit $\ntoinf$ yields now 
	\begin{equation*}
		\limsup_{k \rightarrow \infty} \bd(\bar{u}_{\btau_n}(s),\bar{u}_{\btau_n}(t)) \leq \int_s^t A(r) \dr.
	\end{equation*}
On the other hand, by the classical estimate \eqref{BoundFk} for the Minimizing Movement scheme, the piecewise constant interpolation $\bar{u}_{\btau_n} (t)$ is contained in some sublevel of $\mathcal{E}$ uniformly in $t\in \left[0,T\right]$ and for sufficiently small $\btau$. Estimate \eqref{bounddn} additionally ensures the uniform $\bd$-boundedness of $\bar{u}_{\btau_n} (t)$ and therefore, using the $\bs$-compactness of $\mathcal{E}$, $\bar{u}_{\btau_n} (t)$ is contained in some $\bs$-compact set $K$ for all $t\in \left[0,T\right]$ and for all $\ninN$.

Hence we can apply the refined Arzelà-Ascoli Theorem \cite[Proposition 3.3.1]{ags} yielding the existence of a non-relabelled subsequence and a limit curve $u_*: \left[0,T \right] \rightarrow \X$ such that $\bar{u}_{\btau_n} (t) \sar u_*(t)$ for each fixed $t\in\left[0,T\right]$. Moreover, $u_*$ is absolutely continuous and consequently, a diagonal argument yields convergence w.r.t. $\bs$ on $\left[0,\infty\right)$.
\end{proof}


	 Now, we define a refined interpolation, the so called \emph{De Giorgi interpolation} $\tilde{u}_{\tau}: \left[0,\infty \right) \rightarrow \X$, via
	\begin{equation*}
		\tilde{u}_{\btau} (t^k_\btau ) = \uk, \qquad \tilde{u}_{\btau} (t^k_\btau +\sigma) \in J_{\sigma, t^k_\btau +\sigma }(\uk) \qquad \mathrm{for} \ \sigma \in \left(0,\tau_{k+1} \right) , \ \kinN_0,
	\end{equation*}
	 which satisfies the following \emph{discrete energy inequality}.  

\begin{thm}[Discrete energy inequality]\label{DiscIntInequMMs}
 For $u_0 \in \mathcal{D}\left(\mathcal{E}\right)$ and a given partition $\mathcal{T}$ with $\btau\in \left( 0 , \tau_* \right)$, let  $ \left( \uk \right)_\kinN$ be the discrete solution. Then the De Giorgi interpolation $\tilde{u}_\btau(t)$ satisfies:
		\begin{align}\label{DiscIntInequMMsInequ} 
			\begin{split}
& \mathcal{E}(\tilde{u}_\btau(t^N_\btau)) + \mathcal{P}_{t^N_\btau} (\tilde{u}_\btau(t^N_\btau))  +	\sum_{k=1}^{N } \frac{1}{2\tau_k} \dk + \frac{1}{2}  \int_0^{t^N_\btau} |  \partial\left( \mathcal{E} + \mathcal{P}_{t} \right) |^2 ( \tilde{u}_\btau (t) ) \dt  \\
 \leq & \mathcal{E}(u_0) + \mathcal{P}_{0} (u_0) + \int_0^{t^N_\btau} \partial_t \mathcal{P}_{t}(\tilde{u}_\btau (t)) \dt. \end{split}
	\end{align}

\end{thm}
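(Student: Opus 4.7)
The strategy is the classical De Giorgi variational interpolation argument, adapted to the time-dependent setting: I would first establish a one-step energy estimate on each interval $[\tkm, \tk]$, using that $\tilde u_\btau(\tkm+\sigma)$ is by construction a minimizer of $\Phi(\sigma, \tkm+\sigma, \ukm; \cdot)$, and then sum telescopically over $k=1,\ldots,N$.

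For fixed $k$ and $\sigma\in(0,\tau_k]$, write
$$\phi(\sigma,\tkm+\sigma,\ukm) = \frac{1}{2\sigma}\bd^2(\ukm,\tilde u_\btau(\tkm+\sigma)) + \mathcal{E}(\tilde u_\btau(\tkm+\sigma)) + \mathcal{P}_{\tkm+\sigma}(\tilde u_\btau(\tkm+\sigma)).$$
The key ingredient is an envelope-type differentiation formula for $\phi$ along the diagonal $\sigma\mapsto(\sigma,\tkm+\sigma)$. Invoking the properties of the Moreau--Yosida approximation collected in the Appendix (which rest on the regularity Assumptions \ref{DefRegE} and \ref{DefRegP}, in particular (P3)--(P4)), one obtains for a.e.\ $\sigma\in(0,\tau_k)$ that
$$\frac{d}{d\sigma}\phi(\sigma,\tkm+\sigma,\ukm) = -\frac{1}{2\sigma^2}\bd^2(\ukm,\tilde u_\btau(\tkm+\sigma)) + \partial_t\mathcal{P}_{\tkm+\sigma}(\tilde u_\btau(\tkm+\sigma)),$$
because the minimality of $\tilde u_\btau(\tkm+\sigma)$ kills the derivative contribution from the moving interior variable. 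Next I would exploit the variational inequality at a minimizer $v=\tilde u_\btau(\tkm+\sigma)$ of $\Phi(\sigma,\tkm+\sigma,\ukm;\cdot)$: comparing $v$ with arbitrary test points $w$ and sending $w\to v$ in the definition of the local slope yields
$$|\partial(\mathcal{E}+\mathcal{P}_{\tkm+\sigma})|(v)\leq \frac{1}{\sigma}\bd(\ukm,v),$$
so that $-\frac{1}{2\sigma^2}\bd^2(\ukm,v)\leq -\frac{1}{2}|\partial(\mathcal{E}+\mathcal{P}_{\tkm+\sigma})|^2(v)$.

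Integrating the resulting differential inequality from $0^+$ to $\tau_k$, using $\lim_{\sigma\searrow 0}\phi(\sigma,\tkm,\ukm) = \mathcal{E}(\ukm)+\mathcal{P}_{\tkm}(\ukm)$ (which follows from \eqref{MYmonotonicity} and the lower semicontinuity properties of $\mathcal{E}+\mathcal{P}_t$) and the identity $\phi(\tau_k,\tk,\ukm)=\frac{1}{2\tau_k}\dk+\mathcal{E}(\uk)+\mathcal{P}_{\tk}(\uk)$, and changing variables $t=\tkm+\sigma$, one arrives at the per-step inequality
\begin{align*}
\mathcal{E}(\uk)+\mathcal{P}_{\tk}(\uk)&+\frac{1}{2\tau_k}\dk+\frac{1}{2}\int_{\tkm}^{\tk}|\partial(\mathcal{E}+\mathcal{P}_t)|^2(\tilde u_\btau(t))\dt \\
&\leq \mathcal{E}(\ukm)+\mathcal{P}_{\tkm}(\ukm)+\int_{\tkm}^{\tk}\partial_t\mathcal{P}_t(\tilde u_\btau(t))\dt.
\end{align*}
Summing from $k=1$ to $N$ produces a telescoping cancellation in the energy terms and yields exactly \eqref{DiscIntInequMMsInequ}.

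\textbf{Main obstacle.} The most delicate point is the envelope differentiation identity for $\sigma\mapsto \phi(\sigma,\tkm+\sigma,\ukm)$: since minimizers need not be unique and no $\lambda$-convexity is assumed, one cannot rely on the Hilbert-space-type calculus of \cite{ags,fg} verbatim. Concavity of $\tau\mapsto\tau\cdot\phi(\tau,t,u)$ (a Legendre-transform structure) provides differentiability almost everywhere in $\sigma$, while (P3)--(P4) control the $t$-dependence and in particular ensure that the bad set $\mathcal{N}_\alpha$ does not interfere with the fundamental theorem of calculus. Measurability of $\sigma\mapsto|\partial(\mathcal{E}+\mathcal{P}_{\tkm+\sigma})|^2(\tilde u_\btau(\tkm+\sigma))$ is inherited from the lower semicontinuity of the slope posited in Assumption \ref{DefChainRule}, together with the piecewise structure of the De Giorgi interpolant.
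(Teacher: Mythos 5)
Your proposal is correct and follows essentially the same route as the paper: the diagonal differentiation formula for $\sigma\mapsto\phi(\sigma,t+\sigma,u)$ (the paper's Lemma \ref{DiffofPhi}), the slope estimate $|\partial(\mathcal{E}+\mathcal{P}_t)|(v)\le\frac{1}{\tau}\bd(u,v)$ at the De Giorgi minimizer (Lemma \ref{EstimateLocalSlope}), integration over the step using the approximation property of $\phi$ at $\sigma=0^+$ (Lemma \ref{MYApproxProp}), and telescoping summation. The only cosmetic difference is that the paper justifies a.e.\ differentiability via direct two-sided Lipschitz estimates on $\phi$ rather than through the concavity/Legendre-transform structure you mention, but this does not change the argument.
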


This proof is similar to the proof in \cite{fg} and can be found in the Appendix for the sake of completeness. Next we prove that also the De Giorgi interpolation $\tilde{u}_{\btau_n}$ converges pointwise with respect to the weak topology $\bs$ to some limit curve $\tilde{u}_*(t)$. In the end, we identify the new limit curve with limit curve $u_*$ obtained previously:

\begin{thm}[Convergence of the De Giorgi interpolation] \label{ExistenceMMSLimitcurveDG}
Given a family of partitions $\left(\mathcal{T}_n \right)_\ninN$, with $\btau_n$ sufficiently small and $\btau_n \searrow 0$. For $u_0 \in \mathcal{D}(\mathcal{E})$ define the corresponding De Giorgi interpolation  $\tilde{u}_{\btau_n}$ and let $u_*(t)$ be the limit curve of the piecewise constant interpolation $\bar{u}_n$, then there exists a non-relabelled subsequence of $\tilde{u}_{\btau_n}$such that
		\begin{equation}
\tilde{u}_{\btau_n} (t) \sar u_* (t) \qquad \forall \ t \in \left[0,\infty \right)\backslash \mathcal{N}_\alpha . \label{ExistenceMMSLimitDG}
		\end{equation}
		\end{thm}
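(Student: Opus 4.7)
My plan is to adapt the classical comparison argument of \cite{ags} between the De Giorgi and the piecewise constant interpolations to the non-autonomous setting. The idea is to show that, at every fixed $t \notin \mathcal{N}_\alpha$, the two interpolations $\tilde{u}_{\btau_n}(t)$ and $\bar{u}_{\btau_n}(t)$ are $\bd$-asymptotic as $\btau_n \searrow 0$. Since $\bar{u}_{\btau_n}(t)\sar u_*(t)$ is already known (Theorem \ref{ExistenceMMSLimitcurvePC}) and $\bs$ is compatible with $\bd$, this will force $\tilde{u}_{\btau_n}(t)$ to converge to the very same limit.

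For each $n$, I would pick the index $k_n$ such that $t \in [t^{k_n}_{\btau_n}, t^{k_n+1}_{\btau_n})$ and set $\sigma_n := t - t^{k_n}_{\btau_n} \in [0,\tau_{n,k_n+1})$. The defining minimality $\tilde{u}_{\btau_n}(t) \in J_{\sigma_n, t}(u^{k_n}_{\btau_n})$, tested against $v = u^{k_n}_{\btau_n}$, yields the key inequality
$$\frac{1}{2\sigma_n}\bd^2\bigl(u^{k_n}_{\btau_n}, \tilde{u}_{\btau_n}(t)\bigr) + \mathcal{E}(\tilde{u}_{\btau_n}(t)) + \mathcal{P}_t(\tilde{u}_{\btau_n}(t)) \le \mathcal{E}(u^{k_n}_{\btau_n}) + \mathcal{P}_t(u^{k_n}_{\btau_n}).$$
Combining this with the classical a priori bounds from Theorem \ref{BoundsMMS}, the lower bound on $\mathcal{E}+\mathcal{P}_t$ provided by (E2) and (P2), and the control (P3) used to pass from $\mathcal{P}_{t^{k_n}_{\btau_n}}(u^{k_n}_{\btau_n})$ (which is bounded by the classical estimates) to $\mathcal{P}_t(u^{k_n}_{\btau_n})$, I would conclude that $\bd(u_*, \tilde{u}_{\btau_n}(t))$ and $\mathcal{E}(\tilde{u}_{\btau_n}(t))$ are uniformly bounded in $n$. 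Assumption (E3) then places $\{\tilde{u}_{\btau_n}(t)\}_n$ in a $\bs$-compact subset of $\X$.

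Rearranging the same inequality gives
$$\bd^2\bigl(\bar{u}_{\btau_n}(t),\tilde{u}_{\btau_n}(t)\bigr) \le 2\sigma_n\Bigl[\mathcal{E}(u^{k_n}_{\btau_n}) + \mathcal{P}_t(u^{k_n}_{\btau_n}) - \mathcal{E}(\tilde{u}_{\btau_n}(t)) - \mathcal{P}_t(\tilde{u}_{\btau_n}(t))\Bigr].$$
Using the uniform bounds from the previous step to control the bracket and the fact that $\sigma_n \le \btau_n \to 0$, one obtains $\bd(\bar{u}_{\btau_n}(t),\tilde{u}_{\btau_n}(t)) \to 0$. Now let $\tilde{u}_{\btau_{n_j}}(t) \sar w$ be any $\bs$-convergent sub-subsequence (guaranteed by the compactness above); combining $\bar{u}_{\btau_{n_j}}(t) \sar u_*(t)$ with the vanishing distance and the $\bs$-$\bd$-compatibility applied to the joint convergence $(\tilde{u}_{\btau_{n_j}}(t), \bar{u}_{\btau_{n_j}}(t)) \sar (w, u_*(t))$, yields $\bd(w,u_*(t)) \le \liminf_j \bd(\tilde{u}_{\btau_{n_j}}(t),\bar{u}_{\btau_{n_j}}(t)) = 0$, i.e., $w = u_*(t)$. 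Uniqueness of the cluster point then upgrades the subsequential convergence to full convergence $\tilde{u}_{\btau_n}(t) \sar u_*(t)$.

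\textbf{Main obstacle.} The central subtlety lies in the non-autonomous control of the perturbation $\mathcal{P}$: without geodesic $\lambda$-convexity, the only tool to transfer bounds between $\mathcal{P}_{t^{k_n}_{\btau_n}}$ and $\mathcal{P}_t$ is the $L^1_{\mathrm{loc}}$-control from (P3), whose integrand $\alpha$ may have pathological local behavior. The set $\mathcal{N}_\alpha$ introduced in (P3) isolates precisely those times where the transfer fails to be uniform in $n$, so that the uniform bounds in the first step need not hold; the pointwise statement of the theorem must therefore exclude $\mathcal{N}_\alpha$. Verifying that for every $t \in [0,\infty) \setminus \mathcal{N}_\alpha$ the relevant $\alpha$-integrals over the mesh intervals containing $t$ stay bounded uniformly in $n$, and that this suffices to close the estimate, is the main technical adaptation distinguishing this argument from the autonomous one in \cite{ags}.
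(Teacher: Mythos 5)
Your proof is correct, and the distance estimate at its core is in fact the same one the paper uses in the \emph{first} half of its proof (testing the minimality of $\tilde u_{\btau_n}(t)\in J_{\sigma_n,t}(u^{k_n}_{\btau_n})$ against the trivial competitor $u^{k_n}_{\btau_n}$ is exactly what underlies \eqref{UBoundMY} combined with the monotonicity \eqref{MYmonotonicity}). Where you genuinely diverge is in the conclusion: the paper derives a second, $\alpha$-dependent distance bound via Lemma \ref{EstimateMinimizers} (comparing the two resolvent elements $\tilde u_{\btau_n}(t)$ and $u^{k_n+1}_{\btau_n}$), uses it to get an equi-continuity estimate, invokes the refined Arzel\`a--Ascoli theorem to produce a limit curve $\tilde u_*$, and only then identifies $\tilde u_*=u_*$; you instead go directly from $\bd^2(\tilde u_{\btau_n}(t),\bar u_{\btau_n}(t))\le C\sigma_n\le C\btau_n\to 0$, $\bs$-compactness of the family at fixed $t$, and the $\bs$--$\bd$ compatibility to a sub-subsequence/uniqueness-of-cluster-point argument. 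Your route is shorter, avoids Lemma \ref{EstimateMinimizers} entirely, and — notably — proves the convergence for \emph{every} $t\ge 0$, not merely for $t\notin\mathcal{N}_\alpha$; the exceptional set only enters through the factor $\sigma/(\tau-\sigma)$ in Lemma \ref{EstimateMinimizers}, which your argument never uses. For that reason your concluding paragraph on the "main obstacle" is slightly misplaced: in your own argument the transfer from $\mathcal{P}_{t^{k_n}_{\btau_n}}$ to $\mathcal{P}_t$ costs only $\int_{t-\btau_n}^{t}\alpha(r)\dr(1+\bd^2(u_*,u^{k_n}_{\btau_n}))$, which is controlled for any $t$ by $\alpha\in L^1_{\mathrm{loc}}$ and \eqref{bounddn}, so no exclusion of $\mathcal{N}_\alpha$ is needed. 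What the Arzel\`a--Ascoli route buys the paper is a limit curve defined simultaneously for all $t$ with an explicit modulus of continuity, but since that curve is identified with the already-constructed $u_*$ anyway, nothing essential is lost in your version.
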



\begin{proof}


 As before, we prove that the family of De Giorgi interpolations $\tilde{u}_{\btau_n}$ is contained in some $\bs$-compact set $\tilde{K}$. The $\bd$-boundedness of $\tilde{u}_{\btau_n}$ follows by \eqref{UBoundMY}, since for $t= t^{k_n(t)}_{\btau_n} + \sigma$
\begin{align*}
	\bd^2(\tilde{u}_{\btau_n}  (t) ,\bar{u}_{\btau_n} (t) )= \bd^2(\tilde{u}_{\btau_n}  (t) , u_{\btau_n}^{k_n(t)} )& \leq \frac{4\sigma \tau_*}{\tau_* - \sigma } \left( \Phi(\sigma,t,u_{\btau_n}^{k_n(t)},\tilde{u}_{\btau_n} (t)) - c_*  + \frac{1}{\tau_* - \sigma} \bd^2(u_*,u_{\btau_n}^{k_n(t)}) \right) \\
	& = \frac{4\sigma \tau_*}{\tau_* - \sigma } \left( \phi(\sigma,t,u_{\btau_n}^{k_n(t)}) - c_*  + \frac{1}{\tau_* - \sigma} \bd^2(u_*,u_{\btau_n}^{k_n(t)}) \right) \\
	& \leq \frac{4\sigma \tau_*}{\tau_* - \sigma } \left( \mathcal{E}( u_{\btau_n}^{k_n(t)}) + \mathcal{P}_t(u_{\btau_n}^{k_n(t)}) - c_*  + \frac{1}{\tau_* - \sigma} \bd^2(u_*,u_{\btau_n}^{k_n(t)}) \right).
\end{align*}
The first term on the right hand side is bounded by the constant given in Theorem \ref{BoundsMMS}. Since the discrete solution $u_{\btau_n}^k$ is locally contained in the $\bs$-compact set $K$, the second term is also bounded locally in time by some constant independent of $t$ and $\tau$. The third term is bounded by the classical estimates \eqref{bounddn}, hence the De Giorgi interpolation $\tilde{u}_{\btau_n}$ is locally $\bd$-bounded. 

Next we prove the boundedness of $\mathcal{E}(\tilde{u}_{\btau_n}(t))$, to do so, use the coercivity of $\mathcal{P}_t$ to obtain
	\begin{align*}
	\mathcal{E}(\tilde{u}_{\btau_n} (t) ) & \leq \frac{1}{2\sigma} \bd^2(u^{k_n(t)}_{\btau_n},\tilde{u}_{\btau_n} (t) ) + \mathcal{E}(\tilde{u}_{\btau_n} (t) ) + \frac{1}{2\tau_*} \bd^2(u_*,\tilde{u}_{\btau_n} (t)) + \mathcal{P}_{t}(\tilde{u}_{\btau_n} (t) ) - c_* \\
	& = \phi(\sigma, t , u^{k_n(t)}_{\btau_n} , \tilde{u}_{\btau_n} (t) ) + \frac{1}{2\tau_*} \bd^2(u_*,\tilde{u}_{\btau_n} (t)) - c_*\\
	& \leq \mathcal{E}(u^{k_n(t)}_{\btau_n}) + \mathcal{P}_t(u^{k_n(t)}_{\btau_n}) + \frac{1}{2\tau_*} \bd^2(u_*,\tilde{u}_{\btau_n} (t))   -c_* .
	\end{align*}
Again, the first two terms are bounded by the previous argument and the third term is bounded as shown before, hence the De Giorgi interpolations $\tilde{u}_{\btau_n}(t)$ are contained in a $\bs$-compact set $\tilde{K}$. 

To apply the refined Arzelà-Ascoli theorem \cite[Proposition 3.3.1]{ags}, it remains to prove an estimate in terms of the modulus of continuity. Note that by Lemma \ref{EstimateMinimizers}, one has 
	\begin{align*}
	 \bd^2(\bar{u}_{\btau_n}(t) , \tilde{u}_{\btau_n}(t) ) & \leq \bd^2(u^{k_n(t)}_{\btau_n}, u^{k_n(t)+1}_{\btau_n}) + 2\tau_{n,k}  \frac{t- t^{k_n(t)}_{\btau_n}}{t^{k_n(t)+1}_{\btau_n}-t} \int_t^{t^{k_n(t)+1}_{\btau_n}} \alpha(r) \dr \left( 2 +\bd^2( u_*,\tilde{u}_{\btau_n}(t)) + \bd^2( u_*,u^{k_n(t)}_{\btau_n}) \right) \\
	& \leq \btau_n C(T) \left( 1 +  \frac{t- t^{k_n(t)}_{\btau_n}}{t^{k_n(t)+1}_{\btau_n}-t} \int_t^{t^{k_n(t)+1}_{\btau_n}} \alpha(r) \dr \right).
	\end{align*}
	Here, we estimated the first term by \eqref{boundsumdk} and the last term using the $\bd$-boundedness of $\bar{u}_{\btau_n} (t)$ and $\tilde{u}_{\btau_n}(t)$. By the assumption on $\mathcal{P}_t$, the last term is bounded locally in time, uniformly in $\btau$, except on an at most countable set $\mathcal{N}_\alpha$. Therefore, we can deduce for $s,t \in \left[0,T\right] \backslash \mathcal{N}_\alpha$
\begin{align*}
		\limsup_{\ntoinf} \bd^2( \tilde{u}_{\btau_n} (t) , \tilde{u}_{\btau_n} (s) ) & \leq \limsup_{\ntoinf} 3 \left( \bd^2( \tilde{u}_{\btau_n}  (t) , \bar{u}_{\btau_n}  (t) ) + \bd^2( \bar{u}_{\btau_n}  (t) , \bar{u}_{\btau_n}  (s) ) +\bd^2( \bar{u}_{\btau_n}  (s) , \tilde{u}_{\btau_n}  (s) ) \right) \\
		& \leq  \limsup_{\ntoinf} 6\btau_n C(T)+ \limsup_{\ntoinf} 3  \bd^2( \bar{u}_{\btau_n}  (t) , \bar{u}_{\btau_n}  (s) )   \leq 3  \int_s^t A(r) \dr.
\end{align*}
	Hence, we can apply the refined version of the Arzelà-Ascoli theorem \cite[Proposition 3.3.1]{ags}, to conclude the pointwise convergence on $\left[0,T\right]$ with respect to the topology $\bs$ of the De Giorgi interpolation $\tilde{u}_\tau(t)$ to an absolutely continuous curve $\tilde{u}_*(t)$ for a non-relabelled subsequence. A diagonal argument yields the convergence with respect to $\bs$ on $\left[0,\infty\right)$ for a further non-relabelled subsequence. In particular, the two limit curves $u_*$ and $\tilde u_*$ have to agree since by the compatibility assumption to the weak topology $\bs$ one has at least on the set $\left[0, \infty \right) \backslash \mathcal{N}_\alpha$
	\begin{equation*}
 		\bd^2(\tilde{u}_*(t),u_*(t)) \leq \liminf_{\ktoinf} \bd^2(\tilde{u}_{n_k}(t),\bar{u}_{n_k}(t)) \leq \liminf_{k\to\infty}  \btau_{n_k}C(T)  =0 . \qedhere
\end{equation*}
\end{proof}


\subsection{Existence of curves of steepest descent}


Finally, we are able to complete the proof of Theorem \ref{thm:abstractgf}(a), i.e., the existence of \emph{curves of steepest descent}:


\noindent\emph{Proof of Theorem \ref{thm:abstractgf}(a)}
In the sequel, we prove that the limit curve $u_* (t)\in \mathrm{AC}^2 \left(\left[0, \infty \right) ,\X \right)$ given by the Minimizing Movement scheme starting at $u_0$ using the family $\mathcal{T}_n$ of feasible partitions --- without loss of generality $\bar{u}_{\btau_n} \sar u_*$ and $\tilde{u}_{\btau_n} \sar u_*$ --- is a curve of steepest descent for the functional $\mathcal{E} + \mathcal{P}_t$. We know that the De Giorgi interpolation satisfies the discrete energy inequality \eqref{DiscIntInequMMsInequ}, i.e.,
\begin{align*} 
\begin{split}
& \mathcal{E}(\tilde{u}_{\btau_n}(t^N_{\btau_n})) + \mathcal{P}_{t^N_{\btau_n}} (\tilde{u}_{\btau_n}(t^N_{\btau_n}))  +	\sum_{k=1}^{N } \frac{1}{2\tau_{n,k}} \bd^2( u^k_{\btau_n}, u^{k+1}_{\btau_n}) + \frac{1}{2}  \int_0^{t^N_\btau} |  \partial\left( \mathcal{E} + \mathcal{P}_{t} \right) |^2 ( \tilde{u}_{\btau_n} (t) ) \dt \\
 \leq & \mathcal{E}(u_0) + \mathcal{P}_{0} (u_0) + \int_0^{t^N_\btau} \partial_t \mathcal{P}_{t}(\tilde{u}_{\btau_n} (t)) \dt.
\end{split}
	\end{align*}
Fix $T \in \left[0,\infty \right)$ and compute the limes inferior of the l.h.s. of the equation above. Since $\tilde{u}_{\btau_n}(t^{k_n(T)}_{\btau_n}) = \bar{u}_{\btau_n}(T) \sar u_* (T)$, we have by the lower semi-$\bs$-continuity of $\mathcal{E}$ and the $\bs$-continuity of $\mathcal{P}_t$:
	\begin{equation*}
		\mathcal{E} (u_*(T)) + \mathcal{P}_t ( u_* (T))	\leq\liminf_{\ntoinf} \left(\mathcal{E} ( \bar{u}_{\btau_n}(T)) + \mathcal{P}_{t^N_{\btau_n}} ( \bar{u}_{\btau_n}(T))\right)  =  \liminf_{\ntoinf} \left(\mathcal{E}(\tilde{u}_{\btau_n}(t^{k_n(T)}_{\btau_n})) + \mathcal{P}_{t^N_{\btau_n}} (\tilde{u}_{\btau_n}(t^{k_n(T)}_{\btau_n}))\right).
	\end{equation*}
	In the proof of Theorem \ref{ExistenceMMSLimitcurvePC}, we have seen that the discrete derivative $| \bar{u}_{\btau_n} '| (t)$ converges weakly to $A(t)$ in $L^2\left(0,T\right)$, with $A(t)$ is one possible modulus of continuity in the definition of absolute continuity. Furthermore, since the metric derivative $ | u_* '| (t) $ is the smallest modulus of continuity, one has $| u_* '| (t) \leq A(t)$ almost everywhere. The weak lower semi-continuity of the $L^2 \left(0,T\right)$-norm implies then:
	\begin{equation*}
	\frac{1}{2}  \int_0^T |u_* ' |^2 (t) \dt \leq  \frac{1}{2} \int_0^T A(t)^2 \dt \leq \liminf_{\ntoinf} \frac{1}{2} \int_0^{t^{k_n(T)}_{\btau_n}} | \bar{u}_{\btau_n} '|^2 (t) \dt = \liminf_{\ntoinf} \sum_{k=1}^{k_n(T)} \frac{1}{2\tau_{n,k}} \bd^2( u^k_{\btau_n}, u^{k+1}_{\btau_n}) .
	\end{equation*}	
	The De Giorgi interpolation converges weakly almost everywhere in $t$, so using Fatou's lemma and the lower semi-$\bs$-continuity of the local slope $| \partial (\mathcal{E} + \mathcal{P}_t )|$ yields for the last term on the l.h.s.
	\begin{align*}
		\frac{1}{2} \int_0^T | \partial \left(\mathcal{E} + \mathcal{P}_t \right)|^2 (u_*(t)) \dt  \leq 		\frac{1}{2} \int_0^T \liminf_{\ntoinf} | \partial \left(\mathcal{E} + \mathcal{P}_t \right)|^2 (\tilde{u}_{\btau_n} (t)) \dt  \leq \liminf_{\ntoinf} \frac{1}{2} \int_0^{t_{\btau_n}^{k_n(T)}} | \partial (\mathcal{E} + \mathcal{P}_{t} )|^2 ( \tilde{u}_{\btau_n}  (t) ) \dt.
	\end{align*}
		At last, we compute the limit of the right-hand side by applying the dominated convergence theorem. Clearly, we have pointwise convergence of $\partial_t \mathcal{P}_t (\tilde{u}_{\btau_n}(t))$ to $\partial_t \mathcal{P}_t (u_*(t))$ almost everywhere. Since the De Giorgi interpolation is locally contained in the $\bs$-compact set $\tilde{K}$ and $\partial_t \mathcal{P}_t$ is $\bs$-continuous, the integrand is uniformly bounded by some constant. Hence we can conclude with the dominated convergence theorem that
	\begin{equation*}
	 \int_0^T \partial_t \mathcal{P}_t (u_*(t)) \dt = \lim_{\ntoinf}  \int_0^{ t_{\btau_n}^{k_n(T)} } \partial_t \mathcal{P}_{t}(\tilde{u}_{\btau_n}(t)) \dt.
	\end{equation*}
	Summarized, we have the following energy inequality:
			\begin{align*}
			\mathcal{E} (u_*(T)) + \mathcal{P}_T (u_*(T)) + \frac{1}{2} \int_0^T |u_* ' |^2 (t) \dt + \frac{1}{2} \int_0^T | \partial \left(\mathcal{E} + \mathcal{P}_t \right)|^2 (u_*(t)) \dt \leq  \mathcal{E} (u_0) + \mathcal{P}_0 (u_0) + \int_0^T \partial_t \mathcal{P}_t (u_*(t)) \dt. 
		\end{align*}	
	The reversed inequality follows now by the chain rule assumption, since the energy inequality above yields an upper estimate for the $L^1$-norm of $| \partial (\mathcal{E} + \mathcal{P}_t) | (u_*(t)) |u_*'|(t)$, i.e.,
	\begin{align*}
		\int_0^T | \partial (\mathcal{E} + \mathcal{P}_t) | (u(t)) |u'|(t) \dt & \leq \frac{1}{2} \int_0^T |u_* ' |^2 (t) \dt + \frac{1}{2} \int_0^T | \partial \left(\mathcal{E} + \mathcal{P}_t \right)|^2 (u_*(t)) \dt \notag \\
		& \leq \mathcal{E} (u_0) + \mathcal{P}_0 (u_0) - \mathcal{E} (u_*(T)) - \mathcal{P}_T (u_*(T)) + \int_0^T \partial_t \mathcal{P}_t (u_*(t)) \dt . 
	\end{align*}
	The right-hand-side is is always finite and therefore $| \partial (\mathcal{E} + \mathcal{P}_t) | (u_*(t)) |u_*'|(t) \in L^1_{\mathrm{loc}} \left( \left[0, \infty \right) \right)$. The boundedness from above of $\mathcal{E} (u_*(t)) + \mathcal{P}_t(u_*(t)) $ is given by 
	\begin{equation*}
	\mathcal{E} (u_*(T)) + \mathcal{P}_t ( u_* (T))	\leq\liminf_{\ntoinf} \left(\mathcal{E} ( \bar{u}_{\btau_n}(T)) + \mathcal{P}_{t^N_{\btau_n}} ( \bar{u}_{\btau_n}(T))\right).
	\end{equation*}
	Once more, the first term is bounded from above by the classical estimate \eqref{BoundFk} and the second term is bounded due to the $\bs$-continuity of $\mathcal{P}_t$ and the $\bd$-compactness of $u_* ( \left[0, T\right])$.	Thus we can apply the chain rule inequality \eqref{UpperGradientIneq} to obtain
		\begin{align*}
		\int_0^T | \partial (\mathcal{E} + \mathcal{P}_t) | (u(t)) |u'|(t) \dt & \leq \frac{1}{2} \int_0^T |u_* ' |^2 (t) \dt + \frac{1}{2} \int_0^T | \partial \left(\mathcal{E} + \mathcal{P}_t \right)|^2 (u_*(t)) \dt  \\
		& \leq \mathcal{E} (u_0) + \mathcal{P}_0 (u_0) - \mathcal{E} (u_*(T)) - \mathcal{P}_T (u_*(T)) + \int_0^T \partial_t \mathcal{P}_t (u_*(t)) \dt  \\
		& \leq \int_0^T | \partial (\mathcal{E} + \mathcal{P}_t) | (u(t)) |u'|(t) \dt .
	\end{align*}
	Therefore all inequalities have to be equalities and $u_*(t)$ is in fact a curve of steepest descent for the functional $\mathcal{E}+ \mathcal{P}_t$. 	
\qed


\section{Fokker-Planck equation}\label{sec:FPeq}

In this section, we prove that under suitable regularity assumptions, there exist weak solutions of the linear and non-linear Fokker-Planck equation \eqref{FPeq}. In contrast to \cite{fg}, we prove the existence even when the interaction energy and therefore the Moreau-Yosida functional is not $\lambda$-convex (for any $\lambda\in\R$). Our strategy of proof is as follows. We use the Minimizing movement scheme to construct an approximate weak solution to equation \eqref{FPeq}. Due to the nonlinearity of the equation at hand, however, the classical estimates provided by the scheme (cf. Theorem \ref{BoundsMMS}) are not strong enough to pass to the limit in the approximate weak formulation. Therefore, additional regularity estimates are derived using the flow interchange technique. Note that the regularity conditions of Assumption \ref{DefRegWintro} are divided into three parts. The conditions (W1)-(W3) ensure that the functional $\mathcal{W}_t$ satisfies the regularity assumptions (P1)-(P4). The condition (W4) is necessary to perform the discrete to continuous limit in the Euler-Lagrange equations and assumption (W5) will be vital when applying the flow interchange technique to derive improved regularity results for the discrete approximation $\bar{\rho}_{\tau_n,\omega}$. So before using the flow interchange technique, we give two more estimates for the discrete solutions $\left( \rk \right)_\kinN$.

\begin{lemma}[Classical estimates II] \label{MoreBoundsMMS}
Let $\rho_0 \in \mathcal{K}_m$. For fixed $T>0$, there exists a constant $C(T,\tau_*,\rho_0)$, only depending on $T, \tau_*$ and $\rho_0$, such that for all partitions  $\mathcal{T}$, with $\btau$ sufficiently small, the corresponding discrete solutions $\left( \rk \right)_\kinN$ satisfies for all $k$ with $\tk< T$:
		\begin{align}
\M( \rk ) & \leq C(T,\tau_*,\rho_0), \label{BoundSecMom}  \\  
\left|\mathcal{H}(\rk)\right| & \leq C(T,\tau_*,\rho_0). \label{BoundEntropy}
		\end{align}
\end{lemma}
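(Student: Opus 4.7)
The plan is to derive both estimates from the classical bounds of Theorem \ref{BoundsMMS}, using only standard structural features of the Wasserstein distance and of the internal energy. First I would obtain \eqref{BoundSecMom} by reinterpreting the distance bound \eqref{bounddn} as a second moment bound, and then use \eqref{BoundSecMom} together with \eqref{BoundFk} and a Gaussian comparison to reach \eqref{BoundEntropy}.

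For \eqref{BoundSecMom}, I would invoke the well-known identity $\W^2(\mu,\delta_0)=\mom(\mu)$ and the triangle inequality for $\W$, which give
\begin{equation*}
\mom(\rk)\le 2\,\W^2(\rk,u_*)+2\,\mom(u_*).
\end{equation*}
Both terms on the right are under control: the first by \eqref{bounddn} and the second by the standing assumption $u_*\in\Pzwei$.

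For the upper bound in \eqref{BoundEntropy} I would distinguish the two diffusion regimes. When $m=1$ the Boltzmann entropy $\mathcal{H}$ coincides with the time-independent part $\mathcal{E}$ of the energy, and the estimate is immediate from \eqref{BoundFk}. When $m>1$, the time-independent part is $\mathcal{U}_m$, and I would use the elementary inequality $x\log x\le \tfrac{1}{m-1}x^m$ on $[1,\infty)$ together with $x\log x\le 0$ on $[0,1]$ to derive $\mathcal{H}(\rho)\le \mathcal{U}_m(\rho)$, and conclude again via \eqref{BoundFk}.

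The lower bound in \eqref{BoundEntropy} is the only mildly delicate step, since $\mathcal{H}$ is not bounded below on $\Pzwei$. Here I would appeal to the classical Carleman-type estimate: introducing the Gaussian reference density $G_\alpha(x):=(\alpha/\pi)^{d/2}e^{-\alpha\|x\|^2}$ and exploiting nonnegativity of the relative entropy $\int\rho\log(\rho/G_\alpha)\dx\ge 0$, one arrives at
\begin{equation*}
\mathcal{H}(\rho)\ge -\alpha\,\mom(\rho)-\tfrac{d}{2}\log(\pi/\alpha).
\end{equation*}
Choosing, e.g., $\alpha=1$ and combining with the already-proved \eqref{BoundSecMom} yields the required uniform lower bound for $\mathcal{H}(\rk)$. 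This Gaussian step is the only point where the second moment bound is genuinely used, and it is precisely the bridge that turns the coercivity information available on $\mathcal{E}$ into the two-sided bound on $\mathcal{H}$ needed for the flow-interchange arguments to come.
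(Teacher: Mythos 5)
Your proposal is correct and follows essentially the same route as the paper: the second moment bound via $\M(\rk)\le 2\W^2(\rk,u_*)+2\M(u_*)$ and \eqref{bounddn}, and the two-sided entropy bound via a Carleman-type estimate sandwiching $\mathcal{H}$ between $-C(\M(\cdot)+1)^\gamma$ and $C(\mathcal{U}_m(\cdot)+1)$, combined with \eqref{BoundFk} and \eqref{BoundSecMom}. The only difference is that you derive the Carleman estimate explicitly (Gaussian relative entropy for the lower bound, the elementary pointwise comparison $x\log x\le\frac{1}{m-1}x^m$ for the upper bound, with the $m=1$ case handled directly), whereas the paper simply cites it from the Jordan--Kinderlehrer--Otto article.
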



\begin{proof}
Estimate \eqref{BoundSecMom} follows from the fact that one can estimate the second moment in terms of the Wasserstein distance, i.e.,
	\begin{equation*}
			\M(\rk) \leq 2 \W^2(\rk, \mu_* ) + 2 \M(\mu_*).
	\end{equation*}
The first term is bounded by the classical estimate \eqref{bounddn} and hence we established the first bound \eqref{BoundSecMom}. From the Carleman estimate \cite{jko} we can derive that there exists $\gamma \in \left( \frac{d}{d+2} ,1 \right)$ and $C >0 $ such that 
	\begin{equation*}
		- C ( \M(\rk) +1 )^{\gamma} \leq \mathcal{H}(\rk) \leq C ( \mathcal{U}_m (\rk) +1). 
	\end{equation*}
	Hence, estimate \eqref{BoundFk} and the first result yield the desired bound \eqref{BoundEntropy}.
\end{proof}



The next theorem is concerned with an additional regularity result for the piecewise constant interpolation $\overline{\rho}_\tau$. To be more precise, we prove a $H^1$-regularity result for $\overline{\rho}_\tau^{\frac{m}{2}}$ using the flow interchange technique along the heat flow $S^{\mathcal{H}}$ of the entropy functional $\mathcal{H}$.

\begin{thm}[Improved regularity results] \label{h1bounds}
Let $\rho_0 \in \mathcal{K}_m$. For fixed $T>0$, there exists a constant $C(T,\tau_*,\rho_0)$, only depending on $T, \tau_*$ and $\rho_0$, such that for all partitions  $\mathcal{T}$, with sufficiently small $\btau$, the corresponding discrete solution $\left( \rk \right)_\kinN$  satisfies for all $k$ with $\tk< T$:
	\begin{align}
		\left\|\nabla \left(\rk \right)^{\frac{m}{2}}\right\|^2_{\Lzwei} & \leq C(T,\tau_*,\rho_0) \left( 1+ \frac{1}{\tau_{k}} \left( \mathcal{H} ( \rkm ) - \mathcal{H} (\rk) \right) \right), \\
		\left\|  \overline{\rho}^{\frac{m}{2}}_\btau\right\|_{L^2 \left( \tau_1 ,T; H^1\left(\Rd\right) \right)} &  \leq C(T,\tau_*,\rho_0).
	\end{align}
\end{thm}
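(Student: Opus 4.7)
\emph{Proof plan.} The key idea is to apply the flow interchange technique along the heat flow $\mathsf{S}^{\mathcal{H}}$: since $\rk$ is the minimizer of the Moreau--Yosida functional, perturbing it along the $0$-flow of the entropy and using Theorem~\ref{FlowInterchange} will yield an estimate of the dissipations of $\mathcal{U}_m$ (the internal energy part) and $\tW_{\omega t^k_\btau}$ (the interaction part) in terms of the entropy increment $\mathcal{H}(\rkm) - \mathcal{H}(\rk)$. The crucial observation is that the dissipation of $\mathcal{U}_m$ along the heat flow is essentially the $L^2$-norm of the gradient of $\rho^{m/2}$, so this very quantity gets controlled for free.

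Concretely, I would apply Theorem \ref{FlowInterchange} with the auxiliary functional $\mathcal{V} := \mathcal{U}_m + \tW_{\omega t^k_\btau}$ (with the convention $\mathcal{U}_1 := \mathcal{H}$), the base measure $\nu := \rkm$, and the minimizer $\mu := \rk$. The inclusion $\mathcal{D}(\mathcal{V}) \subset \mathcal{K}_1$ holds trivially for $m = 1$ and follows from the elementary interpolation $\int \rho |\log \rho| \dx \le C(1 + \M(\rho) + \int \rho^m \dx)$ for $m > 1$. The lemma then gives
\begin{equation*}
\tau_k \, \mathfrak{D}^{\mathcal{H}}\!\bigl(\mathcal{U}_m + \tW_{\omega t^k_\btau}\bigr)(\rk) \;\le\; \mathcal{H}(\rkm) - \mathcal{H}(\rk).
\end{equation*}
The dissipation splits because the interaction part of the dissipation admits a genuine limit. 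A direct formal computation along $\rho_s := \mathsf{S}^{\mathcal{H}}_s(\rk)$, using $\partial_s \rho_s = \Delta \rho_s$ and integration by parts, yields on the one hand $\mathfrak{D}^{\mathcal{H}}\mathcal{U}_m(\rk) \ge \tfrac{4}{m} \|\nabla (\rk)^{m/2}\|_{L^2}^2$ (this is the Fisher information estimate when $m=1$; for $m>1$ it follows from $(m/2)^2 \rho^{m-2}|\nabla \rho|^2 = |\nabla \rho^{m/2}|^2$), and on the other hand, using the symmetry of $W_t$ together with two integrations by parts,
\begin{equation*}
\mathfrak{D}^{\mathcal{H}} \tW_{\omega t^k_\btau}(\rk) = - \!\iint_{\Rd\times\Rd}\! \Delta_x W_{\omega t^k_\btau}(x,y)\, \rk(x)\,\rk(y) \dxy.
\end{equation*}
Assumption (W5) then gives $|\mathfrak{D}^{\mathcal{H}} \tW_{\omega t^k_\btau}(\rk)| \le d_4 (1 + 2 \M(\rk))$, which by the classical bound \eqref{BoundSecMom} is uniformly controlled by some constant $C(T, \tau_*, \rho_0)$. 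Combining and dividing by $\tau_k$ yields the pointwise estimate claimed in the first inequality.

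For the second estimate, I would sum the pointwise estimates for all $k$ with $\tk \in (\tau_1, T]$: the $C \tau_k$-contributions amount to at most $CT$, while the entropy increments telescope and are bounded by $2 \sup_k |\mathcal{H}(\rk)|$, which is finite thanks to the classical bound \eqref{BoundEntropy}. This furnishes the uniform bound on $\|\nabla \bar\rho_\btau^{m/2}\|_{L^2(\tau_1, T; L^2(\Rd))}$. The $L^2(\tau_1, T; L^2(\Rd))$-norm of $\bar\rho_\btau^{m/2}$ itself is immediately controlled, since $\|\rho^{m/2}\|_{L^2}^2 = \int \rho^m \dx$ is either $1$ (for $m=1$) or equal to $(m-1)\mathcal{U}_m(\rho)$ which is uniformly bounded by the classical estimate \eqref{BoundFk}. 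Adding the two yields the desired $H^1$-bound. The main delicate point is the rigorous justification of the dissipation computation: parabolic regularization guarantees that $\rho_s$ is instantaneously smooth with Gaussian tails and that its second moment grows only linearly in $s$, so the integrations by parts are justified, and the growth estimates (W5) on $\Delta_x W$ are precisely what is needed to control the perturbation dissipation without a convexity assumption.
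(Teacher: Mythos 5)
Your proposal is correct and follows essentially the same route as the paper: flow interchange along the heat flow, with the internal-energy dissipation producing $\tfrac{4}{m}\|\nabla\rho^{m/2}\|_{\Lzwei}^2$ and the interaction dissipation computed by integration by parts and controlled via (W5) together with the second-moment bound \eqref{BoundSecMom}, followed by a telescoping sum using \eqref{BoundEntropy} and \eqref{BoundFk}. The one step the paper spells out that you assert without proof is the passage from the time-averaged dissipation bound to the pointwise inequality $\|\nabla(\rk)^{m/2}\|_{\Lzwei}^2 \le \liminf_{t\searrow 0}\frac1t\int_0^t\|\nabla\rho_s^{m/2}\|_{\Lzwei}^2\,\mathrm{d}s$ (needed since $\nabla(\rk)^{m/2}\in\Lzwei$ is not known a priori), which the paper obtains by introducing the averaged function $D(t,\cdot)=\frac1t\int_0^t\rho_s^{m/2}\,\mathrm{d}s$, noting $D(t)\to(\rk)^{m/2}$ in $\Lzwei$, and using weak compactness of $\nabla D(t,\cdot)$ together with weak lower semicontinuity of the norm.
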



\begin{proof}

Fix $\rk$ and define $\rho_s$ as the solution of the heat equation with initial datum $\rk$, which corresponds to the $0$-flow $S^{\mathcal{H}}$ of the entropy functional $\mathcal{H}$. By the regularization property of the heat equation one has $\rho_s \in \mathcal{K}_m \cap \mathcal{C}^\infty$ and $\rho_s >0$ for $s>0$. So we can compute the dissipation of the energy along the heat flow and obtain by using (W4)
	\begin{align*}
	\intertext{$m=1$:}
		 - \frac{\mathrm{d}}{\mathrm{d}s} \left(\mathcal{H}+ \tW_\tk \right) (\rho_s) & = - \int_\Rd \log \left( \rho_s(x) \right) \Delta \rho_s (x) \dx - \frac{1}{2} \iint\limits_{\Rd \times \Rd} W_\tk (x,y)	 \left( \Delta_x \rho_s(x) \rho_s(y) + \rho_s(x) \Delta_y \rho_s(y) \right) \dxy \\
		& =  4  \int_\Rd \left\| \nabla \rho_s^{\frac{1}{2}} (x) \right\|^2 \dx -  \iint\limits_{\Rd\times\Rd}  \Delta_x W_\tk (x,y)  \rho_s(x) \rho_s(y)  \dxy \\ 
		& \geq  4 \int_\Rd \left\| \nabla \rho_s^{\frac{1}{2}} (x) \right\|^2 \dx -  d_3( 1 + 2 \M(\rho_s) ) ,
		\intertext{$m>1$:}
		-\frac{\mathrm{d}}{\mathrm{d}s} \left( \mathcal{U}_m + \tW_\tk \right) (\rho_s) & =  - \frac{m}{m-1} \int_\Rd \rho^{m-1}_s(x) \Delta \rho_s (x) \dx - \frac{1}{2} \iint\limits_{\Rd \times \Rd} W_\tk (x,y)	 \left( \Delta_x \rho_s(x) \rho_s(y) + \rho_s(x) \Delta_y \rho_s(y) \right) \dxy \\
		&=  \frac{4}{m} \int_\Rd \left\| \nabla \rho_s^{\frac{m}{2}} (x) \right\|^2 \dx -  \iint\limits_{\Rd\times\Rd}  \Delta_x W_\tk (x,y)  \rho_s(x) \rho_s(y)  \dxy \\ 
		& \geq \frac{4}{m} \int_\Rd \left\| \nabla \rho_s^{\frac{m}{2}} (x) \right\|^2 \dx -  d_3( 1 + 2 \M(\rho_s) ) .
	\end{align*}
	Now, since the the second moment increases at most linearly along the heat flow $\rho_s$, the flow interchange lemma (Theorem \ref{FlowInterchange}) yields with the uniform bounds of the second moments \eqref{BoundSecMom}
		\begin{align}
		\begin{split}
			\liminf_{t \searrow 0} \frac{1}{t} \int_0^t \left\| \nabla \rho_s^{\frac{m}{2}} \right\|_\Lzwei^2 \ ds & \leq \frac{m}{4} \left(\mathcal{D}^{\mathcal{H}}(\mathcal{E}+\mathcal{W}_\tk)(\rk) +  C(T,\tau_*,\rho_0)  \right)  \\
			& \leq  \frac{m}{4} \left( \frac{1}{\tau_{k}} \left( \mathcal{H} ( \rkm ) - \mathcal{H} (\rk) \right) +  C(T,\tau_*,\rho_0)  \right). \label{ApplFlowInterchange}
			\end{split}
		\end{align}
To compute the limes inferior of the l.h.s., define the auxiliary function $D$
	\begin{equation*}
D(t,x) := \frac{1}{t} \int_0^t \rho_s(x)^{\frac{m}{2}} \ \mathrm{d} s  ,
\end{equation*}
which is an element of $\mathcal{C}\left( \left[0,\infty \right); \Lzwei \right)$ and as a consequence $D(t) \rightarrow D(0) = \left( \rk \right)^{\frac{m}{2}}$ in $\Lzwei$ for $t \searrow0$. Due to inequality \eqref{ApplFlowInterchange} and the bounds on the entropy \eqref{BoundEntropy} the gradient of $D (t) $ is bounded in $\Lzwei$ for sufficiently small $t$, since
	\begin{equation*}
		\left\|\nabla D(t, \cdot )\right\|_\Lzwei^2 = \left\| \frac{1}{t} \int_0^t \nabla \rho_s^{\frac{m}{2}} \ \mathrm{d} s \right\|_\Lzwei^2 \leq \frac{1}{ t} \int_0^t \left\| \nabla \rho_s^{\frac{m}{2}} \right\|_\Lzwei^2 \ \mathrm{d} s .
\end{equation*}
Thus  $\nabla D (t,  \cdot ) $ converges weakly in $\Lzwei$ to $\nabla \left( \rk \right)^{\frac{m}{2}}$ and consequently $\nabla \left( \rk \right)^{\frac{m}{2}}\in \Lzwei$. By the weak lower semi-continuity of the $\Lzwei$-norm, we obtain
	\begin{equation*}
		\left\|\nabla \left( \rk \right)^{\frac{m}{2}}\right\|_\Lzwei^2 \leq  \liminf_{t \searrow 0} \frac{1}{t} \int_0^t \left\| \nabla \rho_s^{\frac{m}{2}} \right\|_\Lzwei^2 \ \mathrm{d} s \leq \frac{m}{4} \left(\frac{1}{\tau_{k}} \left( \mathcal{H} ( \rkm ) - \mathcal{H} (\rk) \right) +  C(T,\tau_*,\rho_0)  \right). 
	\end{equation*}
The second claim follows by adding up all those inequalities, using the telescoping sum, and the classical estimate \eqref{BoundFk}, i.e.,
	\begin{align*}
	\left\| \overline{\rho}^{\frac{m}{2}}_\btau\right\|_{L^2 \left( \tau_1,T; H^1\left(\Rd\right) \right)} & \leq  \sum_{k=1}^{k_\btau (T) +1} \tau_k \left(  \left\|\nabla \left(\rk \right)^{\frac{m}{2}}\right\|^2_{\Lzwei} + (m-1) \mathcal{U}_m( \rk) \right)\\
	& \leq C(T,\tau_*,\rho_0) \left( t^{k_\btau (T) +1 }_\btau +   \mathcal{H}(\rho_0) - \mathcal{H} (\rho_\btau^{k_\btau (T)+1} )  \right) \\
	& \leq C(T,\tau_*,\rho_0). \qedhere
	\end{align*}
\end{proof}

In the next theorem we derive approximate Euler-Lagrange equations for the weak formulation, the key idea is the \emph{JKO}-method \cite{jko}, i.e., we determine the first variation in the space $(\Pzwei,\W)$. 


\begin{thm}[Approximate Euler-Lagrange equations] \label{DiscInequaDens}
Given a partition $\mathcal{T}$ with $\btau \in \left(0,\tau_* \right)$ and $\rho_0 \in \mathcal{K}_m$. Let $\left(\rk\right)_\kinN$ be the corresponding discrete solution and define $\bp_\btau^k$ as the optimal transport plan of $\rk$ and $\rkm$. Then we have for all $\psi \in C^\infty_c(\Rd) $
\begin{equation}
 \iint\limits_{\Rd\times \Rd} \nabla_x W_{\tk} (x,y) \cdot \nabla \psi(x) \rk(x) \rk(y) \dxy - \int_{\Rd}  \Delta \psi(x) \rk (x)^m \dx = - \frac{1}{\tau_k} \iint\limits_{\Rd\times\Rd} (y-x) \nabla \psi(y) \ \mathrm{d} \bp_\btau^k(x,y).  \label{MMsequality}
\end{equation}
\end{thm}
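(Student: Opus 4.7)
\medskip

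\noindent\textbf{Proof proposal.} The standard approach is the JKO variational argument via smooth perturbation by the flow of $\nabla\psi$. Fix $\psi\in\mathcal{C}_c^\infty(\Rd)$ and, for $\eps\in\R$ of sufficiently small modulus, define the smooth map
\begin{equation*}
T_\eps(x):=x+\eps\nabla\psi(x),
\end{equation*}
which is a $\mathcal{C}^\infty$-diffeomorphism of $\Rd$ (with $\det DT_\eps=1+\eps\Delta\psi+O(\eps^2)$) because $\nabla\psi$ is compactly supported. Set $\rho_\eps:=(T_\eps)_\#\rk$. Since $T_\eps$ is a diffeomorphism and $\rk$ is absolutely continuous with finite second moment, so is $\rho_\eps$, and its density satisfies $\rho_\eps\circ T_\eps\cdot \det DT_\eps=\rk$. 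The plan is to exploit the minimality of $\rk$ in \eqref{MMs} by inserting $\rho_\eps$ as a competitor:
\begin{equation*}
\frac{1}{2\tau_k}\W^2(\rkm,\rk)+\mathcal{U}_m(\rk)+\tW_{\tk}(\rk)\le \frac{1}{2\tau_k}\W^2(\rkm,\rho_\eps)+\mathcal{U}_m(\rho_\eps)+\tW_{\tk}(\rho_\eps),
\end{equation*}
(and analogously with $\mathcal{H}$ in place of $\mathcal{U}_m$ if $m=1$), then divide by $\eps$ and pass to the limit $\eps\to 0^+$ and $\eps\to 0^-$ separately.

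For the Wasserstein term I would use the transport plan $(\mathrm{id}\otimes T_\eps)_\#\bp_\btau^k$, which couples $\rkm$ and $\rho_\eps$ and yields the upper bound
\begin{equation*}
\W^2(\rkm,\rho_\eps)\le \iint_{\Rd\times\Rd}\|T_\eps(y)-x\|^2\dd\bp_\btau^k(x,y) = \W^2(\rkm,\rk)+2\eps\iint(y-x)\cdot\nabla\psi(y)\dd\bp_\btau^k(x,y)+O(\eps^2),
\end{equation*}
which gives one-sided bounds. For the internal energy, the change-of-variables formula yields
\begin{equation*}
\mathcal{U}_m(\rho_\eps)=\frac{1}{m-1}\int_\Rd \rk(x)^m\,(\det DT_\eps(x))^{1-m}\dx= \mathcal{U}_m(\rk)-\eps\int_\Rd \rk^m\Delta\psi\dx+O(\eps^2),
\end{equation*}
(and similarly $\mathcal{H}(\rho_\eps)=\mathcal{H}(\rk)-\eps\int\rk\Delta\psi\dx+O(\eps^2)$ for $m=1$). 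For the interaction term I pull back the double integral to $\rk\otimes\rk$ and differentiate,
\begin{equation*}
\tW_{\tk}(\rho_\eps)=\tfrac{1}{2}\iint W_{\tk}(T_\eps(x),T_\eps(y))\rk(x)\rk(y)\dxy,
\end{equation*}
using the symmetry of $W$ in Assumption~\ref{DefRegWintro}(W1) together with the growth bound (W5) to justify differentiation under the integral; the derivative at $\eps=0$ becomes
\begin{equation*}
\iint_{\Rd\times\Rd}\nabla_x W_{\tk}(x,y)\cdot\nabla\psi(x)\,\rk(x)\rk(y)\dxy.
\end{equation*}

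Combining these expansions, dividing the minimizing inequality by $\eps>0$ and letting $\eps\searrow 0$, one obtains a $\le 0$ inequality; running the argument with $-\psi$ in place of $\psi$ (or $\eps\to 0^-$) gives the reverse, hence the identity \eqref{MMsequality}. The main technical obstacle I anticipate is justifying the remainder control $O(\eps^2)$ uniformly so that the formal expansions may be differentiated: for the internal energy this requires absolute continuity of $\rho_\eps$ and a uniform $L^m$-bound on $\rk$ (given), while for the interaction term one needs to dominate $W_{\tk}$ and $\nabla_x W_{\tk}$ by integrable majorants on $\mathrm{supp}(\rk\otimes\rk)$, which is where Assumption~\ref{DefRegWintro}(W2), (W5) (quadratic and subquadratic growth) together with the uniform second moment bound \eqref{BoundSecMom} enter. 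The Wasserstein side is more delicate because we only obtain a one-sided estimate from the competitor plan; the symmetrization in $\pm\eps$ is precisely what turns the two one-sided inequalities into the desired equality.
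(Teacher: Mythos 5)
Your proposal is correct and follows essentially the same route as the paper: the paper perturbs the minimizer by the push-forward under the ODE flow $\Phi_s$ of $\nabla\psi$ and computes the first variation of the Moreau--Yosida functional, which yields exactly the same first-order terms as your linearized map $T_\eps=\mathrm{id}+\eps\nabla\psi$, and it likewise obtains the equality by replacing $\nabla\psi$ with $-\nabla\psi$. The only (immaterial) differences are the choice of flow versus linear perturbation and your more explicit discussion of the remainder estimates.
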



\begin{proof}
Fix $\rk, \rkm$ and $\psi$ and introduce the flux function $\Phi_s$, which satisfies
\begin{equation*}
 \frac{\mathrm{d}}{\mathrm{d}s}  \Phi_s (x) = \nabla \psi \left( \Phi_s (x) \right), \qquad \Phi_0(x) = x.
\end{equation*}
The flux function exists, is continuous differentiable with respect to the initial condition and is locally invertible at time $s=0$, since $D \Phi_0 (x) = \mathds{1}$. Consider the perturbation $\rho_s$ of $\rk$ as the push-forward of $\rk$ under $\Phi_s$, i.e., $\rho_s = (\Phi_s)_\# \rk$ and calculate the first variation of the Moreau-Yosida functional, which is given by
	\begin{align*}
		\intertext{$m=1$:}
					\frac{\mathrm{d}}{\mathrm{d}s} \left. \left( \frac{1}{2\tau_k} \W^2(\rho_s,\rkm) + \mathcal{H} (\rho_s)+ \mathcal{W}_\tk (\rho_s)\right)\right|_{s=0}  & \leq \frac{1}{\tau_k} \iint\limits_{\Rd\times\Rd} (y-x) \nabla \psi(y) \ \mathrm{d} \bp_\btau^k(x,y) - \int_\Rd \Delta \psi (x) \rk (x) \dx \\ 
					& \quad +  \iint\limits_{\Rd \times \Rd} \nabla_x W_\tk(x,y) \cdot \nabla \psi(x) \rk(x) \rk(y) \dxy, \\
		\intertext{$m>1$:}
					\frac{\mathrm{d}}{\mathrm{d}s}  \left. \left( \frac{1}{2\tau_k}\W^2(\rho_s,\rkm) +  \mathcal{H}(\rho_s) + \mathcal{W}_\tk (\rho_s) \right) \right|_{s=0} & \leq	\frac{1}{\tau_k} \iint\limits_{\Rd\times\Rd} (y-x) \nabla \psi(y) \ \mathrm{d} \bp_\btau^k(x,y) - \int_\Rd \Delta \psi (x) \rk (x)^m \dx \\
					& \quad  +  \iint\limits_{\Rd \times \Rd}  \nabla_x W_\tk(x,y) \cdot \nabla \psi(x) \rk(x) \rk(y) \dxy.
	\end{align*}
	Since $\rk$ is a minimizer of the Moreau-Yosida functional, the directional derivative has to be greater or equal to zero. Notice that if $\Phi_s$ solves the flux equation for $\nabla \psi$, then $\Phi_{-s}$ solves the flux equation for $-\nabla \psi$. Hence, we obtain the reversed inequality and therefore the desired result.
\end{proof}

This theorem is devoted to the convergence of the interpolation of the densities to a limit curve, which will turn out to be a weak solution.


\begin{thm}[Convergence of piecewise constant interpolation $\overline{\rho}_\btau$] \label{ConvergencePiecewiseConstantInterpolation}
Given a family of partitions $\left(\mathcal{T}_n \right)_\ninN$, with sufficiently small $\btau_n$  and $\btau_n \searrow 0$. For $\rho_0 \in \mathcal{K}_m$ define the corresponding piecewise constant interpolation $\overline{\rho}_{\btau_n}$ and let $\rho_*  \in \mathrm{AC}^2 \left(\left[0, \infty \right), \Paz \right)$ be the limit curve obtained by the Minimizing Movement method from Theorem \ref{ExistenceMMSLimitcurvePC}. Then there exists a non-relabeled subsequence of $\btau_n$ such that for all $\Omega \Subset \Rd$ we have 
		\begin{equation*}
			\overline{\rho}_{\btau_n}  \rightarrow \rho_* \qquad \mathrm{in} \ L^m\left(0,T;L^m(\Omega) \right) \qquad  \mathrm{and \ all} \ T\in \left[0,\infty \right).
		\end{equation*}
\end{thm}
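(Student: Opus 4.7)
The plan is to invoke the extended Aubin--Lions compactness lemma (Theorem \ref{LmLmConv}) to obtain convergence in measure on $(0,T)\times\Omega$ for every $\Omega \Subset \Rd$, then to upgrade this to the desired $L^m$-convergence via uniform equi-integrability, and finally to identify the limit with the curve $\rho_*$ from Theorem \ref{ExistenceMMSLimitcurvePC}. I would take the ambient Banach space to be $\X := L^1(\Omega)$, embedding each $\rho \in \Paz$ via its density restricted to $\Omega$, and define $\mathcal{A}(\rho) := \|\rho^{m/2}\|_{H^1(\Omega)}^2 + \M(\rho)$ on nonnegative densities (and $+\infty$ otherwise). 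A sublevel of $\mathcal{A}$ consists of densities with a uniform $H^1(\Omega)$-bound on $\rho^{m/2}$, which by Rellich--Kondrachov is relatively compact in $L^2(\Omega)$; this implies relative compactness of $\rho$ in $L^m(\Omega)$ and a fortiori in $L^1(\Omega)$, so $\mathcal{A}$ has compact sublevels in $\X$.

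For the pseudo-metric $g$ in Theorem \ref{LmLmConv}, the natural choice is the Wasserstein distance $\W$ (set to $+\infty$ outside $\Pzwei$); it is lower semi-continuous and separates elements of $\mathcal{D}(\mathcal{A})$. The first hypothesis in \eqref{AssLmLmConv} — the uniform time integral of $\mathcal{A}$ along $\overline{\rho}_{\btau_n}$ — is exactly the content of Theorem \ref{h1bounds} combined with \eqref{BoundSecMom} (up to a harmlessly discarded initial interval of length $\tau_{n,1} \to 0$). For the averaged equi-continuity, the classical estimate \eqref{boundsumdk} and Cauchy--Schwarz yield, for $0 \le s \le s+t \le T$ and indices $k_-,k_+$ with $t^{k_-}_{\btau_n} \le s$ and $t^{k_+}_{\btau_n} \ge s+t$,
\begin{equation*}
\W(\overline{\rho}_{\btau_n}(s+t),\overline{\rho}_{\btau_n}(s)) \le \sum_{k_-<k\le k_+} \W(\rho^{k}_{\btau_n},\rho^{k-1}_{\btau_n}) \le \Bigl(\sum_{k} \tau_{n,k}\Bigr)^{1/2}\Bigl(\sum_{k} \tau_{n,k}^{-1}\W^2(\rho^{k}_{\btau_n},\rho^{k-1}_{\btau_n})\Bigr)^{1/2} \le \sqrt{2C(t+|\btau_n|)},
\end{equation*}
where $|\btau_n| := \max_k \tau_{n,k} \to 0$. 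Integration in $s$ and $t$, followed by $n\to\infty$ and $h\searrow 0$, verifies the second condition of \eqref{AssLmLmConv}.

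Theorem \ref{LmLmConv} then delivers a subsequence converging in measure on $(0,T)\times\Omega$. To promote this to strong $L^m$-convergence, I would exploit Sobolev embedding: the uniform bound of $\overline{\rho}_{\btau_n}^{m/2}$ in $L^2(\tau_{n,1},T;H^1(\Rd))$ gives a bound in $L^2(0,T;L^{2^*}(\Rd))$, hence $\overline{\rho}_{\btau_n}^{m}$ is uniformly bounded in $L^1(0,T;L^{d/(d-2)}(\Omega))$ when $d\ge 3$ (with obvious modifications for $d\le 2$). This higher integrability gives equi-integrability of $\overline{\rho}_{\btau_n}^{m}$ on $(0,T)\times\Omega$, so Vitali's theorem combined with the convergence in measure produces the desired strong $L^m((0,T)\times\Omega)$-convergence. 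A diagonal argument over an exhaustion of $\Rd$ by compact sets selects one subsequence valid for every $\Omega \Subset \Rd$, and compatibility with the pointwise narrow convergence from Theorem \ref{ExistenceMMSLimitcurvePC} identifies the limit almost everywhere with $\rho_*$.

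The main obstacle is the consistent embedding of the probability-measure framework (narrow convergence on $\Pzwei$, Wasserstein bounds) into the Banach-space framework required by Theorem \ref{LmLmConv}: one must set up $\mathcal{A}$ and $g = \W$ carefully as a proper functional and a proper pseudo-metric on $L^1(\Omega)$, while retaining the separation property on $\mathcal{D}(\mathcal{A})$ and the correct lower semi-continuity so that the abstract hypotheses really apply; everything else is a fairly standard combination of the flow-interchange estimates, Sobolev embedding and Vitali's theorem.
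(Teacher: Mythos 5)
Your overall architecture coincides with the paper's: the extended Aubin--Lions lemma (Theorem \ref{LmLmConv}) applied on a fixed ball, with the normal coercive integrand built from the $H^1$-bound on $\overline{\rho}_{\btau_n}^{m/2}$ of Theorem \ref{h1bounds}, the averaged equi-continuity extracted from \eqref{boundsumdk} via Cauchy--Schwarz, and a final upgrade from convergence in measure to $L^m$-convergence plus a diagonal argument. The one genuine gap is exactly the point you defer to at the end: the pseudo-distance $g$ cannot literally be taken to be $\W$ on $L^1(\Omega)$. An element of $L^1(\Omega)$ is a \emph{restriction} of a probability density; it has total mass $<1$ in general and does not determine the measure outside $\Omega$, so $\W$ of two restrictions is undefined, and with your convention ``$+\infty$ outside $\Pzwei$'' the equi-continuity integral in \eqref{AssLmLmConv} would be identically $+\infty$ along the restricted interpolants. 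This is not a cosmetic issue, because the separation property ($g=0\Rightarrow$ equality in $L^m(\Omega)$) and the lower semi-continuity of $g$ with respect to the $L^m(\Omega)$-topology are precisely the hypotheses that make the abstract lemma applicable.

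The paper resolves this by setting
$g(\rho,\nu):=\inf\bigl\{\W(\tilde\rho,\tilde\nu)\,:\,\tilde\rho\in\Sigma(\rho),\ \tilde\nu\in\Sigma(\nu)\bigr\}$,
where $\Sigma(\rho)$ is the set of measures in $\Pzwei$ restricting to $\rho$ on $\Omega$ \emph{and} having second moment bounded by the a priori constant $C(T,\tau_*,\rho_0)$ from \eqref{BoundSecMom}. The second-moment cap is what makes $\Sigma(\rho)$ narrowly compact, so the infimum is attained; $g(\rho,\nu)=0$ then forces the optimal extensions to coincide as measures, hence $\rho=\nu$ a.e.\ on $\Omega$, and lower semi-continuity follows by extracting narrowly convergent subsequences of minimizing extensions. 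You should supply this construction (or an equivalent one) to close the argument. Two further minor points: on $(0,\tau_{n,1})$ the interpolant equals $\rho_0$, for which $\mathcal{A}$ may be $+\infty$, so the discarded initial interval must be a fixed $\tilde\tau>0$ (with a second diagonal argument in $\tilde\tau\searrow 0$ at the end) rather than the $n$-dependent interval $(0,\tau_{n,1})$; and your Vitali/Sobolev route for upgrading convergence in measure to $L^m$-convergence is a valid alternative to the paper's simpler dominated-convergence argument, which only uses the uniform $L^\infty(0,T;L^m(\Rd))$ bound already contained in \eqref{BoundFk}.
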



\begin{proof}
Fix $\Omega=\mathds{B}_R(0) \subset \Rd$ and $\tilde{\tau} > 0$. In order to apply the extension of the Aubin-Lions lemma (Theorem \ref{LmLmConv}), define the functional $\mathcal{A}$, via			
		\begin{align*}
		 \mathcal{A}  (\rho) := & \begin{cases} \left\|\rho^{\frac{m}{2}} \right\|^2_{H^1\left(\Omega\right)}& \mbox{if} \ \rho^{\frac{m}{2}} \in H^1\left(\Omega\right), \\
		 + \infty  & \mbox{else}. \end{cases} 
\end{align*}
	This functional $\mathcal{A}$ is clearly measurable, lower semi-continuous and has compact sublevels with respect to the $L^m(\Omega)$ topology. Furthermore, introduce the pseudo-distance $g$ on $L^m(\Omega)$ as follows:
	\begin{align*}
		g\left(\rho,\nu \right) & :=   \inf \left\{ \W( \tilde{\rho},\tilde{\nu}) \left| \tilde{\rho} \in \Sigma(\rho),\ \tilde{\nu} \in \Sigma(\nu) \right. \right\}, \\
		\Sigma(\rho) & := \left\{ \tilde{\rho} \in \Pzwei \left| \tilde{\rho} |_{\Omega}= \rho, \ \M(\tilde{\rho}) \leq C(T,\tau_*,\rho_0) \right. \right\} .
		\end{align*}
		For convenience, we set $g$ equal to infinity if either $\Sigma(\rho) = \emptyset$ or $\Sigma(\nu) = \emptyset$. Since $\Sigma(\rho)$ and $\Sigma(\nu)$ are compact sets with respect to the narrow topology, the infimum is attained by some pair $\tilde{\rho}_*, \tilde{\nu}_*$. Moreover, $g(\rho,\nu)=0$ implies $\tilde{\rho}_*=\tilde{\nu}_*$ in the sense of measures and clearly $\rho=\nu$ a.e. and in $L^m(\Omega)$. To prove the lower semi-continuity of $g$ with respect to the $L^m$-topology choose two convergent sequences $\rho_n\rightarrow \rho, \ \nu_n \rightarrow \nu$ in $L^m(\Omega)$ with $ \sup_n g(\rho_n,\nu_n) < \infty$. Hence, there is a sequence of minimizer $\tilde{\rho}_n,\tilde{\nu}_n$ such that $ g(\rho_n, \nu_n) = \W (\tilde{\rho}_n, \tilde{\nu}_n)$. Since the second moments  are by definition uniformly bounded, we can extract a non-relabeled convergent subsequence which converges narrowly to $\tilde{\rho} \in \Sigma(\rho), \ \tilde{\nu} \in\Sigma(\nu)$. Summarized we obtain
			\begin{equation*}
				g(\rho,\nu) \leq \W ( \tilde{\rho}, \tilde{\nu}) \leq \liminf_\ntoinf \W( \tilde{\rho}_n, \tilde{\nu}_n) = \liminf_\ntoinf g(\rho_n,\nu_n). 
			\end{equation*}
Therefore, $g$ is lower semi-continuous with respect to narrow convergence and is compatible with $\mathcal{A}$, i.e., $g$ satisfies the hypothesis of Theorem \ref{LmLmConv}. Next, we verify the assumptions to the sequence $\left( \left.\overline{\rho}_{\btau_n}\right|_{\Omega} \right)_\ninN$ of Theorem \ref{LmLmConv}. By Theorem \ref{h1bounds} it is clear that $\left( \left.\overline{\rho}_{\btau_n}\right|_{\Omega} \right)_\ninN$ is tight with respect to $\mathcal{A}$, since for sufficiently small $\tau_n$:
		\begin{equation*}
			\sup_n \int_{\tilde{\tau}}^T \mathcal{A}(\left.\overline{\rho}_{\btau_n}\right|_{\Omega}  (t)) \dt = \sup_n \int_{\tilde{\tau}}^T \left\|\overline{\rho}_{\btau_n}^{\frac{m}{2}} (t)    \right\|_{H^1(\Omega)}^2 \dt \leq \sup_n \int_{\tilde{\tau}}^T \left\|\overline{\rho}_{\btau_n}^{\frac{m}{2}}(t)  \right\|_{H^1(\Rd)}^2 \dt \leq C(T, \tau_*, \rho_0).
		\end{equation*}		
		To prove the  relaxed averaged weak integral equi-continuity condition of $\left.\overline{\rho}_{\btau_n}\right|_{\Omega}$ we fix $\btau$ and estimate the inner integrand by
\begin{equation*}
	\int_0^{T-h} g ( \left.\overline{\rho}_{\btau}\right|_{\Omega} (t+h), \left.\overline{\rho}_{\btau}\right|_{\Omega} (t)) \dt  \leq \int_0^{T-h} \W ( \overline{\rho}_{\btau}  (t+h), \overline{\rho}_{\btau} (t)) \dt = \sum_{k=0}^{k_\btau (t)-1} \int_{\tk}^{\tkp} \W (\overline{\rho}_\btau (t+h), \rk) \dt.
\end{equation*}
To calculate this expression define $l_k \in \N$ as the smallest integer such that $ h \leq t_\btau^{k+l_k+1} - \tk $. Then we have
		\begin{align*}
		\sum_{k=0}^{k_\btau (t)-1} 	\int_{\tk}^{\tkp} \W(\overline{\rho}_\btau (t+h), \rk) \dt & \leq \sum_{k=0}^{k_\btau (t)-1}  \tau_k \sum_{j=0}^{l_k } \W(\rho_\btau^{k+j+1}, \rho_\btau^{k+j}) \\
			& \leq \sum_{k=0}^{k_\btau (t)-1}  \tau_k \left( \sum_{j=0}^{l_k } \tau_{k+j+1} \right)^{\frac{1}{2}} \left( \sum_{j=0}^{l_k } \frac{1}{\tau_{k+j+1}}  \W^2(\rho_\btau^{k+j+1}, \rho_\btau^{k+j}) \right)^{\frac{1}{2}} \\
					\intertext{by H\"older's inequality. Consequently, with \eqref{boundsumdk}, one has}
			\sum_{k=0}^{k_\btau (t)-1} 	\int_{\tk}^{\tkp} \W(\overline{\rho}_\btau (t+h), \rk) \dt & \leq  \sum_{k=0}^{k_\btau (t)-1}  \tau_k  \left( \tau_{k+l_k +1} + h \right)^{\frac{1}{2}}  C(T,\tau_*,\rho_0)  \leq  C(T,\tau_*,\rho_0)  T \left(\btau + h \right)^{\frac{1}{2}}.
		\end{align*}
		So, the combination of these results yield
		\begin{equation*}
			\liminf_{h \searrow 0 } \limsup_{\ntoinf} \frac{1}{h} \int_0^h  \int_0^{T-t} g ( \left.\overline{\rho}_{\btau_n}\right|_{\Omega} (s+t), \left.\overline{\rho}_{\btau_n}\right|_{\Omega} (s)) \dss \dt  \leq  \liminf_{h \searrow 0 } \limsup_{\ntoinf} C(T,\tau_*,\rho_0)  \frac{1}{h} \int_0^h \left(\btau_n + t \right)^{\frac{1}{2}} \dt =0.
		\end{equation*}
		Therefore, we can conclude that there exists a non-relabeled subsequence $\overline{\rho}_{\btau_n}$ which converges in $L^m(\mathds{B}_R(0))$ to some limit $\rho_+$, in measure w.r.t. $t\in[0,T]$. Using the uniform bound in $L^\infty(0,T;L^m(\Rd))$, we obtain convergence in $L^m( \tilde{\tau}, T, L^m(\mathds{B}_R(0)))$ by the dominated convergence theorem. Moreover, the limit curves $\rho_*$ and $\rho_+$ have to coincide, since the $\overline{\rho}_\btau$ converges also in measure to $\rho_+$ and to $\rho_*$, so both limits have to be equal. Two diagonal arguments in $\tilde{\tau} \searrow 0$ and $R \rightarrow \infty$ yield the desired convergence result.
\end{proof}


To complete the proof of Theorem \ref{thm:fpe}(a) it remains to verify that $\rho_*$ is indeed a solution to \eqref{FPeq} in the sense of distributions.

\begin{thm}\label{ExistenceFP}
Let $m \in \left[1,\infty\right)$ and $\rho_0\in\Km$. Then there exists a global weak solution $\rho_* \in \mathrm{AC}^2 \left( \left[0,\infty \right]; \Paz \right)$ of the time-dependent Fokker-Planck equation \eqref{FPeq}, i.e., $\rho_*$ satisfies for all $\psi \in \mathcal{C}_c^{\infty} \left( \left[0,\infty \right) \times \Rd \right)$:
			\begin{align*}
			 &  \iiint\limits_{\left[0,T\right]\times\Rd\times \Rd} \nabla_x W_t (x,y) \nabla \psi(t,x) \rho_* (t,x)  \rho_* (t,y)  \mathrm{d} x \mathrm{d} y\mathrm{d} t -\int_0^{T}\int_{\Rd}  \Delta \psi(t,x)  \rho_* (t,x)^m \dx \dt \\
			= & \int_0^T \int_\Rd \rho_*(t,x) \partial_t \psi(t,x) \dx \dt + \int_\Rd \rho_0(x) \psi(0,x)\dx.
			\end{align*}
\end{thm}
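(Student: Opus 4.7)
The strategy is to pass to the limit $\btau_n \to 0$ in the discrete Euler--Lagrange identity \eqref{MMsequality} of Theorem \ref{DiscInequaDens}, tested against a time-dependent function. Fix $\psi \in \mathcal{C}_c^\infty([0,\infty) \times \Rd)$ with temporal support contained in $[0, T)$ for some $T > 0$. For each $k$ with $\tk < T$, apply \eqref{MMsequality} with the frozen spatial test function $\psi(\tk, \cdot)$, multiply by $\tau_k$, and sum over $k=1,\ldots,N$ with $t^N_{\btau_n} \geq T$. The three resulting terms are then treated separately.

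For the nonlinear diffusion term $\sum_k \tau_k \int_\Rd \Delta \psi(\tk, x) (\rk)^m \dx$, rewrite it as $\int_0^{t^N_{\btau_n}} \!\!\int_\Rd \Delta \psi_{\btau_n}(t,x)\,\overline{\rho}_{\btau_n}^m(t,x) \dx \dt$, where $\psi_{\btau_n}$ is the left-continuous piecewise constant temporal interpolation of $\psi$. Since $\Delta \psi$ is compactly supported in some $\Omega \Subset \Rd$, the strong convergence $\overline{\rho}_{\btau_n} \to \rho_*$ in $L^m(0,T;L^m(\Omega))$ from Theorem \ref{ConvergencePiecewiseConstantInterpolation} yields the limit $\int_0^T \int_\Rd \Delta \psi(t,x)\,\rho_*^m(t,x) \dx \dt$. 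The interaction term $\sum_k \tau_k \iint \nabla_x W_{\tk}(x,y) \cdot \nabla \psi(\tk,x)\, \rk(x)\rk(y) \dxy$ is handled analogously: restrict the $x$-integration to the compact support of $\nabla \psi$ (where strong $L^m$-convergence applies), while for the $y$-integration invoke the narrow convergence of $\rk$ together with the sub-quadratic growth bound $|\nabla_x W_t(x,y)| \leq d_3(1 + \|y\|^r)$ with $r < 2$ from (W4) and the uniform moment control \eqref{BoundSecMom}. A standard truncation splitting the $y$-integration into $\ball_R(0)$ and its complement handles the lack of compact support in $y$; the time-integrated continuity furnished by (W4) absorbs the replacement of the discrete times $\tk$ by the continuous time variable.

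The transport term $-\sum_k \iint (y-x) \cdot \nabla \psi(\tk, y)\, \dd \bp^k_{\btau_n}(x,y)$ is the heart of the argument. A second-order Taylor expansion in the $x$ variable gives
\begin{equation*}
(y-x) \cdot \nabla \psi(\tk, y) = \psi(\tk, y) - \psi(\tk, x) + R^k(x,y), \qquad |R^k(x,y)| \leq \tfrac{1}{2} \|D^2 \psi\|_\infty \|y-x\|^2,
\end{equation*}
so integrating against $\bp^k_{\btau_n}$ (which has marginals $\rkm$ and $\rk$) produces the telescoping difference $\int_\Rd \psi(\tk, y) \rk(y) \dy - \int_\Rd \psi(\tk, x) \rkm(x) \dx$ with an aggregate remainder bounded by $\tfrac{1}{2} \|D^2\psi\|_\infty \sum_k \W^2(\rkm, \rk) \leq C \btau_n$ via \eqref{boundsumdk}, hence vanishing. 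Discrete summation by parts reorganizes the telescoping sum into
\begin{equation*}
\int_\Rd \psi(\tau_1, x) \rho_0(x) \dx + \sum_{j=1}^{N-1} \int_\Rd \bigl[\psi(t^{j+1}_{\btau_n}, x) - \psi(t^j_{\btau_n}, x)\bigr] \rho^j_{\btau_n}(x) \dx,
\end{equation*}
the boundary contribution at $t^N_{\btau_n}$ vanishing since $\psi$ is supported in $[0, T)$. The first summand converges to $\int_\Rd \psi(0,x) \rho_0(x) \dx$ as $\tau_1 \to 0$, while the second is a Riemann-type sum with weights $\tau_{j+1}$ and integrand approximately $\int_\Rd \partial_t \psi(t^j_{\btau_n}, x) \rho^j_{\btau_n}(x) \dx$, converging to $\int_0^T \!\!\int_\Rd \partial_t \psi(t,x) \rho_*(t,x) \dx \dt$ by the pointwise narrow convergence of $\overline{\rho}_{\btau_n}(t)$ combined with the uniform equicontinuity of $\partial_t \psi$.

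The main obstacle is the drift term: the integrand $\nabla_x W_t(x,y)$ is not compactly supported in $y$, so narrow convergence of the product measure $\rk(x)\rk(y)$ on $\Rd \times \Rd$ alone would not suffice. The crucial ingredient is the sub-quadratic growth built into (W4), coupled with the uniform bound on $\M(\rk)$ from Lemma \ref{MoreBoundsMMS}, providing the tail control needed to truncate in $y$ and pass to the limit. The remaining steps are refinements of the classical JKO machinery.
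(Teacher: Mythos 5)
Your proposal is correct and follows essentially the same route as the paper's proof: multiply the approximate Euler--Lagrange identity \eqref{MMsequality} by $\tau_k$, sum, pass to the limit in the diffusion term via the $L^m(0,T;L^m(\Omega))$-convergence, in the interaction term via narrow convergence plus the sub-quadratic growth/truncation in $y$ and the $\tilde\alpha$-continuity in time, and in the transport term via the second-order Taylor expansion, telescoping/summation by parts, and the $O(\btau_n)$ remainder from \eqref{boundsumdk}. The only differences are cosmetic (freezing $\psi$ at $\tk$ rather than $t^{k-1}_{\btau_n}$, and the orientation of the optimal plan $\bp^k_{\btau_n}$), and your attribution of the gradient growth bound to (W4) is in fact the correct label from Assumption \ref{DefRegWintro}.
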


\begin{proof}
	Let $\mathcal{T}_n$ be a partition with sufficiently small $\btau_n$ and let $\overline{\rho}_{\btau_n}$ be the corresponding piecewise constant interpolation obtained by the Minimizing Movement scheme. We know by Theorem \ref{ExistenceMMSLimitcurvePC}  this sequence converges locally with respect to the narrow convergence to the limit curve $\rho_* \in \mathrm{AC}^2 \left( \left[0,\infty \right); \Paz \right)$ with $\rho_* (t) \in \mathcal{K}_m$  and by construction we have that $\lim_{t \searrow 0} \rho_*(t) = \rho_*(0) = \lim_{\tau \searrow 0} \rho_{\tau} (0) = \rho_0$ in $\left( \Pzwei , \W \right)$. 

To prove that $\rho_*$ solves the weak formulation fix $\psi \in  \mathcal{C}_c^{\infty} \left( \left[0,\infty \right) \times \Rd \right)$ and choose $T>0$ and $\Omega \Subset \Rd$ such that $supp \ \psi \subset \left[0,T\right]\times\Omega$. Insert $\psi(t^{k-1}_{\btau_n},x)$ for $\psi(x)$ and sum the approximate Euler-Lagrange equations \eqref{MMsequality} from $k=1$ to $k_n(T)+1$ to obtain
	\begin{align}
	\begin{split}
		  &  \int_0^{t_{\btau_n}^{k_n(T)+1}} \iint\limits_{\Rd\times \Rd}  F_n (t,x,y) G_n(t,x) \overline{\rho}_{\btau_n} (t,x)  \overline{\rho}_{\btau_n} (t,y) \dxy \dt - \int_0^{t_{\btau_n}^{k_n(T)+1}}\int_{\Rd}  H_n(t,x) \overline{\rho}_{\btau_n} (t,x)^m \dx \dt \\
			= &  - \sum_{k=1}^{k_n(T)+1}  \ \  \iint\limits_{\Rd\times\Rd} (y-x) \nabla \psi(t^{k-1}_{\btau_n},y) \ \mathrm{d} \bp_{\btau_n}^{k}(x,y)  
			\end{split} \label{SumMMsequality}
	\end{align}
	with the two auxiliary functions $F_n$ and $G_n$, defined by
			\begin{equation*}
		F_n(t,x,y):= \nabla_x W ({t^{k}_{\btau_n}},x,y), \qquad G_n(t,x)= \nabla_x \psi(t^{k-1}_{\btau_n},x ), \qquad  H_n(t,x):= \Delta \psi(t^{k-1}_{\btau_n},x ) \qquad \mathrm{for} \ t \in \left(t^{k-1}_{\btau_n}, t^{k}_{\btau_n} \right].
	\end{equation*}	
	Now consider each integral in \eqref{SumMMsequality} separately. To calculate the limit of the first integral we begin with proving 
			\begin{equation}
			\lim_{\ntoinf} \int_\Rd F_n(t,x,y) \overline{\rho}_{\btau_n}(t,y) \dy = \int_\Rd \nabla_x W(t,x,y) \rho_*(t,y) \dy \label{AuxConvExistLimit}
		\end{equation}	
	and that this limit is uniform with respect to $t$ and $x$. So fix $t \in \left[0,T\right]$ and $x \in \Omega$ and let $d_4,r,\tilde{\alpha}$ as in assumption (W5). Further, fix some arbitrary $R>0$ and split up the difference of the two terms, i.e.,
		\begin{align*}
			\left\|  \int_\Rd F_n(t,x,y) \overline{\rho}_{\btau_n}(t,y) - \nabla_x W(t,x,y) \rho_*(t,y) \dy  \right\|\leq&   \left\| \int_{\mathds{B}_R} \nabla_x W(t,x,y)  \left( \overline{\rho}_{\btau_n}(t,y) - \rho_*(t,y)  \right) \dy   \right\| \\
			 + & \left\| \int_{\mathds{B}^c_R} \nabla_x W(t,x,y)  \left( \overline{\rho}_{\btau_n}(t,y) - \rho_*(t,y)  \right) \dy   \right\| \\
			 +& \left\|\int_\Rd \overline{\rho}_{\btau_n}(t,y) \left( F_n(t,x,y) - \nabla_x W(t,x,y) \right)  \dy  \right\| .
		\end{align*}
The first term converges to zero, as $\ntoinf$, since $\nabla_x W$ is continuous, the domain of integration is compact, and $\overline{\rho}_{\btau_n}(t)$ converges narrowly to $\rho_*(t)$. To bound the second term we use the assumption (W5) to obtain
		\begin{align*}
			\left\| \int_{\mathds{B}^c_R} \nabla_x W(t,x,y)  \left( \overline{\rho}_{\btau_n}(t,y) - \rho_*(t,y)  \right) \dy   \right\| & \leq d_4 \int_{\mathds{B}^c_R} \left(1+ \left\|y\right\|^r \right) \left( \overline{\rho}_{\btau_n} (t,y) + \rho_*(t,y)  \right) \dy  \\
			& \leq d_4 \left( \int_{\mathds{B}^c_R} \left(\overline{\rho}_{\btau_n} (t,y) + \rho_*(t,y) \right) \dy + R^{r-2} \left( \M(  \overline{\rho}_{\btau_n} (t) )+ \M( \rho_*(t) ) \right) \right). 
		\end{align*}
	The first integral converges, as $\ntoinf$, due to the Portmanteau Theorem \cite{klenke} and the r.h.s. is bounded by the classical estimate \eqref{BoundSecMom}. Hence we obtain in the limit $\ntoinf$
			\begin{equation*}
				\limsup_\ntoinf \left\| \int_{\mathds{B}^c_R} \nabla_x W(t,x,y)  \left( \overline{\rho}_{\btau_n}(t,y) - \rho_*(t,y)  \right) \dy   \right\|  \leq d_4 \left( 2 \int_{\mathds{B}^c_R} \rho_*(t,y) \dy +  R^{r-2} C(T,\tau_*, \rho_0) \right).
				\end{equation*}
	Since $R$ was arbitrarily chosen and $r<2$, we conclude, with $R \rightarrow \infty$, the limit of this term is equal to zero. The third term converges also to zero, since by the absolute continuity assumption of (W5) we have
			\begin{align*}
				\left\|\int_\Rd \overline{\rho}_{\btau_n}(t,y) \left( F_n(t,x,y) - \nabla_x W(t,x,y) \right)  \dy  \right\|  & \leq \int_\R \int_{t_{\btau_n}^{k_n(t)}}^t \tilde{\alpha}(r) \dr \left(1+ \left\|y\right\|^2 \right) \overline{\rho}_{\btau_n}(t,y)  \dy \\
				&\leq \int_{t- \btau_n }^t \tilde{\alpha}(r) \dr \left(1 + \M(\overline{\rho}_{\btau_n}(t,y) ) \right).
			\end{align*}
			The first factor converges uniformly to zero and the second factor is bounded by the classical estimate \eqref{BoundSecMom}, again. Hence, we have shown that the limit in \eqref{AuxConvExistLimit} is uniform in $x$ and $t$. This result in combination with the uniform convergence of $G_n$ to $\nabla_x \psi(t,x)$ and the $L^m(0,T;L^m(\Omega))$-convergence of $\overline{\rho}_{\btau_n}$ to $\rho_*$, we deduce 
	\begin{align*}
\lim_\ntoinf \int_0^{t_{\btau_n}^{k_n(T)+1}} \iint\limits_{\Rd\times \Rd}  \tilde{W}_n (t,x,y) \overline{\rho}_{\btau_n} (t,x)  \overline{\rho}_{\btau_n} (t,y) \dxy \dt =   \iiint\limits_{\left[0,T\right]\times\Rd\times \Rd} \nabla_x W_t (x,y) \nabla \psi(t,x) \rho_* (t,x)  \rho_* (t,y)  \mathrm{d} x \mathrm{d} y\mathrm{d} t.
	\end{align*}
The limit of the second integral of \eqref{SumMMsequality} follows by the uniform convergence of $H_n$ to $\Delta \psi$ and by the $L^m(0,T;L^m(\Omega))$-convergence of $\overline{\rho}_{\btau_n}$, i.e.:
	\begin{equation*}
	\lim_\ntoinf \int_0^{t_{\btau_n}^{k_n(T)+1}}\int_{\Rd}  \tilde{\psi}_n(t,x) \overline{\rho}_{\btau_n} (t,x)^m \dx \dt = \int_0^{T}\int_{\Rd}  \Delta \psi(t,x)  \rho_* (t,x)^m \dx \dt.
	\end{equation*}
In order to calculate the limit of the right-hand-side of \eqref{SumMMsequality} notice that by Taylor's formula we can expand the integrand as follows
	\begin{align*}
		 & \sum_{k=1}^{k_n(T)+1}   \iint\limits_{\Rd\times\Rd} (y-x) \nabla \psi(t^{k-1}_{\btau_n},y) \ \mathrm{d} \bp_{\btau_n}^{k}(x,y)   \\
		 =& \sum_{k=1}^{k_n(T)+1}   \iint\limits_{\Rd\times\Rd}  \psi(t^{k-1}_{\btau_n},x) - \psi(t^{k-1}_{\btau_n},y) + \mathcal{O}( \left\|x-y\right\|^2 ) \ \mathrm{d} \bp_{\btau_n}^{k}(x,y) \\
		=& \sum_{k=1}^{k_n(T)+1}    \int_\Rd \left( \rho^{k}_{\btau_n}(x) - \rho^{k-1}_{\btau_n}(x) \right) \psi(t^{k-1}_{\btau_n},x) \dx + \sum_{k=1}^{k_n(T)+1}  \mathcal{O}(\W^2(  \rho^{k}_{\btau_n},\rho^{k-1}_{\btau_n}) ).
		\end{align*}
		Furthermore, rearrange the first term and use the classical estimate \eqref{boundsumdk} to bound the second term, to obtain
		\begin{align*}
		\sum_{k=1}^{k_n(T)+1}   \iint\limits_{\Rd\times\Rd} (y-x) \nabla \psi(t^{k-1}_{\btau_n},y) \ \mathrm{d} \bp_{\btau_n}^{k}(x,y)=&  - \int_0^T \int_\Rd \overline{\rho}_{\btau_n}(t,x) \partial_t \psi(t,x) \dx \dt - \int_\Rd \rho_0(x) \psi(0,x)\dx + \mathcal{O}( \btau_n ).
	\end{align*}
In combination with the narrow convergence of $\overline{\rho}_{\btau_n}$ we can calculate the limit of the third term of \eqref{SumMMsequality} and is given by
		\begin{equation*}
			\lim_\ntoinf \sum_{k=1}^{k_n(T)+1}   \iint\limits_{\Rd\times\Rd} (y-x) \nabla \psi(t^{k-1}_{\btau_n},y) \ \mathrm{d} \bp_{\btau_n}^{k}(x,y) \dxy = - \int_0^T \int_\Rd \rho_*(t,x) \partial_t \psi(t,x) \dx \dt - \int_\Rd \rho_0(x) \psi(0,x)\dx. 
		\end{equation*}
		These results shows that $\rho_*$ satisfies
			\begin{align*}
			 &  \iiint\limits_{\left[0,T\right]\times\Rd\times \Rd} \nabla_x W_t (x,y) \nabla \psi(t,x) \rho_* (t,x)  \rho_* (t,y)  \mathrm{d} x \mathrm{d} y\mathrm{d} t -\int_0^{T}\int_{\Rd}  \Delta \psi(t,x)  \rho_* (t,x)^m \dx \dt \\
			= & \int_0^T \int_\Rd \rho_*(t,x) \partial_t \psi(t,x) \dx \dt + \int_\Rd \rho_0(x) \psi(0,x)\dx,
			\end{align*}
			yielding that $\rho_*$ is a distributional solution to \eqref{FPeq}.
\end{proof}


\section{High-frequency limit}\label{sec:high}

This section is devoted to the time-homogenization of the evolution systems and especially to the high-frequency limit of the Fokker Planck equation, both in the $\lambda$-convex and not-$\lambda$-convex case. As shown in the previous chapter this scheme is well-defined (Theorem \ref{ExistenceMMS}) and $\overline{u}_{\btau,\omega}$ converges to a curve of steepest descent $u_\omega$ of the functional $\mathcal{E} + \mathcal{P}_{\omega t}$ (Theorem \ref{thm:abstractgf}(a)). In order to show that the limit, as $\omega \rightarrow \infty$, of the solutions $u_\omega$ exists, in either the case considered in chapter \ref{FPeq} or in the $\lambda$-convex case considered in \cite{fg}, we have to refine the assumptions to the perturbation term $\mathcal{P}_t$ and prove an $\omega$-independent version of the classical estimates from Theorem \ref{BoundsMMS}.
	
		\begin{assumption} \label{DefRegPHF}
		The periodic perturbation functional $\mathcal{P}_t: \left[0,\infty \right) \times \X \rightarrow \R$ satisfies the regularity conditions (P1)--(P4) and additionally:
			\begin{enumerate}[({P}1)]
\setcounter{enumi}{4}
				\item There exists $L \geq 0$ such that for all $u,v \in \X$ and $t \in \left[0,\infty \right)$
					\begin{equation*}
						\left|(\mathcal{P}_t(u) - \overline{\mathcal{P}}(u)) - (\mathcal{P}_t(v)-\overline{\mathcal{P}}(v)) \right| \leq L  \bd(u,v),
					\end{equation*}
					where $\overline{\mathcal{P}}$ denotes the mean of $\mathcal{P}_t$.
			\end{enumerate}	
	\end{assumption}

This spatial Lipschitz-continuity of the perturbation term $\mathcal{P}_t$ yields now the classical bounds \emph{independent} of  $\omega$.

\begin{lemma}[Classical estimates revisited]\label{BoundsMMsHF}
Let $u_0 \in \mathcal{D}\left(\mathcal{E}\right)$ and $u_\omega$ the solution of the Fokker-Planck equation \eqref{FPeq} obtained by the Minimizing movement scheme. For fixed $T>0$, there exists a constant $C(T,\tau_*,u_0)$, only depending on $T, \tau_*$ and $u_0$, such that for all $\omega >0$ and $t\leq T$ there holds:
		\begin{align}
		\left\||u_\omega '|\right\|_{L^2(0,T)} & \leq  C(T,\tau_*,u_0) , \label{boundsumdkw} \\
		\mathcal{E}(u_\omega(t)) & \leq C(T,\tau_*,u_0), \label{BoundFkw} \\
		\bd^2(u_*, u_\omega (t) )  & \leq C(T,\tau_*,u_0) \label{bounddnw}.
		\end{align}
\end{lemma}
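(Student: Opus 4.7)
The structural observation that makes (P5) effective is that $\mathcal{P}_{\omega t}$ admits the additive splitting $\mathcal{P}_{\omega t}=\bar{\mathcal{P}}+\tilde{\mathcal{P}}_{\omega t}$, with the fluctuation $\tilde{\mathcal{P}}_t:=\mathcal{P}_t-\bar{\mathcal{P}}$ being $L$-Lipschitz on $(\X,\bd)$ \emph{uniformly in $t$ and hence in $\omega$}. In contrast, the classical bound of Theorem \ref{BoundsMMS} only controls the time-variation of $\mathcal{P}_{\omega t}$ through the integral $\int_0^{\omega T}\alpha(r)\dr$ from (P3), which blows up as $\omega\to\infty$. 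The plan is therefore to redo the discrete energy argument with the averaged effective energy $\tilde{\mathcal{E}}:=\mathcal{E}+\bar{\mathcal{P}}$ in place of $\mathcal{E}$, and to absorb the oscillatory remainder via the spatial Lipschitz estimate.

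Concretely, for any partition with $\btau$ sufficiently small and any $\omega>0$, the definition of $\ukw\in J_{\tau_k,\omega t^k_\btau}(\ukmw)$ tested against the previous iterate gives
\begin{equation*}
\frac{1}{2\tau_k}\bd^2(\ukw,\ukmw)+\tilde{\mathcal{E}}(\ukw)\le \tilde{\mathcal{E}}(\ukmw)+\bigl[\tilde{\mathcal{P}}_{\omega t^k_\btau}(\ukmw)-\tilde{\mathcal{P}}_{\omega t^k_\btau}(\ukw)\bigr].
\end{equation*}
By (P5) the bracketed term is at most $L\bd(\ukw,\ukmw)$, and Young's inequality bounds this by $\frac{1}{4\tau_k}\bd^2(\ukw,\ukmw)+L^2\tau_k$. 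Summing for $k=1,\dots,N$ with $t^N_\btau\le T$ and telescoping yields
\begin{equation*}
\sum_{k=1}^N\frac{1}{4\tau_k}\bd^2(\ukw,\ukmw)+\tilde{\mathcal{E}}(\uknw)\le \tilde{\mathcal{E}}(u_0)+L^2 T,
\end{equation*}
which is an estimate \emph{uniform in both $\btau$ and $\omega$}. Since $\bar{\mathcal{P}}$ inherits from (P2) the same quadratic lower bound in terms of $\bd(u_*,\cdot)$ as $\mathcal{P}_t$ (by integrating over one period), $\tilde{\mathcal{E}}$ is coercive, and we read off the $\omega$-uniform discrete analogues of \eqref{BoundFkw} for $\mathcal{E}(\uknw)$. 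The distance bound \eqref{bounddnw} at the discrete level follows from the triangle inequality together with Cauchy--Schwarz, exactly as in the proof of Theorem \ref{BoundsMMS}: $\bd(u_*,\uknw)\le \bd(u_*,u_0)+\sqrt{T}\,\bigl(\sum_{k=1}^N\tau_k^{-1}\bd^2(\ukw,\ukmw)\bigr)^{1/2}$.

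To transfer these estimates from the piecewise constant interpolation to the curve $u_\omega$ produced by Theorem \ref{thm:abstractgf}(a), I use the convergence machinery of Section \ref{sec:GradFlow}: at fixed $\omega$ and along a vanishing sequence $\btau_n\to 0$ one has $\bar{u}_{\btau_n,\omega}(t)\sar u_\omega(t)$ pointwise in $t$, and the discrete derivatives $|\bar{u}_{\btau_n,\omega}'|$ converge weakly in $L^2(0,T)$ to some dominating function $A\ge |u_\omega'|$ a.e., exactly as in Theorem \ref{ExistenceMMSLimitcurvePC}. Weak lower semi-continuity of the $L^2$-norm then yields \eqref{boundsumdkw}, lower semi-$\bs$-continuity of $\mathcal{E}$ (assumption (E1)) together with $\bs$-continuity of $\bar{\mathcal{P}}$ on $\bd$-bounded sets yields \eqref{BoundFkw}, and $\bs$-compatibility of $\bd$ yields \eqref{bounddnw}. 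I do not anticipate a substantive obstacle: the only delicate point is the verification that $\tilde{\mathcal{E}}$ is coercive with constants independent of $\omega$, which is settled by averaging the pointwise inequalities of Assumption \ref{DefRegP} over $[0,1]$.
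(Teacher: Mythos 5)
Your core argument is the same as the paper's: test the minimality of $\ukw$ against $\ukmw$, split $\mathcal{P}_{\omega t}=\bar{\mathcal{P}}+\tilde{\mathcal{P}}_{\omega t}$, control the fluctuation by (P5) and Young's inequality with the $\frac{1}{4\tau_k}\bd^2$ kick-back, telescope, and pass to the limit by lower semi-$\bs$-continuity. Up to and including the display
\begin{equation*}
\sum_{k=1}^N\frac{1}{4\tau_k}\bd^2(\ukw,\ukmw)+\tilde{\mathcal{E}}(\uknw)\le \tilde{\mathcal{E}}(u_0)+L^2 T
\end{equation*}
your proof coincides with the paper's, and your observation that the coercivity of $\tilde{\mathcal{E}}=\mathcal{E}+\bar{\mathcal{P}}$ follows by averaging (E2)/(P2) over one period is correct.

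There is, however, a gap in how you close the distance estimate. The inequality above does \emph{not} yet bound anything, because $\tilde{\mathcal{E}}(\uknw)$ is only bounded below by $c_*-\frac{1}{2\tau_*}\bd^2(u_*,\uknw)$, so the sum of squared increments is controlled only up to a multiple of $\bd^2(u_*,\uknw)$ --- the very quantity you then want to bound via triangle inequality and Cauchy--Schwarz. Writing $D:=\bd(u_*,\uknw)$, your chain gives
\begin{equation*}
D\le \bd(u_*,u_0)+\sqrt{T}\Bigl(\sum_{k=1}^N\tau_k^{-1}\bd^2(\ukw,\ukmw)\Bigr)^{1/2}\le \bd(u_*,u_0)+2\sqrt{T}\Bigl(\tilde{\mathcal{E}}(u_0)+L^2T-c_*\Bigr)^{1/2}+\sqrt{\tfrac{2T}{\tau_*}}\,D,
\end{equation*}
which can be absorbed into the left-hand side only when $2T<\tau_*$; for a general time horizon the argument as written does not close. (Your claim that this is ``exactly as in the proof of Theorem \ref{BoundsMMS}'' is not accurate: that proof telescopes $\frac12\bd^2(u_*,\uk)-\frac12\bd^2(u_*,\ukm)$, kicks back the $\frac14\bd^2(u_*,\ukn)$ term, and then invokes the discrete Gronwall lemma, which is precisely what handles arbitrary $T$.) The paper's proof of the present lemma repeats that Gronwall step, noting that all constants entering it are now $\omega$-independent. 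Your version is repairable either by doing the same, or by iterating your small-time absorption over finitely many subintervals of length less than $\tau_*/2$, restarting from the (already bounded) discrete iterate at the end of each subinterval; but one of these steps must be supplied. The remainder of your limit passage ($\btau_n\to 0$ at fixed $\omega$, weak $L^2$ convergence of the discrete derivatives, lower semicontinuity) matches the paper and is fine.
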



\begin{proof}
We will prove the estimates on a discrete level and then we use the lower semi-$\bs$-continuity of each bound to obtain the desired result. For this purpose fix a partition $\mathcal{T}$, with $\btau < \frac{\tau_*}{2}$, and let $\left( \ukw \right)_\kinN$ be the corresponding discrete solution. As in the proof of Theorem \ref{BoundsMMS} we derive for the discrete solutions $(\ukw)_\kinN$ the inequality \eqref{AuxClasEstMMs}. Exploit the Lipschitz-continuity of $\mathcal{P}_t$ and use Young's inequality to further estimate the right-hand-side to obtain
\begin{align*}
\sum_{k=1}^N \frac{1}{2\tau_k } \dkw & \leq \mathcal{E}(u_0) - \eknw + \sum_{k=1}^N \left( \pkmw - \pkw \right)  \\
	& \leq \mathcal{E}(u_0) - \eknw + \sum_{k=1}^N L \bd(\ukw , \ukmw ) + \overline{\mathcal{P}}(\ukmw) - \overline{\mathcal{P}}(\ukw) \\
		& \leq \mathcal{E}(u_0) + \overline{\mathcal{P}}(u_0) - \eknw  - \overline{\mathcal{P}}(\uknw ) + \sum_{k=1}^N \left(L^2 \tau_k + \frac{1}{4\tau_k} \bd^2(\ukw, \ukmw) \right).
			\end{align*}
			A kick-back argument and the coercivity of $\mathcal{E}$ and $\overline{\mathcal{P}}$ yields now
				\begin{equation*}
				\sum_{k=1}^N \frac{1}{4\tau_k } \dkw  \leq \mathcal{E}(u_0) +  \overline{\mathcal{P}}(u_0)  +  \frac{1}{2\tau_*}\bd^2(u_*,\uknw) - c_* + T L^2.
				\end{equation*}
	Perform a to the foregoing proof similar calculation to get
		\begin{equation*}
			\bd^2(u_*,\uknw) \leq 2 \tau_* \left( \mathcal{E}(u_0) +  \overline{\mathcal{P}}(u_0)  -c_* +T L^2 \right) + 2 \bd^2(u_*,u_0) + \frac{2}{\tau_*} \sum_{k=1}^N \tau_k \bd^2(u_*, \ukw).
		\end{equation*}
	Notice that every constant appearing in this equation is independent of $\omega$. Now repeat the remaining part of the proof with the discrete Gronwall's lemma \cite[Lemma 3.2.4]{ags} to get the desired estimates on the discrete level, which are independent of $\omega$, i.e., we have for all $N$ with $t^N_\btau < T$:
		\begin{align}
		\sum_{k=1}^N \frac{1}{2\tau_k} \dkw & \leq  C(T,\tau_*,u_0) , \qquad \mathcal{E}(\uknw) \leq C(T,\tau_*,u_0), \ \qquad	\bd^2(u_*,\uknw)  \leq C(T,\tau_*,u_0) \label{boundw}.
		\end{align}
	 Now consider a family $\mathcal{T}_n$ of admissible partitions, then the corresponding piecewise constant interpolation $\overline{u}_{\btau_n,\omega}$ converges with respect to the weak topology $\bs$ to $u_\omega$. Since $\mathcal{E}$ and $\bd$ are lower semi-$\bs$-continuous, we have
		\begin{align*}
		\left\||u_\omega '|\right\|_{L^2(0,T)} & \leq \liminf_\ntoinf \sum_{k=1}^{k_n(T)+1} \frac{1}{2\tau_k} \dkw  \leq  C(T,\tau_*,u_0) , \\
			\mathcal{E}(u_\omega(t)) & \leq \liminf_{\ntoinf} \mathcal{E}( \overline{u}_{\btau_n, \omega}(t)) \leq C(T,\tau_*,u_0), \\
			\bd^2(u_*, u_\omega (t) ) &\leq \liminf_{\ntoinf}  \bd^2(u_*, \overline{u}_{\btau_n, \omega}(t)) \leq C(T,\tau_*,u_0).	\qedhere		
		\end{align*}
	\end{proof}
	
		\subsection{High-frequency limit for abstract $\lambda$-convex gradient flows}

Next, we show the second part of Theorem \ref{thm:abstractgf}, i.e., we prove by a direct calculation that in the $\lambda$-convex case the sequence of solutions $u_\omega$ will converge to the solution $u_\infty$ of the time-averaged evolution system. In particular, $\lambda$-convexity of a time-dependent functional $\mathcal{F}_t$ in a metric space is here defined as follows, e.g. see \cite{fg}: There exists a function $\lambda(t)$ such that for every triple $u,v_0,v_1 \in \X$, there exists a curve $\gamma:\left[0,1\right] \rightarrow \X$ with $\gamma(0)=v_0, \ \gamma(1)=v_1$ and 
						\begin{equation}
						\Phi(\tau,t,u,\gamma_s) \leq (1-s) \Phi(\tau,t,u, v_0) + s \Phi(\tau,t,u, v_1) - \frac{1}{2} \left( \frac{1}{\tau} + \lambda(t) \right) s(1-s) \bd^2(v_0,v_1) . \label{Stronglambdaconvex}
					\end{equation}
where $\Phi$ is the Moreau-Yosida functional of $\mathcal{F}_t$. It is known, see \cite{ags} for further details, that $\mathcal{H}$ and $\mathcal{U}_m$ are $0$-convex and that $\mathcal{W}_t$ is $\lambda(t)$-convex if and only if $W_t$ is $\lambda(t)$-convex in $\Rd\times \Rd$. For a derivation of an exact convergence rate, we will use the evolution variational inequality given in \cite[Equation (5.15)]{fg}. Note that the classical estimates do not rely on the compactness of the sublevels of $\mathcal{E}$ or the abstract $\lambda$-convexity of the Moreau-Yosida functional.

		\begin{proof}
		Without loss of generality we assume that the mean of $\mathcal{P}_t$ is equal to zero (otherwise redefine $\tilde{\mathcal{E}}:=\mathcal{E} + \overline{\mathcal{P}}$ and $\tilde{\mathcal{P}_t}:= \mathcal{P}_t - \overline{\mathcal{P}}$), and with $\lambda \leq 0$ be constant. By \cite[Equation (5.15)]{fg} and \cite[Thm. 4.0.4]{ags}, $u_\omega$ and $u_\infty$ satisfy the following two evolution variational inequalities, respectively for all $v\in \X$: 
	\begin{align}
		\frac{1}{2} \bd^2(u_\omega(t),v) - \frac{1}{2}\bd^2(u_\omega (s),v) + \int_s^t \frac{\lambda}{2} \bd^2(u_\omega (r),v) + \mathcal{E}(u_\omega (r))+ \mathcal{P}_{\omega r}(u_\omega (r))\dr & \leq \int_s^t \mathcal{E}(v) + \mathcal{P}_{\omega r}(v)\dr, \label{EVIw} \\
		\frac{1}{2} \bd^2(u_\infty(t),v) - \frac{1}{2}\bd^2(u_\infty (s),v) + \int_s^t \frac{\lambda}{2} \bd^2(u_\infty (r),v) + \mathcal{E}(u_\infty (r)) \dr & \leq \int_s^t \mathcal{E}(v)\dr. \label{EVIa}
	\end{align}
		In order to prove the statement, we apply a Gronwall type argument, i.e., we differentiate the square of the distance of $u_\omega(t)$ and $u_\infty(t)$. Since the solution curves are absolutely continuous, this step is valid and we can apply \cite[Lemma 4.3.4]{ags} to obtain
			\begin{align*}
			\left.\frac{ \mathrm{d}}{\mathrm{d} s } \frac{1}{2} \bd^2(u_\omega(s) , u_\infty(s) ) \right|_{s=t}  \leq & \limsup_{h \searrow 0} \frac{ \frac{1}{2}  \bd^2(u_\omega(t) , u_\infty(t) ) - \frac{1}{2}  \bd^2(u_\omega(t-h) , u_\infty(t) ) }{h}  \\ + & \limsup_{h\searrow 0} \frac{ \frac{1}{2}  \bd^2(u_\omega(t) , u_\infty(t+h) ) - \frac{1}{2}  \bd^2(u_\omega(t) , u_\infty(t) ) }{h}.
			\end{align*}
			The first term on the right-hand-side can be estimated using the evolution variational equation \eqref{EVIw}, the lower semi-$\bs$-continuity of $\mathcal{E}$ and $\bd$, and the $\bs$-continuity of $\mathcal{P}_t$ to get
				\begin{align*}
					& \limsup_{h\searrow 0} \frac{ \frac{1}{2}   \bd^2(u_\omega(t) , u_\infty(t) ) -  \frac{1}{2} \bd^2(u_\omega(t-h) , u_\infty(t) ) }{h} \\
					\leq&  \limsup_{h\searrow 0 }\frac{1}{h} \int_{t-h}^t \left[ \mathcal{E}(u_\infty(t) ) + \mathcal{P}_{\omega r}(u_\infty(t) ) - \frac{\lambda}{2} \bd^2(u_\omega (r), u_\infty(t) ) - \mathcal{E}(u_\omega (r))- \mathcal{P}_{\omega r}(u_\omega (r)) \right]\dr  \\
					\leq &  \mathcal{E}(u_\infty(t) ) + \mathcal{P}_{\omega t}(u_\infty(t) ) - \frac{\lambda}{2} \bd^2(u_\omega (t), u_\infty(t) ) - \mathcal{E}(u_\omega (t))- \mathcal{P}_{\omega t}(u_\omega (t)) .
				\end{align*}
		Analogously, using the evolution variational equation \eqref{EVIa} we obtain for the limit of the second term
			\begin{equation*}
				\limsup_{h\searrow 0} \frac{ \frac{1}{2} \bd^2(u_\omega(t) , u_\infty(t+h) ) - \frac{1}{2}  \bd^2(u_\omega(t) , u_\infty(t) ) }{h} \leq  \mathcal{E}(u_\omega (t))-  \frac{\lambda}{2} \bd^2(u_\omega (t), u_\infty(t) ) - \mathcal{E}(u_\infty(t) )  .
			\end{equation*}
			Hence, by adding these results we get the following estimate 
				\begin{align*}
					\frac{ \mathrm{d}}{\mathrm{d} s } \left. \frac{1}{2}  \bd^2(u_\omega(s) , u_\infty(s) ) \right|_{s=t} \leq   \mathcal{P}_{\omega t}(u_\infty(t) ) -\mathcal{P}_{\omega t}(u_\omega (t)) - \lambda\bd^2(u_\omega (t), u_\infty(t) )
				\end{align*}
				from which we conclude with the differential form of Gronwall's inequality that 
					\begin{align*}
						\exp\left[  2 \lambda t  \right]  \bd^2(u_\omega(t) , u_\infty(t) )  \leq&  \int_0^t \exp\left[2  \lambda r \right] \left(  \mathcal{P}_{\omega r}(u_\infty(r) ) -\mathcal{P}_{\omega r}(u_\omega (r)) \right) \dr.
						\end{align*}
						After a rescaling of the time variable the r.h.s. can be decomposed, i.e.,
						\begin{align*}
						 \int_0^t \exp\left[ 2 \lambda r \right] \left(  \mathcal{P}_{\omega r}(u_\infty(r) ) -\mathcal{P}_{\omega r}(u_\omega (r)) \right) \dr = & \frac{1}{\omega} \int_0^{\omega t} \exp \left[2 \lambda \frac{r}{\omega} \right]   \left(  \mathcal{P}_{r}(u_\infty(\frac{r}{\omega}) ) -\mathcal{P}_{ r}(u_\omega (\frac{r}{\omega})) \right) \dr \\
						 = & \sum_{k=0}^{ \left\lfloor \omega t \right\rfloor-1} \frac{1}{\omega} \int_k^{k+1} \ \exp \left[2 \lambda \frac{r}{\omega} \right]   \left(  \mathcal{P}_{r}(u_\infty(\frac{r}{\omega}) ) -\mathcal{P}_{ r}(u_\omega (\frac{r}{\omega})) \right) \dr \\
						&  \ \ \ \ \ + \frac{1}{\omega} \int_{ \left\lfloor \omega t \right\rfloor}^{\omega t} \ \  \exp \left[2 \lambda \frac{r}{\omega} \right]   \left(  \mathcal{P}_{r}(u_\infty(\frac{r}{\omega}) ) -\mathcal{P}_{ r}(u_\omega (\frac{r}{\omega})) \right) \dr.
					\end{align*}
					The first term in this equation can be further expanded inserting two productive zeros, such that we have
					\begin{align*}
					\frac{1}{\omega} \sum_{k=0}^{ \left\lfloor \omega t \right\rfloor -1} \int_k^{k+1}  &  \ \exp \left[ 2 \lambda \frac{r}{\omega} \right]  \left(  \mathcal{P}_{r}(u_\infty(\frac{r}{\omega}) ) -\mathcal{P}_{ r}(u_\omega (\frac{r}{\omega})) \right) \dr \\
						= \frac{1}{\omega} \sum_{k=0}^{ \left\lfloor \omega t \right\rfloor-1} \int_k^{k+1} & \left[ \exp \left[ 2 \lambda \frac{r}{\omega} \right]   \left(  \mathcal{P}_{r}(u_\infty(\frac{r}{\omega}) ) -\mathcal{P}_{ r}(u_\omega (\frac{r}{\omega})) \right)  -  \exp \left[ 2 \lambda \frac{k}{\omega} \right]   \left(  \mathcal{P}_{r}(u_\infty(\frac{k}{\omega}) ) -\mathcal{P}_{ r}(u_\omega (\frac{k}{\omega})) \right)  \right.  \\
					 +& \left.  \left[\exp \left[ 2 \lambda \frac{r}{\omega} \right]- \exp \left[2  \lambda \frac{r}{\omega} \right] \right]   \left(  \mathcal{P}_{r}(u_\infty(\frac{k}{\omega}) ) -\mathcal{P}_{ r}(u_\omega (\frac{k}{\omega})) \right) \right] \dr \\ 
					= \frac{1}{\omega} \sum_{k=0}^{ \left\lfloor \omega t \right\rfloor-1} \int_k^{k+1} &  		\left[ \exp \left[2  \lambda \frac{r}{\omega} \right]   \left(  \mathcal{P}_{r}(u_\infty(\frac{r}{\omega}) ) -\mathcal{P}_{ r}(u_\omega (\frac{r}{\omega}) ) - \mathcal{P}_{r}(u_\infty(\frac{k}{\omega}) ) + \mathcal{P}_{ r}(u_\omega (\frac{k}{\omega})) \right)  \right. \\	 
								  +      &   \left. \ \exp \left[ 2 \lambda \frac{\zeta(r)}{\omega} \right]   \frac{2 \lambda (r-k)}{\omega}  \left(  \mathcal{P}_{r}(u_\infty(\frac{k}{\omega}) ) -\mathcal{P}_{ r}(u_\omega (\frac{k}{\omega})) \right)  \right] \dr. 	
					\end{align*}
					 for some $\zeta(r) \in \left[k, r \right]$. Exploit the Lipschitz continuity of $\mathcal{P}_t$, the absolute continuity of $u_\omega$, respectively of $u_\infty$, and use the estimates \eqref{boundsumdkw} and \eqref{bounddnw} to obtain the following upper bound for the modulus of the previous equation
						\begin{align*}
										& \left|	   \frac{1}{\omega} \sum_{k=0}^{ \left\lfloor \omega t \right\rfloor -1} \int_k^{k+1}  \exp \left[2  \lambda \frac{r}{\omega} \right]  \left(  \mathcal{P}_{r}(u_\infty(\frac{r}{\omega}) ) -\mathcal{P}_{ r}(u_\omega (\frac{r}{\omega})) \right) \dr   \right| \\
						\leq & \frac{1}{\omega} \sum_{k=0}^{ \left\lfloor \omega t \right\rfloor-1} \int_k^{k+1} 	\left[ \exp \left[ 2 \lambda \frac{r}{\omega} \right]   \left(  L \bd(u_\infty(\frac{r}{\omega}), u_\infty(\frac{k}{\omega}) ) + L \bd(u_\omega(\frac{r}{\omega}), u_\omega (\frac{k}{\omega}) )   \right) \right. \\
							& \qquad \qquad \quad \ \ \left. +    \exp \left[ 2 \lambda \frac{\zeta(r)}{\omega} \right] \frac{2 \left|\lambda\right|(r-k)}{\omega}   L \bd(u_\infty(\frac{k}{\omega}), u_\omega(\frac{k}{\omega}) ) \right]\dr \\
							\leq & \frac{L}{\omega} \sum_{k=0}^{ \left\lfloor \omega t \right\rfloor-1} \int_k^{k+1} 	\left[    \int_{\frac{k}{\omega}}^{\frac{r}{\omega} } |u'_\infty|(s) \dss + \int_{\frac{k}{\omega}}^{\frac{r}{\omega} } |u'_\omega|(s) \dss +\frac{2 \left|\lambda\right|}{\omega} \bd(u_\infty(\frac{k}{\omega}), u_\omega(\frac{k}{\omega}) )   \right] \dr \\
							\leq &\frac{L}{\omega} \int_0^{ \frac{\left\lfloor \omega t \right\rfloor}{\omega}} |u'_\infty|(s) + |u'_\omega|(s)  \dss +  \frac{L}{\omega} \sum_{k=0}^{ \left\lfloor \omega t \right\rfloor-1}  \frac{ 2 \left|\lambda\right|}{\omega} \bd(u_\infty(\frac{k}{\omega}), u_\omega(\frac{k}{\omega}) )   \\
							\leq & \frac{L\sqrt{T} }{\omega}   \left\| |u'_\infty| \right\|_{L^2(0,T)}+ \frac{ L\sqrt{T} }{\omega} \left\| |u'_\omega|\right\|_{L^2(0,T)}  + \frac{ 2 L \left|\lambda \right| T }{\omega}   C(T,\tau_*,u_0) \\
							\leq &  C(T,\tau_*,u_0) \frac{1}{\omega}.
					\end{align*}
					We estimate the remainder term of the starting equation accordingly with a combination of the Lipschitz continuity of $\mathcal{P}_t$ and estimate \eqref{bounddnw} such that we obtain
					\begin{align*}
\left|	 \frac{1}{\omega} \int_{ \left\lfloor \omega t \right\rfloor}^{\omega t} \exp \left[2 \lambda \frac{r}{\omega} \right]   \left(  \mathcal{P}_{r}(u_\infty(\frac{r}{\omega}) ) -\mathcal{P}_{ r}(u_\omega (\frac{r}{\omega})) \right) \dr \right|& \leq \frac{ \omega t -\left\lfloor \omega t \right\rfloor }{\omega} C(T,\tau_*,u_0) \leq C(T,\tau_*,u_0) \frac{1}{\omega}.
\end{align*}
Thus, combining these results we obtain
	\begin{equation*}
		\exp\left[ 2 \lambda t  \right] \frac{1}{2} \bd^2(u_\omega(t) , u_\infty(t) ) \leq C(T,\tau_*,u_0) \frac{1}{\omega}
	\end{equation*}
	yielding the desired uniform convergence rate of the $\rho_{\omega}$ to $\rho_\infty$ for every finite horizon $T$. 
		\end{proof}

		\subsection{High-frequency limit for the nonlinear Fokker-Planck equation}

We show that the second part of the main Theorem \ref{thm:fpe}, i.e., that also in the non-$\lambda$-convex case the solutions $\rho_\omega$ of the Fokker-Planck equation converge and the limit curve $\rho_\infty$ solves the time averaged Cauchy problem \eqref{limFPeq}--\eqref{eq:limicond}. For this purpose we  will make use of assumption (W6), which guarantees that also (P5) is satisfied and therefore also the $\omega$-independent classical estimates Lemma \ref{BoundsMMsHF} are valid.

\begin{proof}
At first, we prove the existence of a narrow convergent subsequence, which converges to an absolutely continuous curve. Note, by the estimates \eqref{BoundFkw} and \eqref{bounddnw} the densities $\rho_{\omega_n} (t)$ are contained for all $t \leq T$ in a set $K$, which is compact with respect to the narrow convergence. Moreover, by estimate \eqref{boundsumdkw} there exists a non-relabeled subsequence such that the metric velocity $|\rho_{\omega_n} '|$ converges weakly in $L^2(0,T)$ to $A \in L^2(0,T)$. To apply the Arzelà-Ascoli theorem we estimate now
	\begin{equation*}
		\limsup_{\omega \rightarrow \infty} \W (\rho_{\omega_n} (t), \rho_{\omega_n} (s)) \leq\limsup_{n \rightarrow \infty}\int_s^t |\rho_{\omega_n}  '|(r) \dr = \int_s^t A(r) \dr.
	\end{equation*}
Hence, by the refined Arzelà-Ascoli theorem \cite[Proposition 3.3.1]{ags} we obtain the existence of a limit curve $\rho_\infty \in \mathrm{AC}^2\left( \left[0,\infty\right),\Paz \right)$ such that $\rho_{\omega_n}$ converges pointwise with respect to the narrow convergence.

To obtain the $L^m(0,T;L^m(\Omega))$-convergence result, fix some $\Omega \Subset \Rd$ and note that one has also the improved regularity result with a constant $C(T,\tau_*, \rho_0)$ independent of $\omega$, i.e.,
			\begin{equation}
				\left\|  \rho_{\omega_n}^{\frac{m}{2}} \right\|_{L^2 \left( 0 ,T; H^1\left(\Rd\right) \right)}   \leq C(T,\tau_*,\rho_0), \label{HFLimitAuxIneq}
		\end{equation}
	since by Theorem \ref{BoundsMMsHF} the constants, appearing in the proof of Theorem \ref{h1bounds}, do not depended on $\omega$. Again, we apply the extension of the Aubin-Lions lemma (Theorem \ref{LmLmConv}) to the sequence $\rho_{\omega_n}$ with the auxiliary functionals $\mathcal{A}$ and $g$ as in the proof of Theorem \ref{ConvergencePiecewiseConstantInterpolation}. As before, due to the estimate \eqref{HFLimitAuxIneq}, the family $\rho_{\omega_n}$ is tight with respect to the normal, coercive integrand $\mathcal{A}$. To verify the relaxed averaged weak integral equi-continuity with respect to $g$ use that $|\rho_{\omega_n}'|$ converges weakly in $L^2_{\mathrm{loc}}\left( \left[0,\infty \right) \right)$ to $A$. This yields
		\begin{align*}
		\liminf_{h \searrow 0 } \limsup_{\ntoinf} \frac{1}{h} \int_0^h  \int_0^{T-t} g ( \left.\rho_{\omega_n}\right|_{\Omega} (s+t), \left.\rho_{\omega_n}\right|_{\Omega} (s)) \mathrm{d} s \dt &  \leq \liminf_{h \searrow 0 } \limsup_{\ntoinf} \frac{1}{h} \int_0^h  \int_0^{T-t} \W ( \rho_{\omega_n}(s+t), \rho_{\omega_n} (s)) \dss\dt \\
		& \leq \liminf_{h \searrow 0 } \limsup_{\ntoinf} \frac{1}{h} \int_0^h  \int_0^{T-t} \int_s^{s+t} |\rho_{\omega_n}'|(r)\dr \dss \dt \\	
		& \leq \liminf_{h \searrow 0 } \frac{1}{h} \int_0^h  \int_0^{T-t} \int_s^{s+t} A(r)\dr \dss \dt \\	
		& \leq \left\|A\right\|_{L^2(0,T)} \liminf_{h \searrow 0 } \frac{1}{h} \int_0^h  \int_0^{T-t} \sqrt{t} \dss \dt =0.
		\end{align*}
	By the extension of the Aubin-Lions Lemma \ref{LmLmConv} we get the desired convergence result for every compact set $\Omega $ and for every finite time horizon $T$. 
	
	At last we prove that the limit of $\rho_{\omega_n}$ solves the time averaged Fokker-Planck equation in a weak sense. Therefore, we calculate the limit, as $n \rightarrow \infty$, in the weak formulations of each $\rho_{\omega_n}$
					\begin{align}
					\begin{split}
			 &  \iiint\limits_{\left[0,T\right]\times\Rd\times \Rd} \nabla_x W_{\omega_n t} (x,y) \nabla \psi(t,x) \rho_{\omega_n}  (t,x)  \rho_{\omega_n} (t,y)  \mathrm{d} x \mathrm{d} y\mathrm{d} t -\int_0^{T}\int_{\Rd}  \Delta \psi(t,x)  \rho_{\omega_n} (t,x)^m \dx \dt \\
			= & \int_0^T \int_\Rd \rho_{\omega_n}  (t,x) \partial_t \psi(t,x) \dx \dt + \int_\Rd \rho_0(x) \psi(0,x)\dx.
			\end{split} \label{HFWF}
			\end{align}
			To deduce the limit of the first integral in \eqref{HFWF} we fix some $R>0$ and split the domain of integration in to $\mathds{B}_R \times \mathds{B}_R $ and the complement. Note, the solutions $\rho_\omega$ are uniformly bounded in $L^\infty(0,T;L^m(\Rd))$. This yields that $\rho_{\omega_n} (t,x)\rho_{\omega_n} (t,y)$ converges to $\rho_\infty(t,x)\rho_\infty(t,y)$ in $L^m(0,T;L^m(\mathds{B}_R \times \mathds{B}_R))$ and therefore, we also have that $\rho_{\omega_n} (t,x)\rho_{\omega_n} (t,y)$ converges to $\rho_\infty(t,x)\rho_\infty(t,y)$ in $L^1(0,T;L^1(\mathds{B}_R \times \mathds{B}_R))$. Furthermore, $\nabla W_{{\omega_n}  t} (x,y) \rightharpoonup^* \nabla \overline{W} (x,y)$ in $L^{\infty}(0,T)$ for every $x,y\in\Rd$ (see, for instance, \cite{braides2002}). Thus, also $\nabla W_{{\omega_n}  t}  \rightharpoonup^*   \nabla \overline{W}$ in $L^{\infty}(0,T;L^\infty(\mathds{B}_R \times \mathds{B}_R))$. In combination we have  
\begin{equation*}
	\nabla W_{{\omega_n}  t}(x,y) \rho_{\omega_n} (t,x)\rho_{\omega_n} (t,y)  \rightharpoonup^*  \nabla \overline{W}(x,y) \rho_\infty(t,x)\rho_\infty(t,y) \qquad  \mathrm{in} \  L^{\infty}(0,T;L^\infty(\mathds{B}_R \times \mathds{B}_R))
	\end{equation*}
	and since the domain of integration is compact we finally have
			\begin{align*}
			&\lim_{\ntoinf} \iiint\limits_{\left[0,T\right]\times\mathds{B}_R \times \mathds{B}_R} \nabla_x W_{{\omega_n} t} (x,y) \nabla \psi(t,x) \rho_{\omega_n}  (t,x)  \rho_{\omega_n} (t,y) \mathrm{d} x \mathrm{d} y\mathrm{d} t \\\quad&= \iiint\limits_{\left[0,T\right]\times\mathds{B}_R \times \mathds{B}_R } \nabla_x \overline{W} (x,y) \nabla \psi(t,x) \rho_\infty (t,x)  \rho_\infty (t,y)  \mathrm{d} x \mathrm{d} y\mathrm{d} t .
			\end{align*}
			To calculate the corresponding integral over the complement of $\mathds{B}_R \times \mathds{B}_R$, choose $R$ sufficiently large, such that  $ supp \ \psi \subset \mathds{B}_R$. Again, apply assumption (W5) to obtain
\begin{align*}
& \limsup_\ntoinf \left| \ \ \iiint\limits_{\left[0,T\right]\times ( \mathds{B}_R \times \mathds{B}_R )^c  } \nabla_x W_{{\omega_n} t} (x,y) \nabla \psi(t,x) \rho_{\omega_n}  (t,x)  \rho_{\omega_n} (t,y) \mathrm{d} x \mathrm{d} y\mathrm{d} t \right| \\
 \leq & \limsup_\ntoinf \left\| \nabla \psi \right\|_\infty d_4  \int_0^T \left(\int_{\mathds{B}_R} \rho_{\omega_n}(t,x) \dx + R^{r-2} M(\rho_{\omega_n}(t))\right) \dt \\
 \leq & \left\| \nabla \psi \right\|_\infty d_4 \int_0^T \left(\int_{\mathds{B}_R^c} \rho_{\infty}(t,x) \dx + R^{r-2} C(T,\tau_*,\rho_0 ) \right) \dt,
\end{align*}
 where we used  in the last step Lebesgue's convergence theorem, since the integrand converges pointwise (due to the narrow convergence) and since the second moments are bounded uniform, hence also the integrand is bounded. Moreover, the integrand in the last inequality converges pointwise to zero, as $R \rightarrow \infty$, is bounded by a constant, and hence by again invoking Lebesgue's theorem we have
	\begin{equation*}
	\lim_\ntoinf  \iiint\limits_{\left[0,T\right]\times ( \mathds{B}_R \times \mathds{B}_R )^c   } \nabla_x W_{{\omega_n} t} (x,y) \nabla \psi(t,x) \rho_{\omega_n}  (t,x)  \rho_{\omega_n} (t,y) \mathrm{d} x \mathrm{d} y\mathrm{d} t  = 0.
	\end{equation*}
			The limit of the second integral in \eqref{HFWF} follows from the $L^m(0,T;L^m(\Omega))$-convergence of $\rho_{\omega_n}$ and we get
				\begin{equation*}
					\lim_{\ntoinf} \int_0^{T}\int_{\Rd}  \Delta \psi(t,x)  \rho_{\omega_n} (t,x)^m \dx \dt = \int_0^{T}\int_{\Rd}  \Delta \psi(t,x)  \rho_\infty (t,x)^m \dx \dt.
				\end{equation*}	
By the narrow convergence of $\rho_{\omega_n}$ one can show that the limit of the first integral of the left-hand-side of \eqref{HFWF} is equal to
	\begin{equation*}
		\lim_{\ntoinf}\int_0^T \int_\Rd \rho_{\omega_n}  (t,x) \partial_t \psi(t,x) \dx \dt = \int_0^T \int_\Rd \rho_\infty  (t,x) \partial_t \psi(t,x) \dx \dt.
	\end{equation*}
Summarized, we obtain that $\rho_\infty$ solves
	\begin{align*}
	 			 &  \iiint\limits_{\left[0,T\right]\times\Rd\times \Rd} \nabla_x \overline{W} (x,y) \nabla \psi(t,x) \rho_\infty  (t,x)  \rho_\infty   (t,y)  \mathrm{d} x \mathrm{d} y\mathrm{d} t -\int_0^{T}\int_{\Rd}  \Delta \psi(t,x)  \rho_\infty   (t,x)^m \dx \dt \\
			= & \int_0^T \int_\Rd \rho_\infty    (t,x) \partial_t \psi(t,x) \dx \dt + \int_\Rd \rho_0(x) \psi(0,x)\dx,
	\end{align*}
	yielding that $\rho_\infty$ is a weak solution of the averaged Fokker-Planck equation \eqref{limFPeq}.
\end{proof}


\appendix
\section{The Moreau-Yosida approximation}\label{app:moryos}

In this part we complete the technical gaps in the proof of the existence of curves of steepest descents concerning the Moreau-Yosida approximation and the resolvent. The first result is an auxiliary inequality which will be used several times to derive lower bounds for the Moreau-Yosida approximation or upper bounds for the squared distance.


\begin{lemma}\label{BoundsMY}
Let $c_*,\tau_*$ be the constants from Assumptions \ref{DefRegE} and \ref{DefRegP}. Then, for all $\tau \in \left( 0, \tau_* \right), \ t \in \left[0,\infty\right)$ and all $u,v\in \X$, we have that:
	\begin{align}
			\phi(\tau,t,u) &\geq c_* - \frac{1}{\tau_*- \tau} \bd^2(u_*,u) , \label{LBoundMY} \\
			\bd^2(v,u) & \leq \frac{4\tau \tau_*}{\tau_* - \tau} \left( \Phi(\tau,t,u;v) - c_*  + \frac{1}{\tau_* - \tau} \bd^2(u_*,u) \right). \label{UBoundMY}
	\end{align}
\end{lemma}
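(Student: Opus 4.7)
My plan is to establish the second inequality \eqref{UBoundMY} first and to deduce \eqref{LBoundMY} as an immediate corollary. By construction, $\Phi(\tau,t,u;v) = \frac{1}{2\tau}\bd^2(u,v) + \mathcal{E}(v) + \mathcal{P}_t(v)$, and the combined version of $c_*$ provided by the remark after the assumptions gives, for every $t \in [0,\infty)$ and every $v \in \X$,
$$\mathcal{E}(v) + \mathcal{P}_t(v) \geq c_* - \frac{1}{2\tau_*}\bd^2(u_*,v).$$
Thus
$$\Phi(\tau,t,u;v) \geq \frac{1}{2\tau}\bd^2(u,v) - \frac{1}{2\tau_*}\bd^2(u_*,v) + c_*,$$
and everything will reduce to controlling the negative term $\bd^2(u_*,v)$ in terms of $\bd^2(u,v)$ and $\bd^2(u_*,u)$.

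For that I will use the triangle inequality together with Young's inequality in the standard form: for every $\alpha > 0$,
$$\bd^2(u_*,v) \leq (1+\alpha)\bd^2(u,v) + (1+1/\alpha)\bd^2(u_*,u).$$
Substituting and rearranging yields
$$\Phi(\tau,t,u;v) - c_* + \frac{1+1/\alpha}{2\tau_*}\bd^2(u_*,u) \geq \left(\frac{1}{2\tau} - \frac{1+\alpha}{2\tau_*}\right)\bd^2(u,v).$$
The free parameter $\alpha$ is now chosen to match exactly the constants appearing in the statement: taking $\alpha = (\tau_*-\tau)/(\tau_*+\tau) > 0$ (admissible since $\tau < \tau_*$) gives $(1+1/\alpha)/(2\tau_*) = 1/(\tau_*-\tau)$ and collapses the coefficient of $\bd^2(u,v)$ to $(\tau_*-\tau)/(2\tau(\tau_*+\tau))$. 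Isolating $\bd^2(u,v)$ and bounding $\tau_*+\tau < 2\tau_*$ then produces \eqref{UBoundMY}.

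The first inequality \eqref{LBoundMY} follows at once: the left-hand side of \eqref{UBoundMY} is nonnegative and the prefactor $4\tau\tau_*/(\tau_*-\tau)$ is positive, so the parenthesized quantity on the right must be nonnegative, i.e.\ $\Phi(\tau,t,u;v) \geq c_* - \frac{1}{\tau_*-\tau}\bd^2(u_*,u)$ for every $v \in \X$; passing to the infimum over $v$ yields \eqref{LBoundMY}.

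There is no genuine obstacle here; the lemma is essentially a bookkeeping exercise built on Young's inequality. The only real choice is picking $\alpha$ so that the constants in the statement come out exactly; other natural choices (for instance $\alpha = 1$) succeed only in the restricted range $\tau < \tau_*/2$, which is why the specific $\alpha$ above is needed to cover the full admissible range $\tau \in (0,\tau_*)$ claimed in the lemma.
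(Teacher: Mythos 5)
Your proof is correct and follows essentially the same route as the paper: both rest on the combined coercivity bound $\mathcal{E}(v)+\mathcal{P}_t(v)\ge c_*-\frac{1}{2\tau_*}\bd^2(u_*,v)$, the Young-type inequality with the same parameter $\alpha=\varepsilon=(\tau_*-\tau)/(\tau_*+\tau)$, and the final estimate $\tau_*+\tau\le 2\tau_*$, differing only in the order in which the two inequalities are read off.
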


\begin{proof}
We use the Cauchy-type inequality 
	\begin{equation*}
		(a+b)^2 \leq \left(1+ \varepsilon \right) a^2 + \left(1+ \frac{1}{\varepsilon}\right) b^2 \qquad \forall \ a,b \geq 0 , \ \varepsilon >0,
	\end{equation*}
	with $a= d(v,u), \ b= d(u_*,u)$ and $\varepsilon = \frac{\tau_* - \tau}{\tau_* + \tau}$ , to get:
		\begin{equation*}
			\frac{1}{2\tau_*} \bd^2(u_*,v) \leq  \frac{1}{\tau_* + \tau} \bd^2(v,u) + \frac{1}{\tau_* - \tau} \bd^2(u_*,u).
		\end{equation*}
	This yields for every $u,v \in \X$ and $\tau < \tau_*$:
\begin{align*}
				\Phi(\tau,t,u; v) & = \frac{\tau_* - \tau }{2\tau(\tau_* + \tau)} \bd^2(u,v) + \frac{1}{\tau_* +\tau} \bd^2(u,v) + \mathcal{E}(v) + \mathcal{P}_t(v) \\
				& \geq \frac{\tau_* -\tau}{4\tau_* \tau} \bd^2(u,v) + \frac{1}{2\tau_*} \bd^2(u_*,v)  - \frac{1}{\tau_* - \tau} \bd^2(u_*,u) + \mathcal{E}(v) + \mathcal{P}_t(v)\\
				& \geq \frac{\tau_* -\tau}{4\tau_* \tau} \bd^2(u,v)   - \frac{1}{\tau_* - \tau} \bd^2(u_*,u) + c_*,
			\end{align*}
from which \eqref{UBoundMY} and, after taking the infimum with respect to $v\in \X$, \eqref{LBoundMY} follows.
\end{proof}


	The next lemma gives an a priori estimate for the distance of a minimizer at time $s$ and with regularization parameter $\sigma$ in terms of a minimizer at time $t$ and with regularization parameter $\tau$.
	
		\begin{lemma} \label{EstimateMinimizers}
		Let $u \in \X$ and define $u_{\sigma,s} \in J_{\sigma,s}(u)$ and $u_{\tau,t} \in J_{\tau,t}(u)$ with $\sigma < \tau$ and $s< t$, then it holds that
			\begin{equation*}
				\bd^2(u,u_{\sigma,s}) \leq \bd^2(u,u_{\tau,t}) + \frac{2\tau\sigma}{\tau-\sigma} \int_s^t \alpha(r) \dr \left( 2 +\bd^2( u_*,u_{\sigma,s}) + \bd^2( u_*,u_{\tau,t}) \right).
			\end{equation*}
		\end{lemma}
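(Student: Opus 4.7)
The plan is to exploit the two competing minimality inequalities for $u_{\sigma,s}$ and $u_{\tau,t}$ and then absorb the time-shift error through the regularity condition (P3).

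First, I would write down the two inequalities arising from the fact that $u_{\sigma,s}$ minimizes $\Phi(\sigma,s,u;\cdot)$ and $u_{\tau,t}$ minimizes $\Phi(\tau,t,u;\cdot)$, obtained by testing each minimizer against the other. Adding the two resulting inequalities, the $\mathcal{E}$-terms cancel, and after rearrangement one is left with
\begin{equation*}
\left(\frac{1}{2\sigma}-\frac{1}{2\tau}\right)\bigl[\bd^2(u,u_{\sigma,s})-\bd^2(u,u_{\tau,t})\bigr] \leq \bigl(\mathcal{P}_t(u_{\sigma,s})-\mathcal{P}_s(u_{\sigma,s})\bigr)+\bigl(\mathcal{P}_s(u_{\tau,t})-\mathcal{P}_t(u_{\tau,t})\bigr).
\end{equation*}
The algebraic prefactor simplifies to $\frac{\tau-\sigma}{2\sigma\tau}$, which is strictly positive by the assumption $\sigma<\tau$; so I may divide through later without changing the direction of the inequality.

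Next, I would bound the right-hand side by applying assumption (P3) from Assumption \ref{DefRegP} to each difference in $\mathcal{P}$. This gives
\begin{equation*}
|\mathcal{P}_t(u_{\sigma,s})-\mathcal{P}_s(u_{\sigma,s})|+|\mathcal{P}_t(u_{\tau,t})-\mathcal{P}_s(u_{\tau,t})| \leq \int_s^t\alpha(r)\,dr\bigl(2+\bd^2(u_*,u_{\sigma,s})+\bd^2(u_*,u_{\tau,t})\bigr).
\end{equation*}
Combining this with the previous inequality and multiplying both sides by $\frac{2\sigma\tau}{\tau-\sigma}$ yields exactly the claimed bound.

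There is no real obstacle here, since the argument is a textbook two-minimizer comparison; the only care needed is checking that the prefactor $\frac{1}{2\sigma}-\frac{1}{2\tau}$ has the correct sign (it does, because $\sigma<\tau$) and that the absolute-value bound from (P3) may be applied to both differences without loss. Because the integrand $\alpha$ is non-negative and $s\leq t$, the integral $\int_s^t\alpha(r)\,dr$ is well-defined and non-negative, so no sign issues arise when recombining the terms.
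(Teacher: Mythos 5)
Your proof is correct and follows essentially the same route as the paper: the paper's chained inequality $\Phi(\sigma,s,u;u_{\sigma,s})\le\dots\le\Phi(\tau,t,u;u_{\sigma,s})+\mathcal{P}_s(u_{\tau,t})-\mathcal{P}_t(u_{\tau,t})$ is exactly your "add the two minimality inequalities" step in disguise, after which both arguments apply (P3) and multiply by $\frac{2\sigma\tau}{\tau-\sigma}$. No gaps.
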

		
\begin{proof}
Fix $u_{\sigma,s} \in J_{\sigma,s}(u)$ and $u_{\tau,t} \in J_{\tau,t}(u)$ with $\sigma < \tau, s <t$ and exploit once again the variational definition of the resolvent and of the Moreau-Yosida approximation to get 
			\begin{align*} 
			\Phi( \sigma, s,u; u_{\sigma,s}) & \leq \Phi( \sigma, s,u; u_{\tau,t}) \\
			& = \left( \frac{1}{2\sigma} - \frac{1}{2\tau} \right) \bd^2(u,u_{\tau,t}) + \Phi(\tau,s,u;u_{\tau,t}) \\
			& = \left( \frac{1}{2\sigma} - \frac{1}{2\tau} \right) \bd^2(u,u_{\tau,t}) + \Phi(\tau,t,u;u_{\tau,t}) + \mathcal{P}_s(u_{\tau,t}) - \mathcal{P}_t(u_{\tau,t}) \\
			& \leq \left( \frac{1}{2\sigma} - \frac{1}{2\tau} \right) \bd^2(u,u_{\tau,t}) + \Phi(\tau,t,u;u_{\sigma,s}) + \mathcal{P}_s(u_{\tau,t}) - \mathcal{P}_t(u_{\tau,t}).
			\end{align*}
			Subtract $\Phi(\tau,t,u;u_{\sigma,s})$ from both sides to obtain
			\begin{equation*}
			\left( \frac{1}{2\sigma} - \frac{1}{2\tau} \right) \bd^2(u,u_{\sigma,s}) + \mathcal{P}_s(u_{\sigma,s}) - \mathcal{P}_t(u_{\sigma,s})	\leq \left( \frac{1}{2\sigma} - \frac{1}{2\tau} \right) \bd^2(u,u_{\tau,t}) + \mathcal{P}_s(u_{\tau,t}) - \mathcal{P}_t(u_{\tau,t}).
			\end{equation*}
			Since $\sigma < \tau $, we can multiply with $\frac{2\tau\sigma}{\tau-\sigma}$ to get
				\begin{align*}
					\bd^2(u,u_{\sigma,s})  & \leq \bd^2(u,u_{\tau,t}) + \frac{2\tau\sigma}{\tau-\sigma} \left( \mathcal{P}_s(u_{\tau,t}) - \mathcal{P}_t(u_{\tau,t}) + \mathcal{P}_t(u_{\sigma,s}) - \mathcal{P}_s(u_{\sigma,s}) \right)\\
					& \leq  \bd^2(u,u_{\tau,t}) + \frac{2\tau\sigma}{\tau-\sigma} \int_s^t \alpha(r) \dr \left( 2 +\bd^2( u_*,u_{\sigma,s}) + \bd^2( u_*,u_{\tau,t}) \right) ,
				\end{align*}
			where in the last step, we used the absolute continuity of $\mathcal{P}_t$. 
			\end{proof}

	The resolvent possesses a continuity property, i.e., if we send the regularization parameter $\tau$ to zero, then every minimizer of $\Phi(\tau,t,u,\cdot)$ converges to $u$. This continuity property even holds true, if we take the limit with respect to time $t$ and to space $u$.

	\begin{lemma}[Continuity property of the resolvent]\label{ContofResolvent}
	Let  $u \in \mathcal{D}\left( \mathcal{E} \right)$ and  $\tau \in \left(0,\tau_* \right)$.	Given the convergent sequences $\tau_n \searrow 0,  \ t_n \rightarrow t$ and $u_n \dar u$, define a sequence of minimizers $v_n \in J_{\tau_n,t_n}(u_n)$. If in addition $\mathcal{E} (u_n) \leq C $,  then we have
		\begin{equation*}
				v_n \dar u \qquad \mathrm{as} \quad  n \rightarrow \infty.
		\end{equation*}	
	\end{lemma}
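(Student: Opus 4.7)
\medskip

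\noindent\emph{Plan of proof.} The strategy is to bound $\bd(v_n,u_n)$ by a quantity that vanishes as $\tau_n\searrow 0$, and then use the triangle inequality together with $u_n\dar u$ to conclude. First I would observe that since $v_n\in J_{\tau_n,t_n}(u_n)$ is a minimizer, we have by comparing to the competitor $u_n$ itself,
\begin{equation*}
\Phi(\tau_n,t_n,u_n;v_n)\le \Phi(\tau_n,t_n,u_n;u_n)=\mathcal{E}(u_n)+\mathcal{P}_{t_n}(u_n).
\end{equation*}
The right-hand side is uniformly bounded in $n$: the term $\mathcal{E}(u_n)$ is bounded by hypothesis, while $\mathcal{P}_{t_n}(u_n)$ is bounded because $u_n\dar u$ implies both $u_n\sar u$ (by compatibility of $\bs$ with $\bd$) and $\sup_{n,m}\bd(u_n,u_m)<\infty$, so that assumption (P1) yields $\mathcal{P}_{t_n}(u_n)\to \mathcal{P}_t(u)$.

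Next I would apply the a priori estimate \eqref{UBoundMY} from Lemma~\ref{BoundsMY} with the choice $\tau=\tau_n$, $t=t_n$, $u\leftarrow u_n$, $v=v_n$, which gives
\begin{equation*}
\bd^2(v_n,u_n)\le \frac{4\tau_n\tau_*}{\tau_*-\tau_n}\left(\Phi(\tau_n,t_n,u_n;v_n)-c_*+\frac{1}{\tau_*-\tau_n}\bd^2(u_*,u_n)\right).
\end{equation*}
Since $u_n\dar u$ also entails $\bd(u_*,u_n)\le \bd(u_*,u)+\bd(u,u_n)$ is uniformly bounded, and since $\tau_*-\tau_n\to \tau_*>0$ is bounded away from zero for $n$ large, the bracketed factor is uniformly bounded. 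The prefactor $\frac{4\tau_n\tau_*}{\tau_*-\tau_n}$ tends to $0$ as $\tau_n\searrow 0$, so $\bd(v_n,u_n)\to 0$, and the triangle inequality
\begin{equation*}
\bd(v_n,u)\le \bd(v_n,u_n)+\bd(u_n,u)\to 0
\end{equation*}
finishes the argument.

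I do not anticipate any real obstacle here; the proof is essentially a bookkeeping exercise built on the two Lemma~\ref{BoundsMY} inequalities. The only subtle step is verifying the uniform boundedness of $\mathcal{P}_{t_n}(u_n)$, which requires simultaneously using the compatibility of the two topologies and the continuity property (P1) of $\mathcal{P}_{(\cdot)}$ on $\bd$-bounded sets. Note in particular that this approach does not require extracting a $\bs$-convergent subsequence of $(v_n)_n$ nor any lower semicontinuity argument: the convergence $v_n\dar u$ drops out directly from the quantitative bound on $\bd(v_n,u_n)$.
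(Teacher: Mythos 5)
Your proposal is correct and follows essentially the same route as the paper: both use the monotonicity bound $\Phi(\tau_n,t_n,u_n;v_n)\le\mathcal{E}(u_n)+\mathcal{P}_{t_n}(u_n)$, the estimate \eqref{UBoundMY} from Lemma~\ref{BoundsMY}, the uniform bounds on $\mathcal{E}(u_n)$, $\mathcal{P}_{t_n}(u_n)$ and $\bd(u_*,u_n)$, and the vanishing prefactor $\tfrac{4\tau_n\tau_*}{\tau_*-\tau_n}$ to conclude $\bd(v_n,u_n)\to 0$. Your explicit final triangle inequality is left implicit in the paper but is the intended conclusion.
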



\begin{proof}		
We can assume without loss of generality that $\tau_n < \tau_*$. Use the monotonicity of the Moreau-Yosida approximation \eqref{MYmonotonicity} and the estimate \eqref{UBoundMY} with $v=v_n$ and $u=u_n$, to obtain:
\begin{align*}
\bd^2(v_n,u_n) & \leq \frac{4\tau_n \tau_*}{\tau_* - \tau_n} \left( \Phi(\tau_n,t_n,u_n;v_n) - c_*  + \frac{1}{\tau_* - \tau_n} \bd^2(u_*,u_n) \right)\\
& \leq \frac{4\tau_n \tau_*}{\tau_* - \tau_n} \left( \mathcal{E}(u_n) + \mathcal{P}_{t_n}(u_n) - c_*  + \frac{1}{\tau_* - \tau_n} \bd^2(u_*,u_n) \right).
\end{align*}
By $\bs$-continuity $\mathcal{P}_{t_n}(u_n)$ is bounded, and by assumption, also $\mathcal{E}(u_n)$ is bounded from above, so we can further estimate to obtain
	\begin{align*}
	 \bd^2(v_n,u_n) & \leq  \frac{4\tau_n \tau_*}{\tau_* - \tau_n} \left( C - c_*  + \frac{1}{\tau_* - \tau_n} \bd^2(u_*,u_n) \right)  .
	\end{align*}
	 Taking the limit $\ntoinf$ yields the desired convergence $v_n \dar u$.
	\end{proof}

The next lemma shows that the Moreau-Yosida approximation is also continuous, i.e., the minimal value of the Moreau-Yosida functional depends continuously on the regularization parameter $\tau$, the time coordinate $t$ and the spatial coordinate $u$.

\begin{lemma}[Continuity property of the Moreau-Yosida approximation] \label{MoreauCont}
The map $	\left(\tau,t,u\right) \mapsto \phi(\tau,t,u)$	is $\bd$-continuous on $\left(0, \tau_* \right) \times \left[0,\infty \right) \times \mathcal{D} \left( \mathcal{E} \right)$. 
	\end{lemma}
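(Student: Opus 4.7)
The plan is to prove continuity by combining a $\limsup$ estimate with a $\liminf$ estimate for any sequences $\tau_n \to \tau \in (0,\tau_*)$, $t_n \to t \in [0,\infty)$, and $u_n \dar u \in \mathcal{D}(\mathcal{E})$. Since convergence in $\bd$ implies $\bs$-convergence by the compatibility assumption, the regularity hypotheses on $\mathcal{E}$ and $\mathcal{P}_t$ can be applied throughout.

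For the $\limsup$ bound I would select a minimizer $\bar v \in J_{\tau,t}(u)$, which exists by Theorem \ref{ExistenceMMS}, and use it as a test point in the infimum defining $\phi(\tau_n,t_n,u_n)$:
\begin{equation*}
\phi(\tau_n,t_n,u_n) \leq \Phi(\tau_n,t_n,u_n;\bar v) = \frac{1}{2\tau_n}\bd^2(u_n,\bar v) + \mathcal{E}(\bar v) + \mathcal{P}_{t_n}(\bar v).
\end{equation*}
Passing to the limit, $\tau_n \to \tau$, continuity of the metric gives $\bd^2(u_n,\bar v) \to \bd^2(u,\bar v)$, and (P1) applied to the constant sequence $\bar v$ yields $\mathcal{P}_{t_n}(\bar v) \to \mathcal{P}_t(\bar v)$. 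Hence $\limsup_n \phi(\tau_n,t_n,u_n) \leq \Phi(\tau,t,u;\bar v) = \phi(\tau,t,u)$.

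For the $\liminf$ bound I would fix minimizers $v_n \in J_{\tau_n,t_n}(u_n)$, extract a $\bs$-convergent subsequence, and apply lower semicontinuity. To do so I first need uniform estimates. Testing the infimum with $u \in \mathcal{D}(\mathcal{E})$ gives $\phi(\tau_n,t_n,u_n) \leq \frac{1}{2\tau_n}\bd^2(u_n,u) + \mathcal{E}(u) + \mathcal{P}_{t_n}(u)$, which is bounded uniformly in $n$ (since $\tau_n \to \tau > 0$, $\bd(u_n,u) \to 0$, and $\mathcal{P}_{t_n}(u) \to \mathcal{P}_t(u)$). Then estimate \eqref{UBoundMY} of Lemma \ref{BoundsMY} shows that $\bd^2(v_n,u_n)$ is bounded, and since $u_n$ is $\bd$-convergent, $(v_n)$ is $\bd$-bounded. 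Combining the boundedness of $\Phi(\tau_n,t_n,u_n;v_n)$ with the lower bound on $\frac{1}{2\tau_*}\bd^2(u_*,v_n) + \mathcal{P}_{t_n}(v_n)$ from (P2) yields an upper bound on $\mathcal{E}(v_n)$. By (E3) a non-relabeled subsequence satisfies $v_n \sar \bar v$ for some $\bar v \in \X$. Invoking compatibility of $\bs$ with $\bd$, lower semi-$\bs$-continuity of $\mathcal{E}$ (E1), and $\bs$-continuity of $\mathcal{P}_t$ (P1), one obtains
\begin{equation*}
\liminf_{n\to\infty} \phi(\tau_n,t_n,u_n) = \liminf_{n\to\infty} \Phi(\tau_n,t_n,u_n;v_n) \geq \frac{1}{2\tau}\bd^2(u,\bar v) + \mathcal{E}(\bar v) + \mathcal{P}_t(\bar v) \geq \phi(\tau,t,u).
\end{equation*}
Since this applies to every convergent subsequence, the full $\liminf$ inequality follows and combining with the $\limsup$ estimate concludes the proof.

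The main technical obstacle is verifying uniform $\bd$-boundedness of the approximating minimizers $(v_n)$ together with a uniform bound on $\mathcal{E}(v_n)$, as this is what activates the compactness assumption (E3). Both bounds hinge on the quantitative estimate \eqref{UBoundMY} and on the coercivity built into (P2); the hypothesis $\tau < \tau_*$ is essential because \eqref{UBoundMY} degenerates as $\tau \nearrow \tau_*$, which is why continuity is only asserted on $(0,\tau_*) \times [0,\infty) \times \mathcal{D}(\mathcal{E})$.
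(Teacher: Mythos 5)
Your proof is correct. The $\limsup$ half is essentially the paper's argument (the paper tests the infimum with an \emph{arbitrary} $v\in\X$ and only afterwards takes the infimum over $v$, which avoids invoking Theorem \ref{ExistenceMMS} for the existence of $\bar v$, but this is cosmetic). The $\liminf$ half, however, takes a genuinely different route. You re-run the direct method: after the uniform $\bd$-bound on $v_n\in J_{\tau_n,t_n}(u_n)$ from \eqref{UBoundMY} and the resulting bound on $\mathcal{E}(v_n)$, you extract a $\bs$-convergent subsequence via (E3) and conclude with (E1), (P1) and the $\bs$-lower semicontinuity of $\bd$ from the compatibility assumption. The paper instead never passes to a limit of $(v_n)$: it uses the reverse triangle inequality to write $\bd^2(u_n,v_n)\ge \bd^2(u,v_n)+\bd^2(u_n,u)-2\bd(u_n,u)\,\bd(u,v_n)$, notes that the cross term vanishes because $\bd(u_n,u)\to0$ while $\bd(u,v_n)$ stays bounded, trades $\mathcal{P}_{t_n}(v_n)$ for $\mathcal{P}_t(v_n)$ using the time-regularity of $\mathcal{P}$ together with the same $\bd$-bound, and then observes that $\tfrac{1}{2\tau}\bd^2(u,v_n)+\mathcal{E}(v_n)+\mathcal{P}_t(v_n)\ge\phi(\tau,t,u)$ holds for \emph{each} $n$ by the very definition of the infimum. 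Both arguments are valid and both hinge on the quantitative estimate \eqref{UBoundMY}, which you correctly identify as the crux; the trade-off is that your version needs the compactness hypothesis (E3) and the subsequence bookkeeping, whereas the paper's swap-the-base-point trick is lighter and would go through even in the absence of any compactness assumption.
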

	
	
	\begin{proof}
	Choose a sequence $\left(\tau_n,t_n,u_n\right)_\ninN$ in $ \left(0,\tau_* \right) \times \left[0,\infty \right) \times \mathcal{D} \left( \mathcal{E} \right) $ with $\tau_n \rightarrow \tau \in \left(0,\tau_* \right), t_n \rightarrow t \in \left[0,\infty \right)$ and $u_n \dar u \in \mathcal{D} \left( \mathcal{E} \right) $. Then, it follows for an arbitrary $v\in \X$ that
	\begin{align*}
	\limsup_{\ntoinf} \phi(\tau_n,t_n,u_n) & \leq \limsup_{\ntoinf} \Phi(\tau_n,t_n,u_n;v)  = \limsup_{\ntoinf} \frac{1}{2\tau_n} \bd^2(u_n,v) + \mathcal{E}(v) + \mathcal{P}_{t_n}(v) = \frac{1}{2\tau} \bd^2(u,v) + \mathcal{E}(v) + \mathcal{P}_{t}(v). 
	\end{align*}
	Taking the infimum over $v$ on the r.h.s. yields the upper semi-$\bd$-continuity of $\phi$. 
	
	To prove the lower semi-continuity, choose $v_n \in J_{\tau_n,t_n}(u_n)$ and first of all notice that this sequence is bounded, since by the the upper estimate for the Moreau-Yosida approximation \eqref{UBoundMY}
	\begin{align*}
		\bd^2(v_n,u_n) & \leq \frac{4\tau_n \tau_*}{\tau_* - \tau_n} \left( \Phi(\tau_n,t_n,u_n;v_n) - c_*  + \frac{1}{\tau_* - \tau_n} \bd^2(u_*,u_n) \right) \\
		& =  \frac{4\tau_n \tau_*}{\tau_* - \tau_n} \left( \phi(\tau_n,t_n,u_n) - c_*  + \frac{1}{\tau_* - \tau_n} \bd^2(u_*,u_n) \right).
	\end{align*}	
	 Since every term on the r.h.s. is bounded the sequence $\left(v_n \right)_\ninN$ is $\bd$-bounded and by the continuity of the resolvent (Lemma \ref{ContofResolvent}) we also have that $v_n$ converges to $u$ in $\bd$. Now, the continuity of $\mathcal{P}_t$ and the variational definition of $v_n$ yield the lower semi-$\bd$-continuity:
	\begin{align*}
	\liminf_{\ntoinf} \phi(\tau_n,t_n,u_n) & = \liminf_{\ntoinf} \left(\frac{1}{2\tau_n} \bd^2(u_n,v_n) + \mathcal{E}(v_n) + \mathcal{P}_{t_n}(v_n)	\right) \\
	& \geq \liminf_{\ntoinf} \left(\frac{1}{2\tau_n} \left( \bd^2(u_n,u) + \bd^2(u,v_n) - 2 d(u_n,u)d(u,v_n) \right) + \mathcal{E}(v_n) + \mathcal{P}_{t_n}(v_n)\right)	 \\
	& = \liminf_{\ntoinf} \left(\frac{1}{2\tau}  \bd^2(u,v_n) + \mathcal{E}(v_n) +\mathcal{P}_{t}(v_n)\right) \\
	& \geq \liminf_{\ntoinf} \phi(\tau,t,u) = \phi(\tau,t,u). \qedhere
	\end{align*}
	\end{proof}

	 The next lemma shows that we can extend the Moreau-Yosida approximation continuously at $\tau=0$ and that the value at the boundary is equal to the starting functional, thus the Moreau-Yosida approximation is an approximation for the original functional.
	
	\begin{lemma}[Approximation property of the Moreau-Yosida approximation] \label{MYApproxProp}
	For all $u \in  \mathcal{D} \left( \mathcal{E} \right)$ and all sequences $\left( \tau_n, t_n \right) \subset \left(0, \tau_* \right) \times \left[ 0 ,\infty \right)$ with $\tau_n \searrow 0$ and $t_n \rightarrow t$, we have that
			\begin{equation*}
			\lim_{\ntoinf } \phi( \tau_n, t_n , u) = \mathcal{E}(u) + \mathcal{P}_t(u). 
		\end{equation*}
	\end{lemma}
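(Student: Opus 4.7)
The plan is to establish two matching inequalities, namely
$\limsup_n \phi(\tau_n,t_n,u)\leq \mathcal{E}(u)+\mathcal{P}_t(u)$ and
$\liminf_n \phi(\tau_n,t_n,u)\geq \mathcal{E}(u)+\mathcal{P}_t(u)$, using the
already-proved tools about the Moreau-Yosida approximation and the resolvent. The upper bound is the soft half: I plug $v=u$ into the Moreau-Yosida functional to get, via the monotonicity \eqref{MYmonotonicity},
$\phi(\tau_n,t_n,u)\leq \Phi(\tau_n,t_n,u;u)=\mathcal{E}(u)+\mathcal{P}_{t_n}(u)$,
and then pass to the limit using $\bs$-continuity of $\mathcal{P}_t$ in (P1) applied to the constant sequence $u_m:=u$, which is trivially $\bd$-bounded and $\bs$-convergent to $u$; this yields $\mathcal{P}_{t_n}(u)\to \mathcal{P}_t(u)$ and hence the desired upper estimate.

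For the lower bound I would pick minimizers $v_n\in J_{\tau_n,t_n}(u)$, which exist by Theorem \ref{ExistenceMMS}. Since $u\in \mathcal{D}(\mathcal{E})$, so $\mathcal{E}(u)<\infty$ is trivially bounded along the constant sequence $u_n:=u\xrightarrow{\bd} u$, the continuity property of the resolvent (Lemma \ref{ContofResolvent}) directly yields $v_n\xrightarrow{\bd} u$. In particular $(v_n)$ is $\bd$-bounded (hence $\sup_{n,m}\bd(v_n,v_m)<\infty$) and $v_n\sar u$ by compatibility of $\bs$ with $\bd$. Dropping the non-negative term $\frac{1}{2\tau_n}\bd^2(u,v_n)$ from the definition of $\phi$,
\begin{equation*}
\phi(\tau_n,t_n,u)=\tfrac{1}{2\tau_n}\bd^2(u,v_n)+\mathcal{E}(v_n)+\mathcal{P}_{t_n}(v_n)\geq \mathcal{E}(v_n)+\mathcal{P}_{t_n}(v_n),
\end{equation*}
and then (E1) gives $\liminf_n \mathcal{E}(v_n)\geq \mathcal{E}(u)$ while (P1) (applied with $v_n\sar u$, $t_n\to t$) gives $\mathcal{P}_{t_n}(v_n)\to \mathcal{P}_t(u)$, producing the lower estimate.

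Combining the two inequalities finishes the proof. There is no genuine obstacle here: the whole content of the statement is exactly that the Moreau-Yosida approximation serves as a continuous extension of $\mathcal{E}+\mathcal{P}_t$ to $\tau=0$, and all three ingredients needed, i.e.\ the variational upper bound, the resolvent convergence $v_n\to u$, and the separate semi-continuity behaviour of $\mathcal{E}$ and $\mathcal{P}_t$ along $\bs$-convergent $\bd$-bounded sequences, have already been established in Theorem \ref{ExistenceMMS}, Lemma \ref{ContofResolvent}, and Assumptions \ref{DefRegE}--\ref{DefRegP}.
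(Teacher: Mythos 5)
Your proposal is correct and follows essentially the same route as the paper: the upper bound via the monotonicity $\phi(\tau_n,t_n,u)\le\mathcal{E}(u)+\mathcal{P}_{t_n}(u)$ together with (P1), and the lower bound by choosing minimizers $v_n\in J_{\tau_n,t_n}(u)$, invoking Lemma \ref{ContofResolvent} to get $v_n\dar u$, discarding the nonnegative distance term, and applying (E1) and (P1). No gaps.
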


	\begin{proof}	
Given $u \in \mathcal{D}\left(\mathcal{E}\right)$, choose an element $u_n \in J_{\tau_n,t_n}(u)$. Thus, by the continuity of the resolvent (Lemma \ref{ContofResolvent}), $u_n \dar u$ as $\ntoinf$. The lower semi-$\bs$-continuity of $\mathcal{E}$ and the $\bs$-continuity of $\mathcal{P}_t$ yield a lower bound for the limit, i.e.:
	\begin{align*}
	\liminf_{\ntoinf} \phi(\tau_n, t_n, u) & =  \liminf_{ \ntoinf} \frac{1}{2 \tau_n} \bd^2(u_n,u) + \mathcal{E}(u_n) +\mathcal{P}_{t_n}(u_n)  \geq \liminf_{ \ntoinf } \mathcal{E}(u_n) +\mathcal{P}_{t_n}(u_n)  \geq  \mathcal{E}(u) +\mathcal{P}_t(u).
	\end{align*}
	The reverse inequality follows from the monotonicity of the Moreau-Yosida approximation \eqref{MYmonotonicity}, and with the $\bs$-continuity of $\mathcal{P}_t$:
	\begin{equation*}
	\limsup_{\ntoinf} \phi(\tau_n, t_n, u) \leq \limsup_{\ntoinf}  \mathcal{E}(u) +\mathcal{P}_{t_n}(u)  = \mathcal{E}(u) +\mathcal{P}_{t}(u). \qedhere
	\end{equation*}
	\end{proof}

	Next, we prove a certain differentiability property of the \emph{Moreau-Yosida approximation}, which will be important later to derive a discrete energy inequality for the discrete solutions of the Minimizing Movement scheme.  


\begin{lemma}[Joint differentiability of the Moreau-Yosida approximation] \label{DiffofPhi}
For every $u \in \mathcal{D}\left(\mathcal{E} \right)$ and $ t \in \left[0,\infty \right) $, the map $\tau \rightarrow \phi(\tau,t + \tau,u)$ is locally Lipschitz continuous on $\left(0,\tau_*\right)$ and differentiable except on a countable set $\mathcal{S}_{t,u}$. For every $\tau \in \left( 0 , \tau_* \right) \backslash \mathcal{S}_{t,u} $ we have:
		\begin{align}
			\frac{d}{d\tau} \phi(\tau,t+\tau,u)  =  -\frac{1}{2\tau^2} \bd^2(u,v) + \partial_t \mathcal{P}_{t+\tau}(v) \qquad \forall \ v\in J_{\tau,t+\tau}(u).  \label{DifferentialofPhi3}
		\end{align}
\end{lemma}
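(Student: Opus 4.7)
I plan to treat $g(\tau):=\phi(\tau,t+\tau,u)$ as the infimum of the parametric family $v\mapsto\Phi(\tau,t+\tau,u;v)$ and combine an envelope-type argument at points of differentiability with the a priori bounds from Lemma \ref{BoundsMY} and the absolute-continuity assumption (P3) on $\mathcal{P}_t$. The three tasks are: local regularity of $g$ in $\tau$, differentiability outside a countable set, and identification of $g'$ via the envelope principle.

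First, I would establish local (absolute) Lipschitz continuity. Fix a compact $[a,b]\subset(0,\tau_*)$ and $\tau_1,\tau_2\in[a,b]$, and let $v_i\in J_{\tau_i,t+\tau_i}(u)$. Using $v_1$ as a competitor in $\phi(\tau_2,t+\tau_2,u)$ gives
\begin{equation*}
g(\tau_2)-g(\tau_1)\leq\left(\tfrac{1}{2\tau_2}-\tfrac{1}{2\tau_1}\right)\bd^2(u,v_1)+\mathcal{P}_{t+\tau_2}(v_1)-\mathcal{P}_{t+\tau_1}(v_1).
\end{equation*}
By \eqref{UBoundMY} of Lemma \ref{BoundsMY}, both $\bd^2(u,v_1)$ and $\bd^2(u_*,v_1)$ are uniformly bounded for $\tau_1\in[a,b]$ (using also the continuity of $\phi$ from Lemma \ref{MoreauCont}), so (P3) bounds the $\mathcal{P}$-difference by $C\int_{t+\tau_1}^{t+\tau_2}\alpha(r)\,dr$, while the prefactor $\tfrac{1}{2\tau_2}-\tfrac{1}{2\tau_1}$ contributes a classical Lipschitz bound of order $|\tau_2-\tau_1|/a^2$. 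Swapping the roles of $\tau_1$ and $\tau_2$ yields the reverse inequality and establishes local absolute continuity (reducing to genuine Lipschitz continuity when $\alpha$ is essentially bounded).

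Second, I would verify differentiability outside an at-most-countable set $\mathcal{S}_{t,u}$ by exposing a concavity structure. Write
\begin{equation*}
\tau g(\tau)=\inf_{v\in\X}\left[\tfrac{1}{2}\bd^2(u,v)+\tau\bigl(\mathcal{E}(v)+\mathcal{P}_t(v)\bigr)+\tau\bigl(\mathcal{P}_{t+\tau}(v)-\mathcal{P}_t(v)\bigr)\right].
\end{equation*}
For fixed $v$, the first bracket is affine in $\tau$, so its infimum over $v$ is concave in $\tau$ and hence differentiable outside a countable set. The remainder $\tau(\mathcal{P}_{t+\tau}(v)-\mathcal{P}_t(v))$ is, thanks to (P3), uniformly absolutely continuous in $\tau$ over any bounded family of competitors and vanishes as $\tau\to 0$; combined with the countability of $\mathcal{N}_\alpha$ postulated in (P3), this perturbation only enlarges the exceptional set by another countable contribution. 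Dividing by the smooth nonvanishing factor $\tau$ then transfers the differentiability to $g$ itself.

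Finally, at any $\tau_0\in(0,\tau_*)\setminus\mathcal{S}_{t,u}$ and any $v\in J_{\tau_0,t+\tau_0}(u)$, I obtain the formula by envelope reasoning. The map $\tau\mapsto\Phi(\tau,t+\tau,u;v)-g(\tau)$ is nonnegative by the variational definition of $\phi$ and vanishes at $\tau_0$, so $\tau_0$ is a minimum of this nonnegative function. The competitor $\tau\mapsto\Phi(\tau,t+\tau,u;v)$ is $C^1$ at $\tau_0$ with derivative $-\tfrac{1}{2\tau_0^2}\bd^2(u,v)+\partial_t\mathcal{P}_{t+\tau_0}(v)$ (invoking (P4) to differentiate $\mathcal{P}_{t+\tau}(v)$ in $\tau$), and $g$ is differentiable at $\tau_0$ by the previous step; the two derivatives must therefore agree, giving exactly \eqref{DifferentialofPhi3}. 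Independence of $v\in J_{\tau_0,t+\tau_0}(u)$ is then automatic, since the left-hand side does not depend on $v$. The main technical hurdle is the second step: upgrading the a.e. differentiability which follows immediately from local absolute continuity to differentiability outside a \emph{countable} exceptional set. The clean route is to isolate a concave structure in $\tau g(\tau)$ after freezing the $\tau$-dependence inside $\mathcal{P}$, and to absorb the residual non-autonomous contribution into a further countable set using the hypothesis on $\mathcal{N}_\alpha$ from (P3).
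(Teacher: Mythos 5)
Your steps (1) and (3) coincide with the paper's argument. The paper also obtains local absolute continuity by exchanging the two minimizers $u_\tau\in J_{\tau,t+\tau}(u)$ and $u_\sigma\in J_{\sigma,t+\sigma}(u)$ as competitors, bounding the $\mathcal{P}$-increment via (P3) and the distances via \eqref{UBoundMY}; and it identifies the derivative by dividing the resulting two-sided inequality by $\tau-\sigma$ and taking one-sided limits at a point of differentiability, which is precisely your envelope argument written as a sandwich. Independence of the formula from the choice of $v$ in the resolvent is noted in the same way.

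The genuine issue is your step (2). Writing $\tau g(\tau)=\inf_v\bigl[\tfrac12\bd^2(u,v)+\tau(\mathcal{E}(v)+\mathcal{P}_t(v))+\tau(\mathcal{P}_{t+\tau}(v)-\mathcal{P}_t(v))\bigr]$ does not give ``concave plus a countably-bad perturbation'': an infimum over $v$ of (affine in $\tau$) plus (a $v$-dependent, non-affine remainder) cannot be split into the infimum of the affine parts plus a separately controllable term, since the remainder depends on $v$ and the minimizer moves with $\tau$. The autonomous countable-exceptional-set conclusion rests on the whole family $\tau\mapsto\Phi(\tau,t+\tau,u;v)$ being uniformly semiconcave in $\tau$ (in the autonomous case each member is affine after multiplying by $\tau$); here (P3)--(P4) provide only first-order control of $\tau\mapsto\mathcal{P}_{t+\tau}(v)$, with no uniform one-sided second-order bound, and the set $\mathcal{N}_\alpha$ of (P3) encodes a Lebesgue-point-type condition that is unrelated to this. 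So your argument does not establish the countability of $\mathcal{S}_{t,u}$. You should be aware, however, that the paper's own proof does not establish it either: it stops at local absolute continuity (your step (1)) and then computes the derivative only on the set of differentiability, which a priori is merely of full measure; and full-measure differentiability is all that is used downstream, when \eqref{DifferentialofPhi3} is integrated in $\sigma$ in the proof of Theorem \ref{DiscIntInequMMs}. If you drop the countability claim and state a.e.\ differentiability, your proposal is complete.
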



\begin{proof}
Fix $\sigma < \tau$ in $ \left(0, \tau_* \right)$ and choose $u_\tau \in J_{\tau,t+\tau}(u)$ and $u_\sigma \in J_{\sigma,t+\sigma}(u)$ and exploit the variational definition of the Moreau-Yosida approximation to obtain
		\begin{align*}
\phi(\tau,t+\tau,u)-\phi(\sigma,t+\sigma,u) & \leq \Phi(\tau,t+\tau, u, u_\sigma) - \Phi(\sigma,t+\sigma, u, u_\sigma) \\
&= \left( \frac{1}{2\tau} - \frac{1}{2\sigma} \right) \bd^2(u,u_\sigma ) + \mathcal{P}_{t+\tau} (u_\sigma) - \mathcal{P}_{t+\sigma} (u_\sigma) \\
& \leq \frac{\sigma - \tau}{2 \tau \sigma}  \bd^2(u,u_\sigma ) +   \int_{t+\sigma}^{t+\tau} \alpha(r) \dr ( 1+ \bd^2(u_* , u_\sigma) ).
\end{align*}
Analogously, a lower bound can be established by reversing the role of $u_\tau$ and $u_\sigma$:
\begin{align*}
\phi(\tau,t+\tau,u)-\phi(\sigma,t+\sigma,u) & \geq \Phi(\tau,t+\tau, u, u_\tau) - \Phi(\sigma,t+\sigma, u, u_\tau) \\
&= \left( \frac{1}{2\tau} - \frac{1}{2\sigma} \right) \bd^2(u,u_\tau ) + \mathcal{P}_{t+\tau} (u_\tau) - \mathcal{P}_{t+\sigma} (u_\tau) \\
& \geq \frac{\sigma - \tau}{2 \tau \sigma} \bd^2(u,u_\tau )  -   \int_{t+\sigma}^{t+\tau} \alpha(r) \dr ( 1+ \bd^2(u_* , u_\tau) ).
	\end{align*}
Note that by estimate \eqref{UBoundMY} and by the monotonicity of the Moreau-Yosida approximation we have 
\begin{align*}
\bd^2(u_{\tau}, u) & \leq \frac{4\tau \tau_*}{\tau_* - \tau} \left( \Phi(\tau,t+\tau,u,u_{\tau}) - c_*  + \frac{1}{\tau_* - \tau} \bd^2(u_*,u) \right) \\
& = \frac{4\tau \tau_*}{\tau_* - \tau} \left( \phi(\tau,t+\tau,u ) - c_*  + \frac{1}{\tau_* - \tau} \bd^2(u_*,u) \right) \\
& \leq  \frac{4\tau \tau_*}{\tau_* - \tau} \left( \mathcal{E}(u) + \mathcal{P}_{t+ \tau}(u)- c_*  + \frac{1}{\tau_* - \tau} \bd^2(u_*,u) \right) \\
& \leq  \frac{4\tau \tau_*}{\tau_* - \tau} \left( \mathcal{E}(u) + \sup_{\tau \in \left(0,\tau_* \right) } \mathcal{P}_{t+ \tau}(u)- c_*+ \frac{1}{\tau_* - \tau} \bd^2(u_*,u) \right).
\end{align*}
Thus, $\bd^2(u_{\tau}, u)$, $\bd^2(u_{\tau}, u_*)$, $\bd^2(u_{\sigma}, u)$, and $\bd^2(u_{\sigma }, u_*)$ are locally bounded by some constant independent of $\tau$ and $\sigma$ and therefore $\tau \mapsto \phi(\tau,t+\tau,u)$ is locally absolutely continuous. To calculate the derivative for a point $\tau \in \left(0,\tau_* \right) \backslash \mathcal{S}_{t,u}$ in the set of differentiability, divide the previous inequalities by $\tau - \sigma $ such that we have for $ \sigma < \tau$:
\begin{align*}
-\frac{1}{2\tau\sigma} \bd^2(u,u_\tau) + \frac{\mathcal{P}_{t+\tau} (u_\tau) - \mathcal{P}_{t+\sigma} (u_\tau)}{\tau-\sigma} & \leq \frac{\phi(\tau,t+\tau,u)-\phi(\sigma,t+\sigma,u)}{\tau-\sigma}.
\end{align*}
The left-sided limit $\sigma \nearrow \tau$ yields a lower bound for the derivative $\tau \mapsto \phi(\tau,t+\tau,u)$ and analogously we gain from the inequality for $\tau < \sigma$ and the right-sided limit $\sigma \searrow \tau$ an upper bound. Since $u_\tau$ was arbitrarily chosen in the resolvent, the value of the derivative is independent of the $u_\tau$ and therefore the desired formula is true. 
\end{proof}

		
		To conclude this section about the \emph{Moreau-Yosida approximation} and the \emph{resolvent} we state a known result about an a priori result for the local slope in terms the minimizer of the \emph{Moreau-Yosida approximation}, see for instance \cite{ags}.
		
		\begin{lemma}\label{EstimateLocalSlope}
			Given $u\in \X, \tau >0, t \in \left[0,\infty \right)$ and $v\in J_{\tau,t}(u)$, we have
				\begin{equation}
					| \partial (\mathcal{E} + \mathcal{P}_t) | (v) \leq \frac{1}{\tau} \bd(u,v).	\label{LocalSlopeEstimate}	
				\end{equation}
		\end{lemma}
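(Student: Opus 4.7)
The plan is to exploit the minimality of $v$ in $\Phi(\tau,t,u;\cdot)$ by comparing its value against an arbitrary perturbation $w$ approaching $v$, and then to ``linearize'' the quadratic distance term using the triangle inequality in order to recover exactly the local slope quotient.

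First I would start from the variational inequality
\begin{equation*}
\frac{1}{2\tau}\bd^2(u,v)+(\mathcal{E}+\mathcal{P}_t)(v)\le\frac{1}{2\tau}\bd^2(u,w)+(\mathcal{E}+\mathcal{P}_t)(w)\qquad\text{for all }w\in\X,
\end{equation*}
which is just the statement that $v\in J_{\tau,t}(u)$ minimizes the Moreau--Yosida functional. Rearranging and factoring the difference of squares yields
\begin{equation*}
(\mathcal{E}+\mathcal{P}_t)(v)-(\mathcal{E}+\mathcal{P}_t)(w)\le\frac{1}{2\tau}\bigl(\bd(u,w)-\bd(u,v)\bigr)\bigl(\bd(u,w)+\bd(u,v)\bigr).
\end{equation*}

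Next, I would use the triangle inequality $\bd(u,w)-\bd(u,v)\le\bd(v,w)$ to extract the factor $\bd(v,w)$ needed in the denominator of the local slope. Dividing both sides by $\bd(v,w)$ (for $w\ne v$) and taking the positive part, I would obtain
\begin{equation*}
\frac{\bigl((\mathcal{E}+\mathcal{P}_t)(v)-(\mathcal{E}+\mathcal{P}_t)(w)\bigr)^+}{\bd(v,w)}\le\frac{1}{2\tau}\bigl(\bd(u,w)+\bd(u,v)\bigr).
\end{equation*}
Finally, taking $\limsup_{w\to v}$ (in the $\bd$-topology) and using the continuity of $w\mapsto\bd(u,w)$, the right-hand side converges to $\frac{1}{\tau}\bd(u,v)$. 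By the definition of the local slope, this gives precisely the estimate $|\partial(\mathcal{E}+\mathcal{P}_t)|(v)\le\frac{1}{\tau}\bd(u,v)$.

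There is essentially no obstacle here: the argument is entirely formal and uses only the variational characterization of the resolvent together with the triangle inequality. The only mild subtlety is handling the positive part and the limit superior in the definition of the local slope, but both are straightforward once the estimate is divided by $\bd(v,w)$. Note in particular that no regularity of $\mathcal{E}$ or $\mathcal{P}_t$ (beyond what is needed for the minimizer $v$ to exist) is required for this bound.
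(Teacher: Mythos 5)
Your argument is correct and is precisely the classical proof of this estimate (cf.\ Lemma~3.1.3 in \cite{ags}), which the paper itself does not reproduce but only cites. The factorization of $\bd^2(u,w)-\bd^2(u,v)$, the triangle inequality $\bd(u,w)-\bd(u,v)\le\bd(v,w)$ (which gives an upper bound regardless of sign since the cofactor $\bd(u,w)+\bd(u,v)$ is nonnegative), and the passage to the $\limsup$ are all exactly as intended.
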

	
\section{The Minimizing Movement scheme}\label{app:minmov}

In this section, we derive the classical estimates of Theorem \ref{BoundsMMS} and the discrete energy inequality of Theorem \ref{DiscIntInequMMs}.


\noindent\emph{Proof of Theorem \ref{BoundsMMS}:}
	For a given partition $\mathcal{T}$ with $\btau \in \left(0, \tau_*\right) $ consider the discrete solution $\left(\uk\right)_\kinN$ obtained by the Minimizing Movement scheme. Since $\uk$ is a minimizer for $\Phi(\tau,t^k_\btau,\ukm; \cdot)$ it satisfies the discrete variational inequality:
			\begin{equation*}
				\frac{1}{2\tau_k} \dk + \ek + \pk \leq \frac{1}{2\tau_k} \bd^2(\ukm,\ukm) +\ekm + \pkm.
			\end{equation*}
		Rearrange and sum these inequalities from $k=1$ to $k=N$ and exploit the telescopic sum to obtain 
		\begin{align}
		\sum_{k=1}^N \frac{1}{2\tau_k } \dk & \leq \mathcal{E}(u_0) - \ekn + \sum_{k=1}^N \left( \pkm - \pk\right) \label{AuxClasEstMMs} \\
		& =  \mathcal{E}(u_0) - \ekn + \mathcal{P}_{0} (u_0) - \mathcal{P}_{t^N_\btau}( \ukn) +  \sum_{k=0}^{N-1}\left(  \mathcal{P}_{t^{k+1}_\btau} ( \uk) - \mathcal{P}_{t^k_\btau} (\uk)\right) \notag  \\
		&\leq \mathcal{E}(u_0) + \mathcal{P}_{0} (u_0) + \frac{1}{2\tau_*}\bd^2(u_*, \ukn)  - c_*  +   \sum_{k=0}^{N-1} \int_{t^k_\btau}^{t^{k+1}_\btau} \alpha (r) \dr \left( 1 + \bd^2(u_*,\uk) \right) \label{AuxIneqBoundsMMs}.
		\end{align}
Furthermore, using Young's inequality with $\varepsilon = \frac{\tau_*}{2}$, we get
					\begin{align*}
				\frac{1}{2}	\dksn - \frac{1}{2} \dksz & = \sum_{k=1}^N \frac{1}{2}\dks - \frac{1}{2} \dksm  \\
				& \leq \sum_{k=1}^N \bd(\uk,\ukm) \bd(\uk,u_*)   \\
				& \leq \frac{\tau_*}{2}  \sum_{k=1}^N \frac{1}{2\tau_k } \dk +  \frac{1}{\tau_*}\sum_{k=1}^N \tau_k \dks.
				\end{align*}
	Inserting the auxiliary inequality \eqref{AuxIneqBoundsMMs} from above yields
				\begin{align*}
				\frac{1}{2}	\dksn - \frac{1}{2} \dksz & \leq \frac{\tau_*}{2} \left( \mathcal{E}(u_0)  + \mathcal{P}_{0} (u_0) + \frac{1}{2\tau_*} \bd^2(u_*, \ukn)- c_* +   \sum_{k=0}^{N-1} \int_{t^k_\btau}^{t^{k+1}_\btau} \alpha (r) \dr \left( 1 + \bd^2(u_*,\uk) \right)  \right) \\
				& \quad + \frac{1}{\tau_*} \sum_{k=1}^N \tau_k \dks \\
				& \leq \frac{\tau_*}{2}  \left( \mathcal{E}(u_0)  + \mathcal{P}_{0} (u_0) - c_* + \int_0^T \alpha(r) \dr (1 + \dksz) \right) \\
				& \quad + \frac{1}{4} \dksn + \sum_{k=1}^N  \left( \frac{\tau_*}{2}  \int_{t^k_\btau}^{t^{k+1}_\btau} \alpha (r) \dr   +\frac{\tau_k}{\tau_*} \right) \dks.
			\end{align*}
		Rearrange the inequality to obtain
		\begin{align*}
		\dksn & \leq 2 \dksz + 2 \tau_*  \left( \mathcal{E}(u_0)  + \mathcal{P}_{0} (u_0) - c^* + \int_0^T \alpha(r) \dr (1 + \dksz) \right) \\
		& \quad + 4 \sum_{k=1}^N  \left( \frac{\tau_*}{2} \int_{t^k_\btau}^{t^{k+1}_\btau} \alpha (r) \dr   +\frac{ \tau_k}{\tau_*} \right) \dks \\
		& =: \tilde{C}(T,\tau_*, u_0) + 4 \sum_{k=1}^N \alpha_k \dks.
		\end{align*}
		Since by assumption $\sup_k 4 \alpha_k < 1$ one can apply the discrete version of Gronwall's lemma \cite[Lemma 3.2.4]{ags} to conclude 
		\begin{equation*}
				\dksn \leq \hat{C} (T,\tau_*, u_0)  \exp \left[ \hat{c} (T,\tau_*, u_0) \sum_{k=1}^{N-1} \alpha_k \right] \leq \hat{C} (T,\tau_*, u_0)  \exp \left[ \hat{c} (T,\tau_*, u_0) \left( \frac{\tau_*}{2} \int_0^T \alpha(r) \dr + \frac{T}{\tau_*}  \right)  \right].
		\end{equation*}
Hence, we have proven the $\bd$-boundedness of the discrete solution \eqref{bounddn}. With this result and with the first chain of inequalities we can deduce the first estimate \eqref{boundsumdk}, i.e.,
		\begin{align*}
		\sum_{k=1}^N \frac{1}{2\tau_k } \dk & \leq \mathcal{E}(u_0) + \mathcal{P}_{0} (u_0) + \frac{1}{2\tau_*}\bd^2(u_*, \ukn)  - c_*  +   \sum_{k=0}^{N-1} \int_{t^k_\btau}^{t^{k+1}_\btau} \alpha (r) \dr \left( 1 + \bd^2(u_*,\uk) \right) \\
		& \leq \mathcal{E}(u_0) + \mathcal{P}_{0} (u_0) + \frac{1}{2\tau_*}C(T,\tau_*, u_0)  - c_*  + \int_0^T \alpha(r) \dr \left( 1 + C(T,\tau_*, u_0)  \right).
		\end{align*}
Again, using this inequality yields the upper bound for $\ekn$, since
\begin{align*}
	\ekn & \leq \ekn + \sum_{k=1}^N \frac{1}{2\tau_k } \dk \\
	& \leq  \mathcal{E}(u_0) + \mathcal{P}_{0} (u_0) - \mathcal{P}_{t^N_\btau}( \ukn) +  \sum_{k=0}^{N-1}  \mathcal{P}_{t^{k+1}_\btau} ( \uk) - \mathcal{P}_{t^k_\btau} (\uk) \\
	& \leq  \mathcal{E}(u_0) + \mathcal{P}_{0} (u_0)  + \frac{1}{2\tau_*}\bd^2(u_*, \ukn)  - c_* +   \sum_{k=0}^{N-1} \int_{t^k_\btau}^{t^{k+1}_\btau} \alpha (r) \dr \left( 1 + \bd^2(u_*,\uk) \right) \\
	& \leq  \mathcal{E}(u_0) + \mathcal{P}_{0} (u_0) +  \frac{1}{2\tau_*} C(T,\tau_*, u_0) - c_* + \int_0^T \alpha(r) \dr \left( 1 + C(T,\tau_*,u_0) \right). \tag*{\qed}
\end{align*}

The derivation of the discrete energy inequalities is combination of the theory of the Moreau-Yosida approximation and of the definition of the De Giorgi interpolation.


\noindent\emph{Proof of Theorem \ref{DiscIntInequMMs}:}
From Lemma \ref{DiffofPhi} we know that the map $\sigma \mapsto \phi(\sigma, t +\sigma,u)$ is locally absolutely continuous and we can compute the derivative at almost all $\sigma$, which is given by
		\begin{equation*}
			\frac{d}{d\sigma} \phi(\sigma,t+\sigma,u)= -\frac{1}{2\sigma^2} \bd^2(u,v) + \partial_t \mathcal{P}_{t+\sigma}(v) \qquad \forall \ v\in J_{\sigma,t+\sigma}(u). 
		\end{equation*}
	Choose $t=t^k_\btau$, $u=\uk$ and use $ \tilde{u}_{\btau} (t^k_\btau +\sigma) \in J_{\sigma, t^k_\btau +\sigma }(\uk) $ when integrating the equation with respect to $\sigma$ from $\varepsilon >0$ to $\tau_k$:
	\begin{equation*}
	\phi(\tau, t^{k+1}_\btau, \uk) - \phi( \varepsilon, t^k_\btau + \varepsilon, \uk)= \int_\varepsilon^{\tau_k} \left[- \frac{1}{2\sigma^2} \bd^2(\uk,\tilde{u}_{\btau} (t^k_\btau +\sigma) ) + \partial_t \mathcal{P}_{t^k_\btau + \sigma}(\tilde{u}_{\btau} (t^k_\btau +\sigma) )\right] \ds . 
	\end{equation*}
	Use $\tilde{u}_{\btau} (t^{k+1}_\btau ) \in J_{\tau, t^{k+1}_\btau }(\uk)$ and Lemma \ref{MYApproxProp} to perform the limit $\varepsilon \searrow 0$ to obtain
	\begin{align*}
	& \frac{1}{2\tau_k} \bd^2(\ukp, \uk) + \mathcal{E}(\ukp) + \mathcal{P}_{t^{k+1}_\btau} (\ukp) - \mathcal{E}(\uk) - \mathcal{P}_{t^k_\btau} (\uk) \\
	= &  \int_0^{\tau_k}\left[ - \frac{1}{2\sigma^2} \bd^2(\uk,\tilde{u}_{\btau} (t^k_\btau +\sigma) ) + \partial_t \mathcal{P}_{t^k_\btau +\sigma}(\tilde{u}_{\btau} (t^k_\btau +\sigma) )\right] \ds . 
	\end{align*}
	Apply Lemma \ref{EstimateLocalSlope} with $t=t^k_\btau+\sigma, u=\uk$ to obtain
		\begin{align*}
  & \frac{1}{2\tau_k} \bd^2(\ukp, \uk) + \mathcal{E}(\ukp) + \mathcal{P}_{t^{k+1}_\btau} (\ukp) - \mathcal{E}(\uk) - \mathcal{P}_{t^k_\btau} (\uk) \\
		 \leq & - \frac{1}{2} \int_0^{\tau_k} \left[ |  \partial\left( \mathcal{E} + \mathcal{P}_{t^k_\btau+\sigma} \right) |^2 ( \tilde{u}_\btau (t^k_\btau+\sigma) ) +  \partial_t \mathcal{P}_{t^k_\btau+\sigma}(\tilde{u}_{\btau} (t^k_\btau +\sigma) )\right] \ds.
	\end{align*}
	Summation from $k=0$ to $N -1 $ yields the desired discrete energy inequality.
\qed



\bibliographystyle{abbrv}
   \bibliography{thlit}

\def\cprime{$'$}
\begin{thebibliography}{10}

\bibitem{ags}
L.~Ambrosio, N.~Gigli, and G.~Savar{\'e}.
\newblock {\em Gradient flows in metric spaces and in the space of probability
  measures}.
\newblock Lectures in Mathematics ETH Z\"urich. Birkh\"auser Verlag, Basel,
  second edition, 2008.

\bibitem{blanchet2012}
A.~Blanchet and P.~Lauren{\c{c}}ot.
\newblock The parabolic-parabolic {K}eller-{S}egel system with critical
  diffusion as a gradient flow in {$\mathbb{R}^d,\ d\ge3$}.
\newblock {\em Comm. Partial Differential Equations}, 38(4):658--686, 2013.

\bibitem{braides2002}
A.~Braides.
\newblock {\em {$\Gamma$}-convergence for beginners}, volume~22 of {\em Oxford
  Lecture Series in Mathematics and its Applications}.
\newblock Oxford University Press, Oxford, 2002.

\bibitem{cmv2003}
J.~A. Carrillo, R.~J. McCann, and C.~Villani.
\newblock Kinetic equilibration rates for granular media and related equations:
  entropy dissipation and mass transportation estimates.
\newblock {\em Rev. Mat. Iberoamericana}, 19(3):971--1018, 2003.

\bibitem{carrillo2006contractions}
J.~A. Carrillo, R.~J. McCann, and C.~Villani.
\newblock Contractions in the 2-{W}asserstein length space and thermalization
  of granular media.
\newblock {\em Arch. Ration. Mech. Anal.}, 179(2):217--263, 2006.

\bibitem{fg}
L.~C.~F. {Ferreira} and J.~C. {Valencia-Guevara}.
\newblock {Gradient flows of time-dependent functionals in metric spaces and
  applications for PDEs}.
\newblock {\em ArXiv e-prints}, Sept. 2015.

\bibitem{gianazza2009}
U.~Gianazza, G.~Savar{\'e}, and G.~Toscani.
\newblock The {W}asserstein gradient flow of the {F}isher information and the
  quantum drift-diffusion equation.
\newblock {\em Arch. Ration. Mech. Anal.}, 194(1):133--220, 2009.

\bibitem{jko}
R.~Jordan, D.~Kinderlehrer, and F.~Otto.
\newblock The variational formulation of the {F}okker-{P}lanck equation.
\newblock {\em SIAM J. Math. Anal.}, 29(1):1--17, 1998.

\bibitem{kim2010}
C.~Kim, P.~Talkner, E.~K. Lee, and P.~Hänggi.
\newblock Rate description of fokker–planck processes with time-periodic
  parameters.
\newblock {\em Chemical Physics}, 370(1–3):277 -- 289, 2010.
\newblock Dynamics of molecular systems: From quantum to classical.

\bibitem{kmx}
D.~{Kinderlehrer}, L.~{Monsaingeon}, and X.~{Xu}.
\newblock A {W}asserstein gradient flow approach to {P}oisson-{N}ernst-{P}lanck
  equations.
\newblock {\em ESAIM Control Optim. Calc. Var.}, 2015.
\newblock doi:10.1051/cocv/2015043.

\bibitem{klenke}
A.~Klenke.
\newblock {\em Probability theory}.
\newblock Universitext. Springer, London, 2014.
\newblock A comprehensive course.

\bibitem{laurencot2011}
P.~Lauren{\c{c}}ot and B.-V. Matioc.
\newblock A gradient flow approach to a thin film approximation of the {M}uskat
  problem.
\newblock {\em Calc. Var. Partial Differential Equations}, 47(1-2):319--341,
  2013.

\bibitem{lisini2012}
S.~Lisini, D.~Matthes, and G.~Savar{\'e}.
\newblock Cahn-{H}illiard and thin film equations with nonlinear mobility as
  gradient flows in weighted-{W}asserstein metrics.
\newblock {\em J. Differential Equations}, 253(2):814--850, 2012.

\bibitem{MatthesMcCannSavare}
D.~Matthes, R.~J. McCann, and G.~Savar{\'e}.
\newblock A family of nonlinear fourth order equations of gradient flow type.
\newblock {\em Comm. Partial Differential Equations}, 34(10-12):1352--1397,
  2009.

\bibitem{mccann1997}
R.~J. McCann.
\newblock A convexity principle for interacting gases.
\newblock {\em Adv. Math.}, 128(1):153--179, 1997.

\bibitem{otto2001}
F.~Otto.
\newblock The geometry of dissipative evolution equations: the porous medium
  equation.
\newblock {\em Comm. Partial Differential Equations}, 26(1-2):101--174, 2001.

\bibitem{rms}
R.~Rossi, A.~Mielke, and G.~Savar{\'e}.
\newblock A metric approach to a class of doubly nonlinear evolution equations
  and applications.
\newblock {\em Ann. Sc. Norm. Super. Pisa Cl. Sci. (5)}, 7(1):97--169, 2008.

\bibitem{RossiSavare}
R.~Rossi and G.~Savar{\'e}.
\newblock Tightness, integral equicontinuity and compactness for evolution
  problems in {B}anach spaces.
\newblock {\em Ann. Sc. Norm. Super. Pisa Cl. Sci. (5)}, 2(2):395--431, 2003.

\bibitem{villani}
C.~Villani.
\newblock {\em Topics in optimal transportation}, volume~58 of {\em Graduate
  Studies in Mathematics}.
\newblock American Mathematical Society, Providence, 2003.

\bibitem{zinsl2014}
J.~Zinsl and D.~Matthes.
\newblock Exponential convergence to equilibrium in a coupled gradient flow
  system modeling chemotaxis.
\newblock {\em Anal. PDE}, 8(2):425--466, 2015.

\end{thebibliography}

\end{document}